\DeclareMathOperator{\Sympl}{Sp}
\newtheorem{theorem}[equation]{Theorem}
\newtheorem{corollary}[equation]{Corollary}
\newtheorem{definition}[equation]{Definition}
\newtheorem{lemma}[equation]{Lemma}
\newtheorem{proposition}[equation]{Proposition}
\newtheorem{theoremplain}[equation]{Theorem}
\newtheorem{remarkplain}[equation]{Remark}
\newtheorem{exampleplain}[equation]{Example}
\newcommand{\qed}{\hfill $\square$ \medskip}
\newenvironment{proof}[1][Proof]{\noindent\textbf{#1.} }{\qed}
\newcommand\exact[3]{1\rightarrow #1\rightarrow #2\rightarrow #3\rightarrow1}
\newcommand{\Aut}{\text{Aut}}
\newcommand{\diag}{\text{diag}}
\newcommand{\gr}{\text{gr}}
\newcommand{\Out}{\text{Out}}
\newcommand{\Int}{\text{Int}}
\renewcommand{\int}{\text{int}}
\newcommand{\Hom}{\text{Hom}}
\newcommand{\Id}{\text{Id}}
\newcommand{\Homadm}{\text{Hom}_{\text{adm}}}
\newcommand{\Ind}{\text{Ind}}
\newcommand{\End}{\text{End}}
\newcommand{\Ad}{\text{Ad}}
\newcommand{\Lie}{\text{Lie}}
\newcommand{\Stab}{\text{Stab}}
\newcommand{\Norm}{\text{Norm}}
\newcommand{\Cent}{\text{Cent}}
\newcommand{\X}{\mathcal X}
\newcommand{\caZ}{\mathcal Z}
\newcommand{\I}{\mathcal I}
\renewcommand{\O}{\mathcal O}
\newcommand{\caP}{\mathcal P}
\newcommand{\caH}{\mathcal H}
\newcommand{\Preg}{P_{\text{reg}}}
\newcommand{\chPreg}{\ch P_{\text{reg}}}
\newcommand{\caC}{\mathcal C}
\newcommand{\tX}{\widetilde{\mathcal X}}
\newcommand{\F}{\mathbb F}
\newcommand{\R}{\mathbb R}
\newcommand{\C}{\mathbb C}
\newcommand{\Z}{\mathbb Z}
\newcommand{\Ztwo}{\mathbb Z/2\Z}
\newcommand{\Q}{\mathbb Q}
\renewcommand{\F}{\mathbb F}
\newcommand{\caD}{\mathcal D}
\newcommand{\caL}{\mathcal L}
\newcommand{\caB}{\mathcal B}
\newcommand{\caO}{\mathcal O}
\newcommand{\G}{G}
\newcommand{\GR}{G(\R)}
\newcommand{\HR}{H(\R)}
\newcommand{\GC}{G(\C)}
\newcommand{\HC}{H(\C)}
\renewcommand{\H}{\mathbb H}
\newcommand{\h}{\mathfrak h}
\newcommand{\tW}{\widetilde W}
\newcommand{\tw}{\tilde w}
\newcommand{\ch}[1]{#1^\vee}
\newcommand{\chG}{\ch{G}}
\renewcommand{\sec}[1]{\section{#1}
\renewcommand{\theequation}{\thesection.\arabic{equation}}
  \setcounter{equation}{0}}
\newcommand{\subsec}[1]{\subsection{#1}
\renewcommand{\theequation}{\thesection.\arabic{equation}}}
\newcommand{\g}{\mathfrak g}
\newcommand{\GGamma}{G^\Gamma}
\newcommand{\chGGamma}{G^{\vee\Gamma}}
\newcommand{\weil}{W_\R}
\newcommand\inv{^{-1}}
\newcommand\bs{\backslash}
\newcommand\wt{\widetilde}
\begin{document}
\title{Algorithms for Representation Theory of Real Reductive Groups
}
\author{Jeffrey Adams and Fokko du Cloux}
\maketitle
{\renewcommand{\thefootnote}{}
\footnote{2000 Mathematics Subject Classification: Primary 22E50, Secondary 05E99}
\footnote{This work was supported in part  National Science
Foundation Grants \#DMS-0554278 and \#DMS-0532393.}}

\noindent{\bf\Large Introduction}

The irreducible admissible representations of a real reductive group
such as $GL(n,\R)$ have been
classified by work of Langlands, Knapp, Zuckerman and Vogan.  This
classification is somewhat involved and requires a substantial number
of prerequisites. See \cite{overview} for a reasonably accessible
treatment. It is fair to say that it is difficult for a non-expert to
understand any non-trivial case, not to mention a group such as $E_8$.

The purpose of these notes is to describe an algorithm to compute the
irreducible admissible representations of a real reductive group.
The algorithm has been implemented on a computer by the second
author.  This work is part of the Atlas of Lie Groups and
Representations project.  An early version of the software (Version
0.3 as of July 2008), and other documentation and information, may be
found on the web page of the Atlas project, {\tt www.liegroups.org}.

Here is some more detail on what the algorithm and the software do:

\begin{enumerate}
\item Allow the user to define
\begin{enumerate}
\item A complex reductive group $G$,
\item An inner class of real forms of $G$,
\item A particular real form $\GR$ of $G$.
\end{enumerate}
\item Enumerate the Cartan subgroups of $\GR$, and describe them as
  real tori,
\item For any Cartan subgroup $H(\R)$  compute $W(\GR,H(\R))$ (the ``real'' Weyl group),
\item Compute a set $\X$ parametrizing the set $K\bs G/B$ where $B$ is a
  Borel subgroup and $K$ is the complexification of a  maximal compact subgroup of $\GR$,
\item Compute a set $\caZ$ parametrizing the irreducible
  representations of $\GR$ with regular integral infinitesimal
  character,
\item Compute the cross action and Cayley transforms on $\X$ and $\caZ$,
\item Compute Kazhdan-Lusztig polynomials.
\end{enumerate}

In fact the proper setting for all of the preceding computations is
not a single real group $\GR$, but an entire inner class of real
forms, as described in Sections \ref{s:groups}--\ref{s:basic}.

The approach used in these notes most closely follows
\cite{bowdoin}.
 This reference has the advantage over \cite{abv},
which later supplanted it, in that it focuses on the case of regular
integral infinitesimal character, and avoids some 
complications arising from the general case. There are a few changes
in terminology from these references which are discussed in the
remarks. We also make extensive use of  \cite{av1}.
This work has some overlap with, and was influenced by, that of 
Richardson and Springer \cite{richardson_springer}.

A number or examples illustrating the algorithm and the software may
be found in 
{\it Guide to the  Atlas Software: Computational Representation  Theory of Real Reductive Groups}
\cite{adams_snowbird}. The reader is encouraged 
to use the software, and consult \cite{adams_snowbird},  while reading
this paper. An earlier version of this paper by the
second author is available on the Atlas web site \cite{combinatorics}.

The authors thank David Vogan for his numerous contributions to this
project, and Marc van Leeuwen, Patrick Polo and the referee 
for very helpful suggestions.

\medskip
Fokko du Cloux died  of ALS in November 2007. Fokko played a
major role in development of the algorithm described in this
paper, in addition to writing the atlas software. This work began in
2004 and was mostly complete by late 2006. Fokko was an active
participant in the writing of this paper, which was originally
submitted shortly after his death.

\bigskip

\noindent{\bf Index of Notation}

\medskip\noindent
$\Aut(G),\Int(G),\Out(G)$: the (holomorphic) automorphisms of $G$, the
inner automorphisms, and the outer automorphisms
$\Aut(G)/\Int(G)$, respectively (Section \ref{s:automorphisms}). 

\medskip\noindent
$\caB$: set of Borel subgroups of $G$ (Section \ref{s:kgb}), in
bijection with $G/B$.

\medskip\noindent 
$(B,H,\{X_\alpha\})$: splitting  or pinning data for $G$ (Section
\ref{s:automorphisms}). Defines a splitting $\Out(G)\rightarrow
\Aut(G)$ (cf. \eqref{e:exact}).

\medskip\noindent
$c^\alpha,c_\alpha$: noncompact imaginary and real Cayley transforms,
respectively (Definitions \ref{d:cayley} and \ref{d:cayley2}).

\medskip\noindent 
$D=(X,\Delta,\ch X,\ch \Delta)$: root datum defining a complex
connected reductive group \eqref{e:rootdatum}.

\medskip\noindent 
$D(G,H)$: root datum associated to a complex connected reductive group
$G$ and a Cartan subgroup $H$ \eqref{e:DGH}.

\medskip\noindent 
$D_b=(X,\Pi,\ch X,\ch\Pi), D_b(G,B,H)$: abstract {\it based} root datum, and 
the based root datum associated to $G$, $H$ and a Borel subgroup $B$
\eqref{e:DbGBH}.

\medskip\noindent 
$\ch D,\ch D_b,\ch G$: dual (based) root datum and dual group
associated to $D,D_b$ and $G$ respectively (end of Section \ref{s:groups}).

\medskip\noindent
$(G,\gamma)$: basic data consisting of a complex connected reductive
group $G$ and an involution $\gamma\in\Out(G)$ (Section \ref{s:basic}).

\medskip\noindent
$\GGamma$: given basic data $(G,\gamma)$, the 
extended group containing $G$; $\GGamma=G\rtimes\Gamma$ 
where $\Gamma=\{1,\sigma\}$ and $\sigma$ acts by a distinguished
involution mapping to $\gamma$
(Definition \ref{d:extended}).

\medskip\noindent
$(\ch G,\ch \gamma)$:  basic data dual to $(G,\gamma)$ (Section \ref{s:basic}).

\medskip\noindent
$H'_{-\tau}, H_{-\tau}, A_\tau, T_\tau$: various subgroups of the Cartan
subgroup $H$,
depending on a twisted involution $\tau$ (\eqref{e:H} and Remark
\ref{r:H2}). In particular $A_\tau\subset H_\tau\subset H'_\tau$,
$A_\tau$ and $T_\tau$ are connected complex tori, and
$H=T_\tau A_\tau$. 

\medskip\noindent
$\I(G,\gamma)$: set of strong involutions in $\GGamma$ 
(Definition \ref{d:stronginvolution}).

\medskip\noindent
$I$: $\{\xi_i\,|\, i\in I\}$ is a set of representatives of
the set of strong real forms, i.e.
$\I(G,\gamma)/G$ \eqref{e:I1}.

\medskip\noindent
$\I_i$: for $i\in I$,  strong involutions conjugate to $\xi_i$
\eqref{e:I}.

\medskip\noindent
$\I_W$: twisted involutions in $W$  $=\{\tau\in W^\Gamma\bs
W\,|\,\tau^2=1\}$ \eqref{e:I_W} and \eqref{e:concrete}.

\medskip\noindent
$\caL,\caL_c$: equivalance classes of (complete) two-sided L-data for
$(G,\gamma)$ \eqref{e:L}. 

\medskip\noindent
$N,N^\Gamma:$ normalizer of $H$ in $G$ and $\GGamma$, respectively 
(\eqref{e:x1}).

\medskip\noindent
$\tilde p,p$: maps from $\tX$ and $\X$ to $\I_W$, respectively
(\eqref{e:p}).

\medskip\noindent
$P,\ch P,P_{reg},\ch P_{reg}, P(G,H), \ch P(G,H)$: weight and coweight
lattices and their regular elements (Section \ref{s:basic}).

\medskip\noindent
$\caP,\caP_c$: equivalence classes of  (complete) one-sided L-data for $(G,\gamma)$)
\eqref{e:caPchP};
$\caP=(\I\times\caB)/G$ (\eqref{e:caPIBG}).
Also $\ch\caP,\ch\caP_c$ on the dual side. 

\medskip\noindent
$\caP[\xi]$: set of equivalence classes of one-sided L-data  $(\xi',B)$
such that $\xi$ is conjugate to $\xi$
(\eqref{e:caPxi}). In bijection with $K_\xi\bs G/B$
(\eqref{e:capxi}).

\medskip\noindent
$S=(\xi,B)$: one-sided L-datum for $(G,\gamma)$, consisting of 
of a strong involution $\xi$ and 
a Borel subgroup $B$ of $G$ (Definition \ref{d:onesidedldatum}).
Also $\ch S=(\eta,\ch B_1)$ is a one-sided L-datum for $(\ch G,\ch\gamma)$.

\medskip\noindent
$\ch S_c=(\eta,\ch B_1,\lambda)$: complete one-sided L-datum for $(\chG,\ch\gamma)$, consisting of
an L-datum $(\eta,\ch B_1)$ 
and $\lambda\in\ch\h$ satisfying
$\exp(2\pi i\lambda)=\eta^2$
(Definition \ref{d:onesidedcompleteldatum}).

\medskip\noindent
${\bf S}=(S,\ch S)$: two-sided L-datum, where $S,\ch S$
are L-data for $(G,\gamma)$ and $(\ch G,\ch\gamma)$ respectively,
satisfying an additional compatibility condition
(Definition \ref{d:L}).

\medskip\noindent
${\bf S_c}=(S,\ch S_c)$: complete two-sided L-datum, where 
$S$ is an L-datum for $(G,\gamma)$, $\ch S_c$ is a complete L-datum for $(\ch
G,\ch\gamma)$,
satisfying an additional compatibility condition
(Definition \ref{d:L}).

\medskip\noindent
$W,W^\Gamma$: $W=N/H$ is the Weyl group, and $W=N^\Gamma/H$ is the
extended Weyl group 
\eqref{e:WGamma}.

\medskip\noindent
$\tW$: the Tits group; a subgroup of $G$
mapping onto $W$
(Definition \ref{d:tits}).

\medskip\noindent
$\X=\X(G,\gamma)$: the one-sided parameter space 
$=\tX/H=\{\xi\in\Norm_{\GGamma\bs G}(H)\,|\, \xi^2\in
Z(G)\}/H$
(Definition \ref{d:X}).

\medskip\noindent
$\X^r=\X^r(G,\gamma)$: the {\it reduced} parameter space; a finite
subset of $\X$ (Definition \ref{d:reduced}).

\medskip\noindent
$\tX$: strong involutions normalizing $H$
$=\{\xi\in\Norm_{\GGamma\bs G}(H)\,|\, \xi^2\in Z(G)\}$
\eqref{e:Xt}.

\medskip\noindent
$\X_\tau,\tX_\tau$: for $\tau$ a twisted involution, the fiber of the map
$p:\X\rightarrow \I_W$ 
(resp. $\tilde p:\tX\rightarrow \I_W$)
over $\tau$
(beginning of Section \ref{s:fibers}).

\medskip\noindent
$\X[x]$: for $x\in\X$, subset of $\X$ consisting of elements conjugate to $x$
\eqref{e:Xx}.

\medskip\noindent
$\X(z)$: for $z\in Z(G)$ the  elements of $\X$ satisfying $x^2=z$
(\eqref{e:Xz}).

\medskip\noindent
$\X_\tau(z)$: for  $z\in Z(G),\tau\in\I_W$,  $\X_\tau(z)=\X_\tau\cap \X(z)$; 
the  elements $x\in\X$ satisfying $p(x)=\tau$ and $x^2=z$
(beginning of Section \ref{s:fibers}).

\medskip\noindent
$\caZ=\caZ(G,\gamma)$: the two-sided parameter space, contained in
$\X(G,\gamma)\times\X(\ch G,\ch\gamma)$ (Definition \ref{d:Z}).

\medskip\noindent
$\delta$: distinguished element of $\X$ or $\I_W$ (Definition
\ref{d:extended} and before Lemma \ref{l:pX}).

\medskip\noindent
$\Delta_i,\Delta_r,\Delta_{cx},W_i,W_{i,\tau}, W_r,W_{r,\tau}$, etc.: 
various root systems and Weyl groups (beginning of Section
\ref{s:waction}). 

\medskip\noindent 
$\ch\gamma$: element of   $\Out(\ch G)$ corresponding to
$\gamma\in\Out(G)$ (Defintion \ref{d:chtau}).

\medskip\noindent
$\theta_\xi,K_\xi$: for $\xi\in\tX$, $\theta_\xi=\int(\xi)$, an involution of $G$, and
$K_\xi=G^{\theta_\xi}$
(Definition \ref{d:stronginvolution}).
Thus $\theta_\xi$ is the Cartan involution of a real form of $G$, and
$K_\xi$ is the corresponding complexified maximal compact subgroup
(Section \ref{s:involutions}). 

\medskip\noindent
$\theta_i,K_i$: For $i\in I$, Cartan involution and complexified maximal
compact subgroup associated to $\xi_i$ \eqref{e:I}.

\medskip\noindent
$\xi:$ strong involution (with respect to $(G,\gamma)$); 
$\xi\in\GGamma\bs G$ satisfies $\xi^2\in Z(G)$ (Definition
\ref{d:stronginvolution}).
Then $\theta_\xi=\int(\xi)$ is an involution of $G$.

\medskip\noindent
$(\xi,\pi)$: Harish-Chandra module for a strong involution; 
a strong involution $\xi$ and a $(\g,K_\xi)$-module $\pi$.  
A {\it Harish-Chandra module for a strong real form} is a
$G$-orbit of such pairs (Section \ref{s:representations}).

\medskip\noindent
$\Pi(G(\R),\lambda)$: set of equivalence classes of irreducible
admissible representations of $G(\R)$ with infinitesimal character
$\lambda$
(Definition \ref{d:infchar}).

\medskip\noindent
$\Pi(G(\R),\Lambda)$: for $\Lambda\subset P_{reg}$,
$\coprod_{\lambda\in\Lambda}\Pi(G(\R),\lambda)$
(before Theorem \ref{t:ldata}).

\medskip\noindent
$\Pi(G,\xi)$: equivalence classes of $(\g,K_\xi)$-modules with
regular integral infinitesimal character (Definition \ref{d:pig}).

\medskip\noindent
$\Pi(G,\gamma)$: representations of strong real forms of $G$ with
regular integral infinitesimal character; 
$=\{(\xi,\pi)\,|\,\xi \in \I,\pi\in\Pi(G,\xi)\}/G$
(Definition \ref{d:pig}).

\medskip\noindent
$\Pi(G,\gamma,\Lambda)$: for $\Lambda\subset P_{reg}$, the  subset of $\Pi(G,\gamma)$ consisting of
representations with infinitesimal character  contained in
$\Lambda$. 

\medskip\noindent
$\Pi_\phi(G,\xi)$: L-packet of representations of strong real form
$\xi$ associated to the L-homomorphism $\phi$
(after \ref{e:PiG}).

\medskip\noindent
$\Pi_\phi(G,\gamma)$: {\it large} L-packet; union of L-packets for all strong real forms
\eqref{e:large}. 

\medskip\noindent
$\Pi_{\ch S_c}(G,\gamma)$: large L-packet $=\Pi_\phi(G,\gamma)$ where 
$\phi$ is defined by \eqref{e:phizj} (cf.~\eqref{e:large2}).

\sec{Overview}
\label{s:overview}

The primary aim of this paper is to distill a well-known but difficult
theory into an algorithm which can be implemented on a computer.
While the resulting algorithm is self-contained and comparatively
elementary, even understanding the algorithm itself requires a
fairly deep knowledge of the mathematics.  In this section we give a
high level overview of the algorithm, before going into more detail in
the remainder of the paper.
An outline of the contents of the paper
appears at the end of this Overview (Section \ref{s:outline}).

We assume the reader is familiar with the theory of admissible
representations of real reductive groups. A good introduction  is
Knapp's book \cite{overview}.

In this section we write  $\GC$ for a complex reductive group, with real
points $\GR$.
We will make various simplifying assumptions in the course of this
overview; this is for ease of exposition, and the general statements
will be found in the body of the paper. See the end of this section
for a discussion of the issues involved.

Let $\g$ be the Lie algebra of $\GC$.
Fix a Cartan involution $\theta$ of $\GC$ corresponding to $\GR$.
Thus $K=\GR^\theta$ is a maximal compact subgroup of $\GR$. The
basic goal is to parametrize the irreducible admissible
representations of $\GR$, or equivalently  the irreducible admissible
$(\g,K)$-modules.  This is an infinite (typically uncountable) set.

Suppose $\HC$ is a Cartan subgroup of $\GC$, and let $\h=\Lie(\HC)$.
By the Harish-Chandra homomorphism, associated to $\lambda\in\h^*$ is
an {\it infinitesimal character} which we also denote $\lambda$. 
We say $\lambda$ is {\it regular} (resp. {\it integral}) if 
$\langle\lambda,\ch\alpha\rangle\ne 0$  (resp. $\in\Z$) 
for all roots $\alpha$ of $\h$ in $\g$.

Since we will be working with different Cartan subgroups
simultaneously, it is convenient to fix one, denoted $H_a(\C)$, the
{\it abstract} Cartan subgroup. Then we fix once and for all 
$\lambda\in\h_a^*$.
The first basic reduction is to consider 
representations with infinitesimal character $\lambda$. 

\begin{definition}
\label{d:infchar}
Fix $\lambda\in \h_a^*$, and let
$\Pi(\GR,\lambda)$ be the set of irreducible admissible
representations of $\GR$ with infinitesimal character $\lambda$.  
\end{definition}

By a result of Harish-Chandra this is a finite set.  The main result
of this paper is an algorithm to compute $\Pi(\GR,\lambda)$ when
$\lambda$ is regular and integral.

There are a number of approaches to classifying $\Pi(\GR,\lambda)$. 
We focus on three of them, which are 
intertwined with each other, and each of which plays a role in the algorithm.

\medskip

\subsection{The Langlands classification}
\label{s:langlands}
Many representations of reductive groups can be constructed using
characters of Cartan subgroups.
For example if $G$
is a reductive group over a finite field  these are the
representations $R_T(\theta)$ of Deligne and Lusztig \cite{deligne_lusztig}.
For generic $\theta$ $R_T(\theta)$ is irreducible.

For another example, suppose $\F$ is a finite or local field, $G=G(\F)$ is split, and
$B=HN$ is a Borel subgroup of $G$. If $\chi$ is a character of $H$
then $\Ind_B^G(\chi\otimes 1)$ is a minimal principal series
representation of $G$; again for generic $\chi$ this is
irreducible.

For a final example assume $\GC$ is semisimple and simply connected, and $G=\GR$
is a real form of $\GC$, containing a compact Cartan subgroup $T$. 
Suppose $\chi$ is a character of $T$ such that $\langle
d\chi,\ch\alpha\rangle\ne0$ for all roots $\alpha$.
Associated to 
$\chi$ is the discrete series representation of $G$  with Harish-Chandra
parameter $d\chi$, and every discrete series representation of $G$
is of this form.

The Langlands classification for $\GR$ is built out of the second two cases.
For now we  assume  that $\GC$ is acceptable, 
i.e. $\rho$ (one half the sum of a set of positive roots) exponentiates to
$H_a(\C)$.  This holds for example if $\GC$ is semisimple and simply
connected. 

Consider the set of pairs $(H(\R),\chi)$ where $H(\R)$ is a
Cartan subgroup of $\GR$, $\chi$ is a character of $H(\R)$, and  $d\chi$ is
$\GC$-conjugate to $\lambda$. The group $\GR$ acts on these pairs by 
conjugation. We define {\it character data} as follows:
\begin{equation}
\caC(\GR,\lambda)=
\{(\HR,\chi)\,|\, d\chi\text{ is $\GC$-conjugate to }\lambda\}/\GR.
\end{equation}

\begin{proposition}[Langlands Classification]
\label{p:LC}
Assume $G(\C)$ is acceptable and $\lambda$ is regular. 
There is a natural bijection 
\begin{equation}
\label{e:Hchi}
\Pi(\GR,\lambda)\overset{1-1}\longleftrightarrow \caC(\GR,\lambda).
\end{equation}
\end{proposition}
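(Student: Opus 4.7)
My plan is to derive this from the classical Langlands classification by repackaging the standard triple (cuspidal parabolic, discrete series data, continuous parameter) into the single datum of a character of a Cartan subgroup, and then to quote \cite{overview} for the bijection itself. The construction of a map $\caC(\GR,\lambda)\to\Pi(\GR,\lambda)$ is explicit; bijectivity is the content of the classical theorem.

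Given a representative $(\HR,\chi)$, I decompose $\HR=TA$ into its maximal compact part $T$ and the identity component of its vector part $A$, and correspondingly split the roots of $\HC$ in $\g$ into imaginary, real and complex roots. A choice of positive imaginary roots, together with a positive system on the real roots compatible with the real part of $d\chi|_{\mathfrak{a}}$, determines a cuspidal parabolic $P=MAN$ of $\GR$ with $T\subset M$ a compact Cartan. Since $\lambda$ is regular, $d\chi|_{\mathfrak{t}}$ is a regular Harish-Chandra parameter, and so specifies a discrete series representation $\sigma$ of $M$; the acceptability of $\GC$ is precisely the hypothesis that allows the necessary $\rho$-shift to be exponentiated, so that $\sigma$ is naturally attached to $\chi|_T$ rather than merely to its differential. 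Writing $\chi|_A=e^{\nu}$, I form the standard module $\Ind_P^{\GR}(\sigma\otimes e^{\nu}\otimes 1)$ and assign to $(\HR,\chi)$ its unique Langlands quotient (after conjugating so that $\mathrm{Re}(\nu)$ lies in the positive chamber). A direct computation via the Harish-Chandra homomorphism shows that this irreducible representation has infinitesimal character $\lambda$.

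Equivariance under $\GR$-conjugation shows that the assignment descends to $\caC(\GR,\lambda)$. That the resulting map is a bijection onto $\Pi(\GR,\lambda)$ is the classical Langlands classification: surjectivity is the exhaustion theorem, which proceeds via Casselman's subrepresentation theorem and the matching of leading exponents of asymptotic expansions; injectivity is the disjointness theorem, controlled by the standard intertwining operators between induced modules with conjugate inducing data. Regularity of $\lambda$ is the decisive simplification at every stage: it rules out limits of discrete series on $M$, ensures the standard module has a single well-defined Langlands quotient, and eliminates accidental equivalences between quotients built from non-conjugate cuspidal data.

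The principal obstacle is the exhaustion statement, which is the deep content of Langlands' theorem and is not to be reproved here; the reference \cite{overview} already cited in the introduction is exactly the source I would invoke. The remainder of the argument is the translation between the triple $(P,\sigma,\nu)$ of classical Langlands parameters and the pair $(\HR,\chi)$ of character data, which is bookkeeping given acceptability and regularity.
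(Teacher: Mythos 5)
Your proposal is correct and matches the paper's treatment: the paper gives no independent argument but simply cites the classical result (in the form of \cite[Theorem 8.29]{av1}), noting exactly as you do that acceptability removes the need for the $\rho$-cover of $H$ and that regularity pins down the remaining choices. Your unpacking of the dictionary between $(\HR,\chi)$ and the classical triple $(P,\sigma,\nu)$, with the deep steps (exhaustion and disjointness) deferred to the standard references, is precisely what that citation encapsulates.
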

There are many versions of the Langlands classification, for example see
\cite{overview} or \cite{green}. This
restatement of (a special case of) the 
classification is taken from \cite[Theorem 8.29]{av1}. 
Since $\GC$ is acceptable we do not need the
$\rho$-cover of $H$, and since $\lambda$ is regular  we
can take $\Psi$ to be the set of positive,  real integral roots defined by
$\lambda$ (notation as in \cite{av1}).

Thus one version of our algorithm would be to replace the right hand
side of \eqref{e:Hchi} by a computable combinatorial
object.
Here is an idea of what this involves.

First we fix a set $H_1(\R),\dots, H_n(\R)$ of representatives of the
conjugacy classes of Cartan subgroups. 
An algorithm for computing this set is given by 
Kostant\cite{kostant_cartan}, \cite{kostant_cartan_2}, and 
independently by  Borel (unpublished) and Sugiura \cite{sugiura_cartans}.

Now fix $i$ and let $H(\R)=H_i(\R)$.
Let $W(\GR,H(\R))$ be the ``real'' Weyl group
$\Norm_{\GR}(H(\R))/H(\R)$. This is a subgroup of the  Weyl group
$W(\GC,H(\C))=W(\g,\h)$.
Unlike $W(\g,\h)$, $W(\GR,\HR)$ depends on $i$,
is not
necessarily the Weyl group of a root system,
and can be somewhat difficult to compute. An algorithm is given in 
\cite{knapp_weyl_group}; also see \cite[Proposition 4.16]{ic4}.

\begin{lemma}[\cite{green}, Theorem 2.2.4]
\label{l:number}
In the setting of Proposition \ref{p:LC} there is a bijection 

\begin{equation}
\label{e:WWRchi}
\Pi(\GR,\lambda)\overset{1-1}\longleftrightarrow
\coprod_i^n (W/W(\GR,H_i(\R))\times [H_i(\R)/H_i(\R)^0]\sphat.
\end{equation}
In particular
\begin{equation}
\label{e:number}
|\Pi(\GR,\lambda)|=\sum_{i=1}^n|W/W(\GR,H_i(\R))||H_i(\R)/H_i(\R)^0|.
\end{equation}
\end{lemma}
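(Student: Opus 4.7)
The plan is to translate the Langlands classification of Proposition~\ref{p:LC} directly into the stated counting formula. By \eqref{e:Hchi}, $\Pi(\GR,\lambda)$ is in bijection with the set $\caC(\GR,\lambda)$ of $\GR$-conjugacy classes of pairs $(\HR,\chi)$ with $d\chi$ in the $\GC$-orbit of $\lambda$. First I would stratify $\caC(\GR,\lambda)$ according to the $\GR$-conjugacy class of $\HR$: using the representatives $H_1(\R),\dots,H_n(\R)$, every class has a representative with $\HR=H_i(\R)$ for a unique $i$, and the residual action on $\chi$ comes from $\Norm_{\GR}(H_i(\R))$. Since $H_i(\R)$ acts trivially on its own characters, this descends to a $W(\GR,H_i(\R))$-action, yielding
\[
\caC(\GR,\lambda)=\coprod_{i=1}^n\{\chi\in\widehat{H_i(\R)}\,|\,d\chi\in W\cdot\lambda\}/W(\GR,H_i(\R)).
\]

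Next I would analyze the differentiation map $d\colon\widehat{H_i(\R)}\to\h_i^*$, whose kernel is exactly $[H_i(\R)/H_i(\R)^0]\sphat$. It is equivariant for the inclusion $W(\GR,H_i(\R))\hookrightarrow W$, and its image on the subset above lies in the Weyl orbit $W\cdot\lambda$, of cardinality $|W|$ since $\lambda$ is regular. Modding out by $W(\GR,H_i(\R))$ therefore produces a fibration of the $i$-th summand over the orbit space $W\cdot\lambda/W(\GR,H_i(\R))$, a set in bijection with $W/W(\GR,H_i(\R))$, with fibers that, when nonempty, are torsors under $[H_i(\R)/H_i(\R)^0]\sphat$.

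The main remaining step, and the main obstacle, is to show every fiber $d^{-1}(\mu)$ for $\mu\in W\cdot\lambda$ is nonempty. Since $\GC$ is acceptable, $\rho$ exponentiates to a character of the abstract Cartan $H_a(\C)$, and then the regular integral element $\lambda$ does as well (because $\lambda-\rho$ lies in the weight lattice). Conjugating this character to $H_i(\C)$ by a $\GC$-element intertwining the abstract Cartan with $H_i(\C)$, and restricting to the real form $H_i(\R)$, produces a character of $H_i(\R)$ with the prescribed differential; different choices of Weyl representative realize every $\mu\in W\cdot\lambda$. Assembling the pieces gives exactly $|W/W(\GR,H_i(\R))|\cdot|[H_i(\R)/H_i(\R)^0]\sphat|$ elements in the $i$-th summand, yielding both the bijection \eqref{e:WWRchi} and the cardinality formula \eqref{e:number}.
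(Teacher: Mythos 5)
The paper offers no proof of this lemma; it simply cites \cite[Theorem 2.2.4]{green}, so there is no in-paper argument to compare yours against. Your overall strategy --- stratify $\caC(\GR,\lambda)$ by the $\GR$-conjugacy class of the Cartan subgroup, let the residual $W(\GR,H_i(\R))$-action act on characters, and fiber each stratum over the Weyl orbit $W\cdot\lambda$ via differentiation, using regularity of $\lambda$ to see that stabilizers of points of $W\cdot\lambda$ in $W(\GR,H_i(\R))$ are trivial and hence that the fibers survive intact in the quotient --- is the standard one and is essentially Vogan's; those steps are fine.

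The gap sits exactly where you locate the main obstacle: surjectivity of $d$ onto $W\cdot\lambda$. You argue that acceptability gives $e^\rho$ on $H_a(\C)$ and that $\lambda-\rho$ ``lies in the weight lattice,'' hence $\lambda$ exponentiates. But in this paper ``integral'' means $\lambda\in P$ (cf.~\eqref{e:P}), the lattice of weights integral on all coroots, and $P$ properly contains the character lattice $X^*(H_a(\C))$ unless the derived group of $\GC$ is simply connected; membership of $\lambda-\rho$ in $P$ does not imply that $\lambda-\rho$ exponentiates. The point is not cosmetic: if $\lambda\in P\setminus X^*(H)$ and $H_i(\R)$ has a nontrivial compact factor (most visibly a compact Cartan subgroup, whose characters have differentials lying exactly in the $W$-stable lattice $X^*(H)$), then every fiber $d^{-1}(\mu)$ with $\mu\in W\cdot\lambda$ is empty, the $i$-th stratum contributes nothing, and \eqref{e:number} fails. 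A concrete instance: $\GC=SL(4,\C)/\{\pm I\}$ is acceptable (in $P/R\simeq\Z/4\Z$ one has $\rho\mapsto 2$ while $X^*(H)/R=\{0,2\}$), the real form $SU(2,2)/\{\pm I\}$ has a compact Cartan subgroup, and $\lambda=\rho+\varpi_1$ is regular and integral but maps to $3\notin\{0,2\}$, so it is not in $X^*(H)$. To repair the argument you must either add the hypothesis that $\lambda$ exponentiates to $H_a(\C)$ (equivalently $\lambda\in X^*(H_a(\C))$; then $W$-stability of that lattice makes your construction of a character with differential $\mu$ for every $\mu\in W\cdot\lambda$ go through), or replace characters of $H_i(\R)$ by characters of the $\rho$-cover, as the paper indicates in its list of technical issues. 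With that hypothesis the remainder of your proof is correct.
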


See \cite[Theorem 2.2.4]{green}.

The bijection \eqref{e:WWRchi} depends on a number of choices. In addition to the
choice of the $H_i(\R)$, for each $i$ we have to choose a Borel
subgroup containing $H_i(\C)$, and an embedding of $H_i(\R)/H_i(\R)^0$
in $H_i(\R)$.
Nevertheless this result gives a good idea of what we need to compute:

\begin{enumerate}
\label{en:need}
\item the Cartan subgroups $H_1(\R),\dots, H_n(\R)$; and for each $i$,
\item $W(\GR,H_i(\R))$,
\item $H_i(\R)/H_i(\R)^0$.
\end{enumerate}
In any event it is clear that any parametrization of
$\Pi(\GR,\lambda)$ must (if only implicitly) include a
description of items (1-3).

\subsection{$\caD$-modules}
\label{s:dmodules}
We now describe  the classification of $\Pi(\GR,\lambda)$ in terms of
$\caD$-modules. The basic reference is 
\cite{bbd}, or see \cite{milicic_dmodules} 
for a  good introduction to the subject.

Recall $\theta$ is a Cartan involution of $\GC$
corresponding to $\GR$. Let $K(\C)=\GC^\theta$. Then $K(\C)$ is a
reductive group (possibly disconnected), with real points
$K=\GR^\theta$.
By the relationship between finite dimensional representations of $K$
and $K(\C)$, we may view  admissible $(\g,K)$-modules as 
$(\g,K(\C))$-modules. 

Let $\caB$  be the variety of Borel subgroups of $\GC$,
so $\caB=\GC/B(\C)$ where $B(\C)$ is a fixed Borel subgroup. Then
$K(\C)$ acts on $\caB$ with finitely many orbits.
Let $\caD_\lambda$ be the sheaf of twisted differential operators on
$\caB$ corresponding to $\lambda$. 
We consider  {\it $\caD$-module data}:

\begin{equation}
\caD(\GR,\lambda)=\{(\caO,\tau)\}/K(\C).
\end{equation}
where $\caO$ is a $K(\C)$-orbit on $\caB$, and $\tau$ is an irreducible 
$K(\C)$-equivariant $\caD_\lambda$-module.

Here is how to make $\tau$ more concrete. Fix $x\in\caO$ and let
$K_x(\C)=\Stab_{K(\C)}(x)$. Let $B(\C)$ be the Borel subgroup of
$\GC$ corresponding to $x$, and let $H(\C)$ be a
$\theta$-stable Cartan subgroup contained in $B(\C)$.
Then $\tau$ can
be viewed as a character of $H(\R)$ 
whose differential $d\chi$ is
$\GC$-conjugate to $\lambda$.

\begin{proposition}[\cite{bbd}; \cite{milicic_dmodules}, Theorem 3.9]
\label{t:caD}
  There is a natural bijection
\begin{equation}
\Pi(\GR,\lambda)\overset{1-1}\longleftrightarrow\caD(\GR,\lambda).
\end{equation}
\end{proposition}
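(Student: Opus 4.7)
The plan is to combine the Beilinson–Bernstein localization equivalence with the classification of irreducible $K(\C)$-equivariant $\caD_\lambda$-modules on $\caB$. First, since $\lambda$ is regular, after applying an element of $W$ I may assume $\lambda$ is regular antidominant. In this situation the Beilinson–Bernstein theorem asserts that the localization functor $\caD_\lambda \otimes_{U(\g)_\lambda} (-)$ and the global sections functor $\Gamma(\caB,-)$ are mutually inverse equivalences between the category of quasi-coherent $\caD_\lambda$-modules on $\caB$ and the category of $U(\g)$-modules of infinitesimal character $\lambda$; the two inputs are that $\Gamma$ is exact on $\caD_\lambda$-modules (regularity of $\lambda$) and that $\caB$ is $\caD_\lambda$-affine (antidominance).

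Next I would pass to $K(\C)$-equivariant subcategories. Because $\Gamma$ is $K(\C)$-equivariant, the Beilinson–Bernstein equivalence restricts to an equivalence between $K(\C)$-equivariant $\caD_\lambda$-modules on $\caB$ and $(\g,K(\C))$-modules of infinitesimal character $\lambda$. Using that finite-dimensional representations of $K$ and of $K(\C)$ correspond, one identifies the latter with admissible $(\g,K)$-modules of infinitesimal character $\lambda$, and the equivalence sends irreducibles to irreducibles. In particular it induces a bijection on isomorphism classes of irreducible objects.

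The third step is to parametrize the irreducible $K(\C)$-equivariant $\caD_\lambda$-modules explicitly. Since $K(\C)$ acts on $\caB$ with finitely many orbits, the stratification $\caB = \coprod \caO$ together with the theory of holonomic modules implies every irreducible $K(\C)$-equivariant $\caD_\lambda$-module is the intermediate extension $j_{!*}\tau$ of a unique irreducible $K(\C)$-equivariant $\caD_\lambda$-connection $\tau$ on a single orbit $\caO$. Fixing $x \in \caO$ and a $\theta$-stable Cartan $\HC \subset B(\C)$ corresponding to $x$, one shows (by translating the equivariance condition through the stabilizer $K_x(\C)$) that such $\tau$ correspond to characters $\chi$ of $\HR$ with $d\chi$ $\GC$-conjugate to $\lambda$ — which is precisely the parametrization in the definition of $\caD(\GR,\lambda)$.

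The main obstacle will be the bookkeeping in this last step: matching the $K(\C)$-equivariance together with the $\caD_\lambda$-twist compatibility to characters of $\HR$ (not merely characters of $\HC$, nor abstract characters of the component group of $K_x(\C)$). This requires careful tracking of the $\rho$-shift built into $\caD_\lambda$ and of the interaction between the real form structure on $H$ and the reductive stabilizer $K_x(\C)$; the acceptability hypothesis ensures these shifts can be exponentiated so that the parametrization collapses to the stated one, and this is what ultimately matches the $\caC(\GR,\lambda)$ side of Proposition \ref{p:LC}.
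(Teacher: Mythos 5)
The paper does not prove this Proposition at all: it is quoted from \cite{bbd} and \cite{milicic_dmodules}, and the surrounding text merely indicates how to read the datum $\tau$ as a character of $H(\R)$. Your sketch is the standard localization argument that those references carry out --- the Beilinson--Bernstein equivalence for regular antidominant $\lambda$, its restriction to the $K(\C)$-equivariant category, and the classification of irreducible equivariant modules by intermediate extension of an irreducible equivariant connection on a single orbit --- so in substance you are reconstructing the cited proof rather than diverging from anything in the paper. Two caveats are worth recording. First, replacing $\lambda$ by a regular antidominant $W$-conjugate changes the twisting sheaf $\caD_\lambda$, while $\caD(\GR,\lambda)$ is defined with the given $\lambda$; either fix the antidominant representative from the outset (harmless, since the infinitesimal character and the orbit/connection data depend only on the $W$-orbit) or invoke intertwining functors to compare the two equivariant categories --- this should be said explicitly. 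Second, the step you flag as the main obstacle --- matching irreducible $K(\C)$-equivariant $\caD_\lambda$-connections on $\caO$, via the component group of the isotropy group $K_x(\C)$ and the $\rho$-shift in the twist, with characters $\chi$ of $H(\R)$ whose differential is conjugate to $\lambda$ --- is genuinely where the content of the parametrization lies, and you have named it rather than carried it out; the acceptability hypothesis in force in this part of the paper is exactly what lets the $\rho$-shift exponentiate so that honest characters of $H(\R)$ suffice. At the level of detail at which the paper itself operates, your argument is correct and is the intended one.
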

It is reasonable to look for an algorithm to compute the right hand
side.

Fix an orbit $\caO$ of $K(\C)$ on $\caB$.
By the Proposition, for every
$\caD_\lambda$-module $\tau$ on $\caO$ we obtain an irreducible
representation. Varying $\tau$ we obtain a finite set of representations
associated to $\caO$.
For (not very good) reasons which will become clear in the next
section (also see \cite[Section 8]{ic4}) we refer to 
this set as an 
{\it $R$-packet}, and denote it 
$\Pi_R(\GR,\caO,\lambda)$.
Thus $\Pi(\GR,\lambda)$ is a disjoint union of R-packets:
\begin{equation}
\label{e:rpackets}
\Pi(\GR,\lambda)=\coprod_{K(\C)\bs \caB} \Pi_R(\GR,\caO,\lambda).
\end{equation}

\subsection{The Langlands classification using L-groups}
\label{s:langlands2}
We now consider a version of the Langlands classification in terms
of L-groups.  Given our group $\GC$, with real points $\GR$, let
$\chGGamma$ be the L-group of $\GR$ \cite{borel_corvallis}.  Thus
$\chGGamma=\ch G(\C)\rtimes \Gamma$, where $\ch G(\C)$ is the dual group
of $\GC$, $\Gamma=\text{Gal}(\C/\R)$, and $\chGGamma$ is a certain
semidirect product. For example if $\GR$ is split this is a
direct product.

Let $W_\R$ be the Weil group of $\R$,
i.e. $W_\R=\langle\C^\times,j\rangle$ where $jzj\inv=\overline z$ and
$j^2=-1$. We consider the space 
$\Homadm(W_\R,\chGGamma)$
of {\it admissible homomorphisms} of
$W_\R$ into $\chGGamma$. These are the continuous homomorphisms such that
$\phi(\C^\times)$ consists of semisimple elements of $\ch G(\C)$, and
$\phi(j)\not\in \ch G(\C)$.

The version of the Langlands classification in terms of L-groups
says that associated to an
admissible homomorphism $\phi$ is a finite set of irreducible
admissible representations $\GR$, called an 
{\it L-packet}.
The L-packet of $\phi$ only depends on the $\ch
G(\C)$-orbit of $\phi$, and we write 
$\Pi_L(\GR,\ch\caO)$ for the L-packet associated to such an orbit
$\ch\caO$.
An L-packet may be empty (if $G(\R)$ is not quasi-split); 
non-empty L-packets associated to two orbits
are equal if only if the orbits are equal.
Finally the admissible dual of $\GR$ is the disjoint union of L-packets.

The representations in $\Pi_L(\GR,\ch\caO)$ all have the same infinitesimal
character, which is determined by $\ch\caO$ (see Section \ref{s:ldata}).
Let  $\Homadm(W_\R,\chGGamma,\lambda)$ be 
the admissible homomorphisms  for which $\Pi_L(\GR,\phi)$ has
infinitesimal character $\lambda$.
Thus (compare \eqref{e:rpackets}) $\Pi(\GR,\lambda)$ is a disjoint union of
L-packets:  
\begin{equation}
\label{e:alllpackets}
\Pi(\GR,\lambda)=\coprod_{\ch\caO}\Pi_L(\GR,\ch\caO)
\end{equation}
where  the union runs over $\ch G(\C)$ orbits on 
$\Homadm(W_\R,\chGGamma,\lambda)$.

Suppose an L-packet $\Pi_L(\GR,\ch\caO)$ is non-empty. Some additional data is required to specify a
particular representation in $\Pi_L(\GR,\ch\caO)$.
Choose $\phi\in \ch\caO$, let $S_\phi$ be the centralizer in $\ch G(\C)$ of the image of $\phi$,
and let $\mathbb S_\phi=S_\phi/S^0_\phi$. Roughly speaking
$\Pi_L(\GR,\ch\caO)$ should be parametrized by characters of $\mathbb S_\phi$.
This leads us to define the  set of {\it Langlands data}
\begin{equation}
\caL(\chGGamma,\lambda)
=\{(\phi,\chi)\,|\, \phi\in
\Homadm(W_\R,\chGGamma,\lambda), 
\chi\in \widehat{\mathbb S_\phi}\}/\ch G(\C).
\end{equation}

Suppose $\G_1(\R), \G_2(\R)$ are two real forms of $\GC$. 
It may be that the associated L-groups are isomorphic (we say the real
forms are inner to each other if this holds). 
For example this is the case for all real forms of $\GC$ if $\GC$ is
semisimple and its Dynkin diagram admits no non-trivial
automorphisms.
Unlike  $\caC(\GR,\lambda)$ and $\caD(\GR,\lambda)$,
$\caL(\chGGamma,\lambda)$ should be related to 
representations of all these real forms. 

To make this precise it is convenient to assume that $\GC$ is
adjoint. Let $\G_1(\R),\dots, \G_n(\R)$ be the inequivalent real forms
of $\GC$ with L-group $\chGGamma$. 

\begin{proposition}[\cite{bowdoin}, Theorem 3-2]
\label{t:caL}
Assume $G(\C)$ is adjoint. There is a natural bijection
\begin{equation}
\coprod_{i=1}^n \Pi(\G_i(\R),\lambda)
\overset{1-1}\longleftrightarrow\caL(\chGGamma,\lambda).
\end{equation}
\end{proposition}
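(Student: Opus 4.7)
The plan is to reduce the statement to an orbit-by-orbit bijection, and then to invoke Vogan's parametrization of L-packets across all inner forms.

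First, using the L-packet decomposition \eqref{e:alllpackets} for each real form $\G_i(\R)$, I would rewrite the left hand side as
\begin{equation*}
\coprod_{i=1}^n \Pi(\G_i(\R),\lambda)
= \coprod_{\ch\caO}\;\coprod_{i=1}^n \Pi_L(\G_i(\R),\ch\caO),
\end{equation*}
where $\ch\caO$ runs over $\ch G(\C)$-orbits on $\Homadm(W_\R,\chGGamma,\lambda)$ (and the index $i$ only depends on the inner class, so all $\G_i(\R)$ share the same L-group). On the other hand, by definition,
\begin{equation*}
\caL(\chGGamma,\lambda) = \coprod_{\ch\caO} \widehat{\mathbb S_\phi},
\end{equation*}
where $\phi$ is any representative of $\ch\caO$. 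Thus it suffices to produce, for each orbit $\ch\caO$, a natural bijection between the \emph{large L-packet} $\Pi^{\text{big}}_L(\ch\caO) := \coprod_{i=1}^n \Pi_L(\G_i(\R),\ch\caO)$ and $\widehat{\mathbb S_\phi}$.

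Next I would set up this bijection for fixed $\phi \in \ch\caO$. Since $\GC$ is adjoint, the dual group $\ch G(\C)$ is simply connected, and the notion of strong real form from Definition \ref{d:stronginvolution} collapses to that of an ordinary real form in the inner class of $\gamma$: every $G$-conjugacy class of strong involutions projects bijectively onto a $\GC$-conjugacy class of real forms, since $Z(G)$ is trivial and the square of a strong involution must therefore be $1$. So $\xi_1,\dots,\xi_n$ (the strong real forms of \eqref{e:I1}) are in bijection with the $\G_i(\R)$, and Proposition \ref{p:LC} and Proposition \ref{t:caD} apply to each $\G_i(\R)$ individually.

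I would then construct the bijection $\Pi^{\text{big}}_L(\ch\caO) \leftrightarrow \widehat{\mathbb S_\phi}$ by translating $\phi$ into a complete one-sided L-datum $\ch S_c$ on the dual side (as sketched in the Index of Notation under $\Pi_{\ch S_c}(G,\gamma)$), then passing through the two-sided parameter space $\caZ$ of Definition \ref{d:Z} to a set of complete two-sided L-data $\bf S_c$. Each such ${\bf S_c}$ determines a Harish-Chandra module for a specific strong real form $\xi_i$, hence an element of the corresponding $\Pi(\G_i(\R),\lambda)$ via Proposition \ref{p:LC}; the centralizer computation $\mathbb S_\phi \cong \widehat{\mathbb S_\phi}$ (via Pontryagin duality on the finite abelian group $\mathbb S_\phi$, using regularity of $\lambda$) identifies the enriched fiber with $\widehat{\mathbb S_\phi}$.

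The main obstacle is the last step: showing that the resulting map $\Pi^{\text{big}}_L(\ch\caO) \to \widehat{\mathbb S_\phi}$ is simultaneously well-defined, surjective, and of the right cardinality. Well-definedness requires checking independence of the choices made when lifting $\phi$ to a complete L-datum; surjectivity and the cardinality match use the adjointness of $\GC$ (so $\ch G$ is simply connected, forcing $\mathbb S_\phi$ to match the component group of the relevant centralizer in a controlled way). This is exactly the content of \cite[Theorem 3-2]{bowdoin}, which I would invoke to conclude.
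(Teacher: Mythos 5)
Your proposal is correct and amounts to essentially what the paper does: Proposition \ref{t:caL} appears in the Overview with no proof beyond the citation of \cite[Theorem 3-2]{bowdoin}, and your orbit-by-orbit reduction to a bijection $\coprod_{i}\Pi_L(\G_i(\R),\ch\caO)\leftrightarrow\widehat{\mathbb S_\phi}$ is exactly the organization the paper carries out later via complete L-data (Theorems \ref{t:reps} and \ref{t:main}). The one caveat is your appeal to Proposition \ref{p:LC}, which assumes $G(\C)$ acceptable --- a condition adjoint groups generally fail (e.g.\ $PGL(2,\C)$, where $\rho$ does not exponentiate to the Cartan subgroup); that step should instead route through Proposition \ref{t:caD} or Theorem \ref{t:reps} (or the $\rho$-cover device mentioned at the end of the Overview), but since the substantive content is in any case carried by the cited theorem, this does not affect the conclusion.
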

Again it is reasonable to look for an algorithm to compute the right
hand side.

\subsection{L-packets and R-packets}
\label{s:packets}

Fix a real form $\GR$ of $\GC$.
Consider for the moment the problem of explicitly parametrizing
$\Pi(\GR,\lambda)$ using L-homomorphisms. 

Recall \eqref{e:alllpackets}
$\Pi(\GR,\lambda)$ is  the disjoint union of L-packets
$\Pi_L(\GR,\ch\caO)$ (some of these may be empty).
Specifying a single representation in an L-packet $\Pi_L(\GR,\ch\caO)$ amounts to
specifying a character of the  two-group $\mathbb S_\phi$ ($\phi\in\ch\caO$), a
problem we prefer to avoid.

On the other hand $\Pi(\GR,\lambda)$ is also the disjoint union of
R-packets $\Pi_R(\GR,\caO,\lambda)$ (see \eqref{e:rpackets}).
Again specifying a single representation in an R-packet requires
specifying a character of a two-group, in this case the component
group of a torus.

The key to our parametrization is that the intersection of an L-packet
and an R-packet is at most one element.
We thereby obtain a classification in terms of pairs of packets, i.e. pairs of
orbits. 
Here is a weak version of this result, which does not require any
further assumptions on $\GC$:

\begin{lemma}[\cite{ic4}, Proposition 8.3]
\label{l:Ophi}
Suppose  $\Pi_R(\GR,\caO,\lambda)$ is an R-packet, and 
$\Pi_L(\GR,\ch\caO)$ is an L-packet.
Then $\Pi_L(\GR,\ch\caO)\cap \Pi_R(\GR,\caO,\lambda)$ has at most one
element.
\end{lemma}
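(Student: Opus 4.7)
The plan is to use the Langlands classification (Proposition \ref{p:LC}) as a common framework and to show that specifying both the $K(\C)$-orbit $\caO$ and the $\ch G(\C)$-orbit $\ch\caO$ determines a Langlands parameter $(H(\R),\chi)/\GR$ uniquely.

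First, I would unpack the R-packet side. Choosing a base point $x\in\caO$ and the associated $\theta$-stable pair $H(\C)\subset B(\C)$, fixing $\caO$ fixes the $\GR$-conjugacy class of the real Cartan $H(\R)$ together with the positive system coming from $B$. As indicated in Section \ref{s:dmodules}, the irreducible $K(\C)$-equivariant $\caD_\lambda$-module $\tau$ then translates into a character $\chi$ of $H(\R)$ with $d\chi$ conjugate to $\lambda$. Consequently, membership in the R-packet pins down $(H(\R),B(\C))$ up to $\GR$-conjugacy, and restricts $\chi$ to have the prescribed differential; the residual freedom lies in characters of the component group $H(\R)/H(\R)^0$.

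Second, I would unpack the L-packet side. An admissible homomorphism $\phi\in\ch\caO$, taken up to $\ch G(\C)$-conjugacy, determines the restriction $\chi|_{H(\R)^0}$ from $\phi|_{\C^\times}$ via the standard Langlands correspondence for real tori, while $\phi(j)$ encodes the Galois action. The L-packet $\Pi_L(\GR,\ch\caO)$ is then indexed by characters of $\mathbb S_\phi$, and under the correspondence these match characters of a quotient of $H(\R)/H(\R)^0$.

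The decisive step, which I expect to be the main obstacle, is reconciling these two fibrations. Suppose $\pi_1,\pi_2$ both lie in the intersection. By the R-packet data, after an inner conjugation we may assume they share the pair $(H(\R),B(\C))$, and their characters $\chi_1,\chi_2$ have common differential $\lambda$; hence $\chi_1\chi_2^{-1}$ is a character of $H(\R)/H(\R)^0$. By the L-packet data, the parameters built from $(H(\R),\chi_i)$ are $\ch G(\C)$-conjugate. The heart of the proof is showing that these two component-group parametrizations are compatibly paired: a nontrivial $\chi_1\chi_2^{-1}$ would produce a nontrivial cocycle that alters the $\ch G(\C)$-conjugacy class of the associated L-parameter, contradicting that both parameters lie in $\ch\caO$. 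This compatibility amounts to matching characters of $H(\R)/H(\R)^0$ with characters of $\mathbb S_\phi$ under the bijection of Proposition \ref{p:LC}, and is essentially a Galois-cohomological computation on the two component groups.
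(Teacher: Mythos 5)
The paper does not actually prove this lemma: it is imported wholesale from \cite[Proposition 8.3]{ic4}, so the only thing to measure your argument against is the machinery the paper develops later (Sections \ref{s:ldata}--\ref{s:fibers}), which in effect reproves it. Your reduction of the statement is the right one, and it matches that architecture: fixing $\caO$ pins down the pair $(H(\R),B(\C))$ up to conjugacy and hence the exact dominant differential $\lambda_1$, so two members of the R-packet differ by a character of $H(\R)/H(\R)^0$, and the lemma is equivalent to the assertion that the map $\chi\mapsto[\phi(\chi)]$ from this torsor to $\chG$-conjugacy classes of admissible homomorphisms is injective.

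The gap is that this assertion --- your ``decisive step'' --- is the entire content of the lemma, and you have asserted it rather than proved it. It is not formal: composing a parameter for the torus with the embedding of its L-group into $\chGGamma$ can certainly identify parameters that were distinct for $H(\R)$ (that is exactly why L-packets have more than one element), so one must explain why this collapsing never occurs for two characters sharing the same Cartan subgroup, the same Borel subgroup, and the same regular differential. In the paper's framework the point is Proposition \ref{p:fibers} combined with Remark \ref{r:dualH} and Proposition \ref{p:weil}: the parameters with infinitesimal character $\lambda$, fixed restriction $-\theta_{x,H}^t$ to $\ch\h$, and fixed central square form a fiber of $\ch\X$, which is a torsor under a two-group canonically isomorphic to $[H(\R)/H(\R)^0]^\wedge$, and because $\lambda$ is regular, distinct points of that fiber are already non-conjugate as complete one-sided L-data, hence determine distinct L-packets. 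Your closing sentence also misidentifies the needed compatibility: $\mathbb S_\phi$ is not naturally a quotient of $H(\R)/H(\R)^0$, and its characters index the $K(\C)$-orbits (R-packets) meeting the L-packet --- which in general live on different Cartan subgroups and even different real forms --- whereas the characters of $H(\R)/H(\R)^0$ index the L-packets meeting a fixed R-packet. The two component groups play dual rather than parallel roles, and only the second of these two injectivity statements is what the lemma requires.
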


This reduces the problem of parametrizing $\Pi(\GR,\lambda)$ to the
following problems. Let $K(\C)$ be the complexification of a  maximal compact
subgroup of $\GR$.
\begin{enumerate}
\item Parametrize $K(\C)$-orbits on $\caB$,
\item Parametrize $\ch G(\C)$-orbits on $\Homadm(W_\R,\chGGamma,\lambda)$.
\end{enumerate}
It turns out that (2) is equivalent to
\begin{enumerate}
\item[(2$'$)]
Parametrize $\ch K(\C)$-orbits on $\ch\caB$.
\end{enumerate}
Here $\ch\caB$ is the variety of Borel subgroups of $\chG(\C)$ and $\ch
K(\C)$ is the fixed points of an involution of $\chG(\C)$, i.e. the
analogue of (1) on the dual side.
We also need to determine when the intersection of an L-packet and an
R-packet is non-empty.

We therefore turn  to the problem of computing the space of $K(\C)$
orbits on $\caB$, before returning the parametrization of
$\Pi(\GR,\lambda)$ in Section \ref{s:overview:Z}.

\subsection{Parametrization of $K$ Orbits on the Flag Variety}
\label{s:X}
Fix $\GC$. As in Section \ref{s:dmodules} we are interested in computing the space of  $K(\C)$-orbits
on $\caB$, where $K(\C)$ is the fixed points of a Cartan involution of
$\GC$. It turns out it is easier to treat all Cartan involutions
simultaneously. 
For the purposes of this Overview we make two simplifying assumptions:
\begin{enumerate}
\item the center of $\GC$ is trivial,
\item every automorphism of $\GC$ is inner.
\end{enumerate}
Assuming (1), condition (2) is equivalent to:

\begin{enumerate}
\item[(2$'$)]the Dynkin diagram of $\GC$ has no nontrivial
automorphisms.
\end{enumerate}
For example this holds if $\GC$ is a simple adjoint group of type 
$B_n$,$C_n$,$E_7$, $E_8$,$F_4$ or $G_2$.

Under assumption (2) every involutive automorphism of $\GC$ is of the form $\int(x)$ for some
involution $x\in\GC$ (where $\int(x)$ is conjugation by $x$).
It follows that the (equivalence classes of) real forms of
$G(\C)$ are in bijection with conjugacy classes of 
involutions in $\GC$. If $x$ is such an involution then
$\theta_x=\int(x)$ is an involution of $G(\C)$, and is the Cartan
involution of a real form. Conversely the Cartan involution of every
real form is of the form $\theta_x$. See Section \ref{s:involutions}.

\begin{definition}[Definition \ref{d:X}]
Assuming (1) and (2), define
\label{d:overview:X}
\begin{equation}
\label{e:overview:X}
\X(\GC)=\{x\in \Norm_{\GC}(\HC)\,|\, x^2=1\}/\HC.
\end{equation}  
\end{definition}
The quotient is by conjugation by $\HC$. This is a finite set.

Let $\{x_1,\dots, x_n\}$ be a set of representatives of the
conjugacy classes of involutions of $\GC$.
For $1\le i\le n$ let $\theta_i=\int(x_i), K_i(\C)=\GC^{\theta_i}$ and 
write $\G_i(\R)$ for the corresponding real form of $\GC$.
Fix a Cartan subgroup $H(\C)$.

\begin{proposition}
\label{p:overview:XGC}
Assume  (1) and (2). 
There is a natural bijection
\begin{equation}
\label{e:XGC}
\X(\GC)\overset{1-1}\longleftrightarrow \coprod_{i=1}^n K_i(\C)\bs \caB.
\end{equation}
\end{proposition}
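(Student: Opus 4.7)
The plan is to identify both sides of \eqref{e:XGC} with the orbit space $\Omega/G$, where
\[
\Omega = \{(x, H', B') : B' \text{ a Borel of } \GC,\ H' \subset B' \text{ a Cartan},\ x \in \Norm_G(H'),\ x^2 = 1\},
\]
and $G$ acts diagonally by conjugation. Observe that for any $(x,H',B')\in\Omega$ the involution $\theta_x=\int(x)$ automatically preserves $H'$, so no $\theta_x$-stability condition is being suppressed.

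First, the bijection $\Omega/G \leftrightarrow \X(\GC)$ is formal. Since $G$ acts transitively on pairs (Cartan, Borel containing it) with stabilizer exactly $H$ at the base pair $(H,B_0)$, every $G$-orbit in $\Omega$ has a unique representative with $(H',B')=(H,B_0)$, and the residual freedom is $H$-conjugation of $x$; this reproduces \eqref{e:overview:X}. Next, for $\Omega/G \leftrightarrow \coprod_i K_i\bs\caB$, I use the forgetful map $\Omega\to\{(x,B'):x^2=1\}$. Its target, modulo $G$, is exactly $\coprod_i K_i\bs\caB$: after conjugating $x$ to one of the representatives $x_i$, the residual freedom is $\Cent_G(x_i)$, which under the trivial-center assumption (1) coincides with $K_i=G^{\theta_i}$. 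So it suffices to prove that the forgetful map induces a bijection on $G$-orbits. Surjectivity amounts to: every Borel $B'$ contains a $\theta_x$-stable Cartan. Injectivity amounts to: the stabilizer $B'\cap K_x$ of $(x,B')$ in $G$ acts transitively on the set of $\theta_x$-stable Cartans inside $B'$.

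The main obstacle is the existence of a $\theta_x$-stable Cartan in an arbitrary Borel $B'$, which is subtle when $\theta_x(B')\neq B'$. The standard route is to produce a common maximal torus $T\subset B'\cap\theta_x(B')$ (this intersection being a connected solvable group that always contains a maximal torus of $G$), to compare $T$ with $\theta_x(T)$ inside the same connected group $B'\cap\theta_x(B')$, and to correct $T$ by a conjugation in this intersection so as to land on a $\theta_x$-stable Cartan; the transitivity statement then follows by a standard argument in the unipotent radical of $B'$. Both assertions are classical and closely related to the Richardson--Springer analysis \cite{richardson_springer}; granted them, composing the two bijections yields \eqref{e:XGC}.
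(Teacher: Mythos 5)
Your proposal is correct and takes essentially the same route as the paper: both pass through the orbit space of pairs $(x,B')$ modulo $G$ (your triple space $\Omega$ merely makes the two reductions symmetric), and both hinge on the same key input, namely that every Borel subgroup contains a $\theta_x$-stable Cartan, unique up to $K_x\cap B'$-conjugacy (\cite{matsuki}, \cite[Lemma 6.18]{abv}), which the paper invokes in Section \ref{s:onesided} when conjugating $x$ into $\Norm_{\GC}(H(\C))$ by an element of $B_0(\C)$. (One minor point: $\Cent_G(x_i)=G^{\theta_i}=K_i$ is automatic from $\theta_i=\int(x_i)$ and does not require the trivial-center assumption.)
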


\begin{proof}[Sketch of proof]
Let 
\begin{equation}
\caP=\{(x,B(\C))\,|\, x\in \GC, x^2=1, B(\C)\text{ a Borel subgroup}\}/\GC.
\end{equation}
Every element of $\caP$ is conjugate to one of the form
$(x_i,B(\C))$; the set of conjugacy classes of pairs $(x_i,B(\C))$ is isomorphic to
$K_i(\C)\bs \caB$. This gives a bijection
$\caP\overset{1-1}\longleftrightarrow\coprod_{i=1}^n K_i(\C)\bs \caB$.

On the other hand
fix a Borel subgroup $B_0(\C)$
containing $H(\C)$. 
Every element of $\caP$ is
conjugate to one of the form $(x,B_0(\C))$.
Furthermore by conjugating by $B_0(\C)$ we may assume
$x\in \Norm_{\GC}(H(\C))$, which gives a bijection
$\caP\overset{1-1}\longleftrightarrow \X(\GC))$.
See Section \ref{s:kgb}.
\end{proof}

We turn next to the computation of
$\Homadm(W_\R,\chGGamma,\lambda)/\ch G(\C)$. A remarkable fact, mentioned at
the end of the previous section, is that the space $\X$ (applied on
the dual side) provides this parametrization. 
To make this precise it is convenient to
assume that the center of $\chG(\C)$ is trivial. 

Fix a Cartan subgroup $\ch H(\C)$ of $\chG(\C)$. Applying Definition
\ref{d:overview:X} to $\chGGamma$ we have

\begin{equation}
\X(\chG(\C))
=\{x\in \Norm_{\chG(\C)}(\ch H(\C))\,|\, x^2=1\}/\ch H(\C).
\end{equation}

\begin{proposition}
\label{p:overview:XchGC}
Assume $\ch G(\C)$ is adjoint.
There is a natural bijection
\begin{equation}
\X(\chG(\C))\overset{1-1}\longleftrightarrow\Homadm(W_\R,\chGGamma,\lambda)/\chG(\C).
\end{equation}
\end{proposition}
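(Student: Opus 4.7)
The plan is to construct the bijection directly by normalizing an admissible L-homomorphism $\phi$ and reading off $\phi(j)$ as the corresponding element of $\X(\chG(\C))$, in parallel with the sketch of Proposition \ref{p:overview:XGC} on the group side.

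First, given $\phi\in\Homadm(W_\R,\chGGamma,\lambda)$, the image $\phi(\C^\times)$ is a commuting set of semisimple elements of $\chG(\C)$, so it lies in some maximal torus; after conjugating by $\chG(\C)$ I may arrange $\phi(\C^\times)\subset\ch H(\C)$. With this normalization, set $\xi=\phi(j)$. Admissibility forces $\xi\in\chGGamma\setminus\chG(\C)$. The Weil group relation $jzj\inv=\bar z$ translates into $\xi\phi(z)\xi\inv=\phi(\bar z)$, so conjugation by $\xi$ preserves $\phi(\C^\times)$; since $\lambda$ is regular, $\phi(\C^\times)$ generates a Zariski-dense subgroup of $\ch H(\C)$, and hence $\xi\in\Norm_{\chGGamma}(\ch H(\C))$. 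The relation $j^2=-1$ yields $\xi^2=\phi(-1)=\exp(2\pi i\lambda)=1$, the last equality coming from integrality of $\lambda$ together with the adjointness of $\chG(\C)$. Thus $\xi\in\tX(\chG(\C))$.

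The residual freedom in the normalization $\phi(\C^\times)\subset\ch H(\C)$ is conjugation by $\Norm_{\chG(\C)}(\ch H(\C))$, but regularity of $\lambda$ ensures that only the subgroup $\ch H(\C)$ fixes $\phi|_{\C^\times}$ pointwise (the Weyl group acts freely on regular points). Consequently the class of $\xi$ in $\tX(\chG(\C))/\ch H(\C)=\X(\chG(\C))$ is well-defined, producing a map
\[
\Homadm(W_\R,\chGGamma,\lambda)/\chG(\C)\longrightarrow\X(\chG(\C)).
\]

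For the inverse, given $\xi\in\X(\chG(\C))$ I would reconstruct $\phi$ by setting $\phi(j)=\xi$ and defining $\phi|_{\C^\times}\colon\C^\times\to\ch H(\C)$ from $\lambda$ via the $\pm 1$-eigenspace decomposition of $\ch\h$ under $\Ad(\xi)$: writing $\lambda=\lambda^++\lambda^-$ with $\Ad(\xi)\lambda^\pm=\pm\lambda^\pm$ determines the exponents $\mu,\nu\in\ch\h$ in the formula $\phi(z)=z^\mu\bar z^\nu$, after which the intertwining relation $\xi\phi(z)\xi\inv=\phi(\bar z)$ becomes automatic. The main obstacle is verifying this reverse construction: one must check that $\mu-\nu$ lands in the coweight lattice of $\chG(\C)$ so that $\phi$ is a genuine homomorphism (this reduces to integrality of $\lambda$ and adjointness of $\chG(\C)$), and that the two constructions are mutually inverse modulo $\chG(\C)$-conjugation. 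Throughout, regularity of $\lambda$ is what rigidifies the picture so that the residual ambiguity is exactly $\ch H(\C)$-conjugation, matching the definition of $\X(\chG(\C))$.
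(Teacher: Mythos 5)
Your strategy runs the paper's construction in reverse (the paper starts from $x$ and writes down $\phi$ explicitly), which is legitimate in principle, but the central normalization step contains a genuine error: $\phi(j)$ is not an involution, so the assignment $\phi\mapsto\phi(j)$ does not land in $\X(\chG(\C))$. After arranging $\phi(\C^\times)\subset\ch H(\C)$, write $\phi(e^z)=\exp(z\mu+\overline z\nu)$ with $\mu,\nu\in\ch\h$, $\mu$ in the $W$-orbit of $\lambda$ and $\nu=\Ad(\phi(j))\mu$. The relation $j^2=-1$ then gives
\[
\phi(j)^2=\phi(-1)=\exp(\pi i(\mu-\nu)),
\]
not $\exp(2\pi i\lambda)$: you have evaluated $\phi$ at $-1=e^{i\pi}$ as if $\phi|_{\C^\times}$ were $z\mapsto z^{2\lambda}$. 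The element $\exp(\pi i(\mu-\nu))$ is the value at $-1$ of the cocharacter $\mu-\nu\in X_*(\ch H(\C))$ and is in general a nontrivial element of order two, even with $\chG(\C)$ adjoint and $\lambda$ integral (it is trivial only when $\mu-\Ad(\phi(j))\mu\in 2X_*(\ch H(\C))$). This is exactly why the paper's formula \eqref{e:phizj} reads $\phi(j)=\exp(-\pi i\lambda_1)\eta$ rather than $\phi(j)=\eta$: the element of $\X(\chG(\C))$ attached to $\phi$ is $x=e^{\pi i\mu}\phi(j)$, for which $x^2=e^{\pi i\mu}e^{\pi i\nu}\phi(j)^2=\exp(2\pi i\mu)$, and this is $1$ precisely because $\chG(\C)$ is adjoint and $\lambda$ is integral. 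Your inverse construction fails for the same reason: setting $\phi(j)=\xi$ with $\xi^2=1$ forces $\phi(j)^2=1\ne\phi(-1)$, so the map you define is not a homomorphism of $\weil$.

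Two smaller points. First, the normalization $\phi(\C^\times)\subset\ch H(\C)$ determines $\mu$ only up to $W$, and replacing $\mu$ by $w\mu$ changes $\phi(j)$ by conjugation by a representative of $w$ in $\Norm_{\chG(\C)}(\ch H(\C))$; to obtain a class well defined modulo $\ch H(\C)$ alone, as $\X(\chG(\C))$ requires, you must also pin down $\mu$ within its Weyl orbit, e.g.\ by insisting $\mu=\lambda$ exactly (this is the role of the dominance condition imposed on $\Ad(g)\lambda$ in Section \ref{s:ldata}). Second, the claim that $\phi(\C^\times)$ is Zariski dense in $\ch H(\C)$ can fail even for regular $\lambda$ (when $\nu=\mu$ its closure is a proper subtorus); the standard remedy is to conjugate $\phi(j)$ inside $\Cent_{\chG(\C)}(\phi(\C^\times))$, which contains $\ch H(\C)$, so as to make it normalize $\ch H(\C)$.
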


\begin{proof}[Sketch of proof]
We  may view $\lambda$ as an element of the Lie algebra of $\ch H(\C)$.
If $x\in \Norm_{\ch G(\C)}(\ch H(\C))$, $x^2=1$  define
\begin{equation}
\begin{aligned}
\phi(z)&=z^\lambda\overline z^{\Ad(x)\lambda}\\
\phi(j)&=e^{-\pi i\lambda}x.
\end{aligned}
\end{equation}
Then $\phi\in \Homadm(W_\R,\chGGamma,\lambda)$, and every element 
of $\Homadm(W_\R,\chGGamma,\lambda)$ is conjugate to one of this form.
See Section \ref{s:ldata}.
\end{proof}

Now assume $Z(G(\C))$ and $Z(\chG(\C))$ are trivial, and $\Out(G(\C))=1$. 
Then  Proposition \ref{p:overview:XGC} holds here as well, so
there are bijections
\begin{equation}
\label{e:twobijections}
\Homadm(W_\R,\chGGamma,\lambda)
\overset{1-1}\longleftrightarrow 
\X(\chG(\C))\overset{1-1}\longleftrightarrow \coprod_{i=1}^m \ch K_i(\C)\bs\ch\caB.
\end{equation}
Here $\ch\caB$ is the space of Borel subgroups of $\chG(\C)$, 
$y_1,\dots, y_m$ are representatives of the conjugacy classes of
involutions in $\chG(\C)$, and  for each $i$
$\ch K_i(\C)=\Cent_{\chG(\C)}(y_i)$.

\subsection{The parameter space $\caZ$}
\label{s:overview:Z}

We now combine the results of the previous section with 
Lemma \ref{l:Ophi} to define the parameter space $\caZ$ for
$\Pi(\GR,\lambda)$.

To avoid some technical issues in the previous section
we assumed the center of $\GC$ is trivial and $\GC$ has no outer automorphisms. For
consideration of $\caL(\chGGamma,\lambda)$ we  assumed the
corresponding facts for $\chG(\C)$. 
Therefore we assume $G(\C)$ is both simply connected and adjoint. We
may as well also assume $G(\C)$ is simple, i.e. of type $G_2,F_4$ or
$E_8$. 

As in the previous section let $x_1,\dots, x_n$ be representaitves of
the conjugacy classes of involutions of $\GC$, and let
$K_i(\C)=\Cent_{\GC}(x_i)$.  Dually let $y_1,\dots, y_m$ be
representatives of the conjugacy classes of involutions of $\ch
G(\C)$, and let $\ch K_j(\C)=\Cent_{\ch G(\C)}(y_j)$.

\begin{definition}
\label{d:overview:Z}
Assume $\GC$ is simple, simply connected and adjoint.
Fix Cartan subgroups $\HC$  of $\GC$ and $\ch H(\C)$ of $\ch G(\C)$.
Let $\h,\ch\h$ be the Lie algebras of $\HC$ and $\ch H(\C)$,
respectively. 
Let
\begin{subequations}
\renewcommand{\theequation}{\theparentequation)(\alph{equation}}  
\begin{equation}
\begin{aligned}
\X=\X(\GC)&=\{x\in \Norm_{\GC}(\HC)\,|\,x^2=1\}/\HC\\
\ch\X=\X(\chG(\C))&=\{y\in \Norm_{\chG(\C)}(\ch H(\C))\,|\,y^2=1\}/\ch H(\C).
\end{aligned}
\end{equation}
\end{subequations}
There is a natural adjoint map $\End(\h)\ni X\mapsto X^t\in
\End(\ch\h)$. Let

\begin{equation}
\label{e:adjoint}
\caZ=\{(x,y)\in\X\times \ch\X\,|\, (\Ad(x)|_\h)^t=-\Ad(y)|_{\ch\h}\}.
\end{equation}
\end{definition}

By \eqref{e:XGC} and \eqref{e:twobijections} $\caZ$ 
may be viewed as  a subset of 
\begin{equation}
\label{e:overviewZ}
\coprod_{i=1}^n K_i(\C)\bs\caB
\times
\coprod_{i=1}^m \ch K_i(\C)\bs\ch\caB.
\end{equation}
See \eqref{e:Zorbit}.
Here is a special case of the main result:

\begin{theorem}[Theorem \ref{t:reps}]
\label{t:overviewmain}
Assume $\GC$ is simple, simply connected and adjoint.
Write $G_1(\R),\dots, G_n(\R)$ for the equivalence classes of real
forms of $\GC$.

There is a natural bijection
\begin{equation}
\label{e:caZ}
\caZ
\overset{1-1}\longleftrightarrow
\coprod_{i=1}^n\Pi(G_i(\R),\lambda).
\end{equation}
\end{theorem}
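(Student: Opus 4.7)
The plan is to assemble the bijection by chaining together the parametrizations of R-packets (by $K$-orbits on $\caB$) and L-packets (by $\ch G(\C)$-orbits on $\Homadm(W_\R,\chGGamma,\lambda)$) already established in the Overview, and then to invoke Lemma \ref{l:Ophi} to pin down a single representation from each compatible pair. Under the hypotheses $\GC$ simple, simply connected and adjoint, both $Z(\GC)$ and $Z(\chG(\C))$ are trivial and both Dynkin diagrams are automorphism-free, so Propositions \ref{p:overview:XGC} and \ref{p:overview:XchGC} apply to $\GC$ and to $\chG(\C)$ simultaneously.

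First, Proposition \ref{p:overview:XGC} applied to $\GC$ gives $\X \leftrightarrow \coprod_{i=1}^n K_i(\C)\bs\caB$, and combining this with Proposition \ref{t:caD} and the R-packet decomposition \eqref{e:rpackets} expresses $\coprod_i \Pi(G_i(\R),\lambda)$ as the disjoint union of the R-packets indexed by $\X$. Dually, Proposition \ref{p:overview:XchGC} applied to $\chG(\C)$ gives $\ch\X \leftrightarrow \Homadm(W_\R,\chGGamma,\lambda)/\chG(\C)$, and by \eqref{e:alllpackets} these orbits label the L-packets that partition the same set $\coprod_i \Pi(G_i(\R),\lambda)$. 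So the two projections from $\caZ \subset \X\times\ch\X$ select, for each point, an R-packet and an L-packet living inside a common space, and Lemma \ref{l:Ophi} says their intersection contains at most one representation.

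The crux is to show that the algebraic condition $(\Ad(x)|_\h)^t = -\Ad(y)|_{\ch\h}$ cut out by \eqref{e:adjoint} is precisely the condition that the R-packet of $x$ and the L-packet of $y$ meet. Given $(x,y)\in\caZ$, the element $x$ determines the Cartan involution $\theta_x = \int(x)$ and hence a real form $G_i(\R)$; the element $y$ produces, via the recipe $\phi(z) = z^\lambda\bar z^{\Ad(y)\lambda}$, $\phi(j) = e^{-\pi i\lambda}y$, an admissible homomorphism with infinitesimal character $\lambda$. The adjoint relation matches the twisted involution that $x$ defines in $W(\GC,\HC)$ with the one $y$ defines in $W(\chG(\C),\ch H(\C))$ under the canonical identification of the two Weyl groups, which both identifies the real form carrying the L-packet of $\phi$ with $G_i(\R)$ and aligns the $\theta_x$-stable Cartan subgroup underlying the R-packet with the Cartan on which the Langlands character $\chi$ of Proposition \ref{p:LC} is defined. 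Granted this matching, every pair $(x,y) \in \caZ$ gives rise to a unique representation $\pi(x,y)$ as the single element of the intersection.

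For surjectivity, any $\pi \in \Pi(G_i(\R),\lambda)$ belongs to a unique R-packet by \eqref{e:rpackets} and to a unique L-packet by \eqref{e:alllpackets}; pulling these back through the bijections of the previous paragraph yields $x \in \X$ and $y \in \ch\X$, and the two packets share $\pi$, so the non-emptiness of the intersection forces the compatibility, placing $(x,y)$ in $\caZ$. The main obstacle is the compatibility step: one must verify, through an explicit matching of $\caD$-module data with Langlands character data, that the concrete transpose condition in \eqref{e:adjoint} exactly captures when an R-packet and an L-packet share a representation. Once this is done, bijectivity follows formally.
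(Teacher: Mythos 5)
Your argument is essentially the first of the three viewpoints the paper itself offers in its sketch of proof: decompose $\coprod_{i}\Pi(G_i(\R),\lambda)$ simultaneously into R-packets indexed by $\X$ and L-packets indexed by $\ch\X$, apply Lemma \ref{l:Ophi} to get uniqueness, and identify the adjoint condition \eqref{e:adjoint} as the criterion for the two packets to meet. The crux you flag --- that the transpose condition exactly characterizes non-empty intersection --- is likewise left unverified in the paper's sketch; the paper's complete argument is instead routed through the L-data formalism of Section \ref{s:ldata} (Theorem \ref{t:reps}, citing \cite[Theorem 2-12]{bowdoin}), which is what actually supplies that equivalence.
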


This is a  refinement of Lemma  \ref{l:Ophi}, and 
is a restatement of \cite[Theorem 2-12]{bowdoin}.

\begin{proof}[Sketch of proof]

Here are three ways to think of the proof of this result.

Fix $1\le i\le n$ and $1\le j\le m$. Also fix an orbit 
$\caO$ of $K_i(\C)$ on $\caB$, and an orbit  $\ch\caO$ of $\ch K_j(\C)$ on
$\ch\caB$. 
By Lemma \ref{l:Ophi}
$\Pi_L(G_i(\R),\ch\caO)\cap \Pi_R(G_i(\R),\caO,\lambda)$ consists of at most
element.
Condition \eqref{e:adjoint} makes
this intersection nonempty, and we obtain every representation
exactly once this way.

Alternatively, fix  $1\le i\le m$ and an orbit $\ch\caO$ of $\ch K_i(\C)$ on $\ch\caB$.
Fix $1\le j\le n$,
and let $\Pi_L(\G_j(\R),\ch\caO)$ be the corresponding L-packet.
The choice of an orbit of $K_j(\C)$ on $\caB$
defines a character $\chi$
of $\mathbb S_\phi$, and by Proposition \ref{t:caL} this defines a
representation of $G_j(\R)$.
This is the proof in Section \ref{s:ldata} (see \cite{bowdoin}).

Finally, fix $1\le i\le n$, an orbit $\caO$ of $K_i(\C)$ on $\caB$,
and let $\Pi_R(\G_i(\R),\caO,\lambda)$ be the corresponding R-packet.
The data needed to specify a $\caD_\lambda$ module supported on $\caO$ is precisely an
orbit of $\ch K_j(\C)$ on $\ch\caB$ for some $1\le j\le m$. By Proposition
\ref{t:caD} this defines an irreducible representation of $G_i(\R)$.
\end{proof}

As is clear from the statement, to explicitly parametrize
$\Pi(\GR,\lambda)$ the main issue is to understand the spaces
$\X$ and $\ch\X$.  It is not immediately obvious how to explicitly compute
these, but this can be done using the Tits group. 
This is discussed in Section \ref{s:tits}. 
In any event all of the structural data discussed in
Section \ref{s:overview} can be read off from the space $\X$.

\begin{proposition}[Proposition \ref{p:recapitulation}]
Use the notation of Theorem \ref{t:overviewmain}.

\label{l:or}
\hfil\break
\noindent (1) The real forms of $G$ are parametrized by
$\X\cap H(\C)/W$.
Write $x_1,\dots, x_n$ for representatives of this set.
\medskip

\noindent (2) 
$\X\overset{1-1}\longleftrightarrow\coprod_{i=1}^nK_{x_i}(\C)\bs\caB$.
\medskip

\noindent (3)
Associated to each $x\in\X$ is a pair  $(G_x(\R),H_x(\R))$ 
consisting of a real form of $\GC$  and a Cartan subgroup of $G_x(\R)$.
This induces a bijection between  $\X/W$ and the union, over real
forms $\GR$ of $\GC$, of the conjugacy classes of Cartan
subgroups of $\GR$.

\medskip
\noindent (4) For  $x\in\X$ we have
\begin{equation}
W(G_x(\R),H_x(\R))\simeq \Stab_W(x).
\end{equation}
\end{proposition}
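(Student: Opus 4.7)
The plan is to prove the four parts in order, since each naturally builds on the previous.

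For (1), the key observation is that $\X \cap H(\C)$ is just the set of involutions in $H(\C)$ (the $H(\C)$-conjugation defining $\X$ is trivial on $H(\C)$), and the $W$-action inherited from $\Norm_{\GC}(H(\C))/H(\C) = W$ restricts to the usual Weyl-group action on $H(\C)$. Since involutions in $\GC$ are semisimple, every involution is $\GC$-conjugate to one in $H(\C)$, and two elements of $H(\C)$ are $\GC$-conjugate iff $W$-conjugate. Under the overview hypotheses $Z(\GC) = 1$ and $\Out(\GC) = 1$, real forms correspond bijectively to $\GC$-conjugacy classes of involutions, giving (1) and a set of representatives $x_1, \ldots, x_n \in H(\C)$. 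Part (2) is then Proposition \ref{p:overview:XGC} specialized to these representatives.

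For (3), I would define the assignment as follows: given $x \in \X$, the element $\theta_x = \int(x)$ is a Cartan involution with associated real form $\G_x(\R)$, and since $x \in \Norm_{\GC}(H(\C))$, $H(\C)$ is $\theta_x$-stable and descends to a Cartan $H_x(\R) \subset \G_x(\R)$. This factors through $\X/W$: if $x' = n x n^{-1}$ with $n \in \Norm_{\GC}(H(\C))$, then $\theta_{x'} = \Ad(n)\,\theta_x\,\Ad(n)^{-1}$ is $\GC$-equivalent to $\theta_x$, and the restricted involutions on $H(\C)$ are $W$-conjugate, producing a $\G_x(\R)$-conjugate Cartan. Surjectivity is standard: every conjugacy class of Cartans of $\G_i(\R)$ arises from some $\theta_i$-stable Cartan of $\GC$, which can be $\GC$-conjugated into $H(\C)$. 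Injectivity, which I expect to be the main obstacle, requires showing that $x, x'$ giving equivalent pairs must be $W$-conjugate; I would first $\GC$-conjugate to arrange $\theta_x = \theta_{x'}$ (forcing $x = x'$ in $\GC$ since $Z(\GC)=1$), then exploit the $\G_x(\R)$-conjugacy of the real Cartans --- together with the fact that any conjugation of $H(\C)$ inside $\GC$ can be realized by an element of $\Norm_{\GC}(H(\C))$ --- to produce the required $W$-element relating the $H(\C)$-classes of $x$ and $x'$.

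For (4), I would compute $\Stab_W(x)$ directly. Writing $w \in W$ as $n H(\C)$ with $n \in \Norm_{\GC}(H(\C))$, the condition $n x n^{-1} \sim x$ in $\X$ reads $n x n^{-1} = h x h^{-1}$ for some $h \in H(\C)$, which rearranges to $h^{-1} n \in \Cent_{\GC}(x) = K_x(\C)$ (using $Z(\GC) = 1$). Hence $n \in K_x(\C) \cdot H(\C)$, and $\Stab_W(x) = \Norm_{K_x(\C)}(H(\C)) / H(\C)^{\theta_x}$, which is the standard complex description of the real Weyl group $W(\G_x(\R), H_x(\R))$. Once (3) is in hand, this step is a direct calculation, and the technical heart of the proposition lies entirely in the injectivity argument in (3).
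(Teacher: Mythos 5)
Your proposal is correct, and for parts (2)--(4) it follows essentially the paper's own route: (2) is Corollary \ref{c:I}; your computation in (4) is exactly the chain $W(K_x,H)=\Norm_{K_x}(H)/(H\cap K_x)=\Stab_N(x)H/H\simeq\Stab_W(x)$ of Proposition \ref{p:realweyl}, run in the opposite direction; and your injectivity argument in (3) --- first conjugate the two strong involutions to agree, then move one $\theta_x$-stable Cartan to the other so as to land in $\Norm_{\GC}(H(\C))$ --- is the same mechanism as the paper's identification $\caH\simeq\tX/N\simeq\X/W$ in Proposition \ref{p:X/W}, just carried out element-by-element rather than through the auxiliary set of pairs $(\xi,H_1)$. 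One precision worth recording in (3): the fact you actually need is not that a conjugation of $H(\C)$ can be realized in $\Norm_{\GC}(H(\C))$ (that part is trivial once the conjugating element preserves $H(\C)$), but that the element carrying one $\theta_x$-stable Cartan to the other can be chosen in $K_x(\C)=\Cent_{\GC}(x)$, so that it also centralizes $x$; this is the standard correspondence between $G_x(\R)$-conjugacy of real Cartan subgroups and $K_x(\C)$-conjugacy of $\theta_x$-stable Cartan subgroups, which your appeal to ``$G_x(\R)$-conjugacy of the real Cartans'' is implicitly using. The one place you genuinely diverge is part (1): you obtain the parametrization from the general theory of semisimple elements (every involution is conjugate into $H(\C)$, and $\GC$-conjugacy on $H(\C)$ reduces to $W$-conjugacy). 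The paper's general statement, Proposition \ref{p:Xstrongrealforms}, is instead proved by reducing an arbitrary $x\in\X$ to the fundamental fiber $\X_{\delta}$ via iterated inverse Cayley transforms, with the $W_{i,\delta}$-equivalence coming from \eqref{e:GN}. Your argument is more elementary but is available only under the Overview hypotheses ($\gamma=1$, so $\delta=1$, $\X_{\delta}=\X\cap H(\C)$ and $W_{i,\delta}=W$); it is precisely the computation the paper records in Example \ref{ex:equalrank}, and it is the natural choice in the setting of the statement, whereas the Cayley-transform proof is what survives in the twisted, non-equal-rank case.
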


See Section \ref{s:recap}.

It is clear from the discussion that the setting is entirely symmetric
in $\GC$ and $\chG(\C)$.
This is a manifestation of {\it Vogan
duality} \cite{ic4}, which is an essential guiding principal in the
definitions.
By symmetry we have, in the setting of \eqref{e:caZ},

\begin{equation}
\label{e:PiZPi}
\coprod_{i=1}^n\Pi(G_i(\R),\lambda)
\overset{1-1}\longleftrightarrow
\caZ
\overset{1-1}\longleftrightarrow
\coprod_{j=1}^m\Pi(\chG_j(\R),\ch\lambda)
\end{equation}
(here $\ch\lambda$ is a regular integral infinitesimal character for
$\ch G(\C)$).
See Corollary \ref{c:duality} for the  general statement, without the
restrictions on $\GC$.
In fact  this is a refinement of Vogan duality of \cite{ic4}, which considers
only a single real form  at a time. See \cite[Theorems 1.24 and 15.12]{abv}. 

This explains the nature of R-packets: it is clear
from the discussion that the Vogan dual of an R-packet for $G(\R)$
is an L-packet for some real form  of $\ch G(\C)$.

\bigskip
For ease of exposition in this Overview we have made various assumptions.
Removing these constraints involves a number of closely related
technical issues:

\begin{enumerate}
\item 
If $\GC$ is not adjoint we weaken the assumption $x^2=1$ to $x^2\in Z(\GC)$.
This introduces the notion of {\it strong
  real form}
(Section \ref{s:extended}).
\item
If (the derived group of) $\GC$ is not simply connected there are a
finite number of different infinitesimal characters at which the
representation theory looks different. Dually, since $\chG(\C)$ is not necessarily
adjoint, we allow $y^2\in Z(\chG(\C))$.
\item
If $\rho$ does not exponentiate to $\GC$ we  allow characters of
the $\rho$-cover of $H(\R)$ \cite[Definition 8.11]{av1}.
\item
If $\GC$ admits non-trivial outer automorphisms, we specify an
involution $\gamma$ in  $\Out(\GC)$. Then all of the objects discussed
above become ``twisted'' by $\gamma$.
\end{enumerate}

Here is an example which takes some of these issues into account. We
change notation to be more in agreement with the rest of the paper.

Let $G=Sp(2n,\C)$ and let $\chG$ be the dual group, i.e. $SO(2n+1,\C)$.
Fix Cartan subgroups $H$ of $G$ and $\ch H$ of $\chG$, with 
Lie algebras $\h$ and $\ch\h$.

\begin{theoremplain}
The irreducible representations of $Sp(2n,\R)$ with the same infinitesimal
character as the trivial representation are parametrized by:

$$
\{(x,y)\}/H\times\ch H
$$
\end{theoremplain}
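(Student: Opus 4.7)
The plan is to derive this theorem as the specialization of the main Theorem \ref{t:reps} (a precise form of the sketch Theorem \ref{t:overviewmain}) to the basic data $(G,\gamma)$ with $G = Sp(2n,\C)$ and $\gamma = 1$. The Dynkin diagram $C_n$ admits no non-trivial automorphism so $\Out(G) = 1$; the dual group $\chG = SO(2n+1,\C)$ is adjoint, while $G$ is simply connected with $Z(G) = \{\pm I\}$. Consequently the simplifying hypothesis of Theorem \ref{t:overviewmain} (``simple, simply connected and adjoint'') fails on $G$ itself, and the general form of the main theorem, allowing $x^2 \in Z(G)$, is required.

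The first step is to make the schematic right-hand side precise. Following Definition \ref{d:overview:Z} in its general form, the set $\{(x,y)\}/H\times\ch H$ denotes
\begin{equation*}
\caZ = \{(x,y) : x \in \Norm_{G}(H),\ x^2 \in Z(G),\ y \in \Norm_{\chG}(\ch H),\ y^2 = 1,\ (\Ad(x)|_\h)^t = -\Ad(y)|_{\ch\h}\}/(H \times \ch H).
\end{equation*}
The condition $y^2 = 1$ enforces the trivial infinitesimal character: via the L-parameter recipe of Proposition \ref{p:overview:XchGC}, $\exp(2\pi i \lambda) = y^2 = 1$, so $\lambda = \rho$ (the infinitesimal character of the trivial representation) modulo $W$.

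The next step is to apply Theorem \ref{t:reps}, which furnishes a natural bijection between $\caZ$ and the disjoint union, over strong real forms $\xi$ of $G$, of $\Pi(G,\xi)$ with infinitesimal character $\rho$. To specialize to $Sp(2n,\R)$ itself, one identifies among the $W$-orbits in $\X \cap H$ --- which parametrize strong real forms by Proposition \ref{l:or}(1) --- the class whose inner twist is the split form $Sp(2n,\R)$, recognizable by its complexified maximal compact $K(\C) = GL(n,\C)$, as distinguished from $Sp(p,q)$ and $Sp(n)$.

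The hard part is this last identification: pinning down which concrete $W$-orbit, and which lift $x^2 \in \{\pm I\}$, represents $Sp(2n,\R)$, and confirming that restricting $\caZ$ to that component yields exactly $\Pi(Sp(2n,\R),\rho)$. The cleanest route is through the Tits-group construction of Section \ref{s:tits}, which furnishes explicit representatives and lets one read off the strong real form directly from $x$; once the strong involution class is pinned down, the bijection of Theorem \ref{t:reps} restricts to the claimed parametrization.
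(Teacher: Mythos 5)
Your overall strategy---specializing Theorem \ref{t:reps} (equivalently Theorem \ref{t:main}) to $G=Sp(2n,\C)$, $\gamma=1$, $\chG=SO(2n+1,\C)$ adjoint, with $y^2=I$ forced and $\Lambda=\{\rho\}$ fixing the infinitesimal character---is the right one, and is how the paper intends this example to be read off from the general theorem. But your argument stops exactly where the content of this particular statement lies. The parametrizing set in the theorem is not ``$x^2\in Z(G)$, then restrict to the split component'': it is cut out by the explicit condition $x^2=-I$. You acknowledge that one must ``pin down which concrete $W$-orbit, and which lift $x^2\in\{\pm I\}$, represents $Sp(2n,\R)$,'' label it the hard part, and defer it to the Tits-group machinery of Section \ref{s:tits}. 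As written, you have only shown that \emph{some} subset of $\{(x,y)\,|\,x^2\in Z(G),\dots\}/H\times\ch H$ parametrizes $\Pi(Sp(2n,\R),\rho)$, which does not establish the theorem; and the Tits group is the wrong tool here---it enumerates $K\bs G/B$ \emph{within} a fixed strong real form, not the identification of which strong real form a given $x$ defines.

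The missing step is elementary. Since $\gamma=1$, by Example \ref{ex:equalrank} (or Proposition \ref{p:Xstrongrealforms}) the strong real forms of $Sp(2n,\C)$ are parametrized by $(\frac12\ch P/X_*(H))/W$ with $\ch P\simeq\Z^n\cup(\Z+\frac12)^n$, $X_*(H)\simeq\Z^n$, and $W$ the signed permutations; this is the paper's own $Sp(2n)$ example in Section \ref{s:extended}. The classes with $\xi^2=I$ are represented by $\frac12(1,\dots,1,0,\dots,0)$ and correspond to the groups $Sp(p,q)$. The classes with $\xi^2=-I$ come from $(\frac12\Z+\frac14)^n$, and every such element is equivalent under $W$ and $X_*(H)$ to $\frac14(1,\dots,1)$, whose involution $\int(\diag(i,\dots,i,-i,\dots,-i))$ has fixed group $GL(n,\C)$, i.e.\ is the split form. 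Hence there is exactly \emph{one} strong real form with $x^2=-I$ and it is $Sp(2n,\R)$; uniqueness matters, since two strong real forms over the split form would make the set with $x^2=-I$ a double count of $\Pi(Sp(2n,\R),\rho)$ under Theorem \ref{t:main}(2). Only with this computation does the clean defining condition $x^2=-I$, $y^2=I$ emerge, and with it the statement being proved.
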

where
\begin{equation}
\begin{aligned}
x&\in \Norm_G(H),\quad x^2=-I,\\
y&\in\Norm_{\chG}(\ch H)\quad y^2=I,
\end{aligned}
\end{equation}
and
\begin{equation}
(\Ad(x)|_{\h})^t=-Ad(y)|_{\ch\h}.
\end{equation}
The quotient is by the conjugation action of $H\times\ch H$.
This is a finite set. The number of elements is given by the following
table:

\bigskip
\begin{tabular}{|c|c|c|c|c|c|c|c|c|c|}
\hline
n&1&2&3&4&5&6&7&8&9\\
\hline
&4&18&88&460&2,544&1,4776&89,632&565,392&3,695,680\\
\hline
\end{tabular}

\medskip

This set also parametrizes the irreducible representations of real
forms  $SO(p,q)$  of $SO(2n+1,\C)$ with trivial infinitesimal character.
This is an example of Vogan duality (see \eqref{e:PiZPi}).

If we instead require $x^2=I$ this parametrizes representations of
the groups $Sp(p,q)$. Dually we  obtain representations of
$SO(p,q)$ with infinitesimal character $2\rho$.
\bigskip

We conclude this section with a comment about the requirements
that the infinitesimal character $\lambda$ be regular and integral. 
It is straightforward to remove both of these conditions, 
but introduces some extra complications.

First of all if $\lambda$ is regular
but not integral we replace $\ch G(\C)$ with a subgroup whose root
system is dual to the integral root system defined by $\lambda$. This
follows the program of \cite{ic4}, and with this change many of the
preceding constructions hold. 

Secondly if $\lambda$  is singular (and possibly non-integral) 
let $\lambda'$ be a regular element such that
$\lambda-\lambda'$ is a sum of roots.
By Zuckerman's translation principle \cite{translation_principle}
$\Pi(\GR,\lambda)$ is a an explicitly computable 
subset of $\Pi(\GR,\lambda')$, thereby reducing this case to the one of regular
infinitesimal character.

\subsection{Outline}
\label{s:outline}

Here is an outline of the contents of the paper.

Section \ref{s:groups} defines the basic objects of study, i.e. reductive groups and
root data.
In Section \ref{s:involutions} we discuss real forms of a complex group.

We put the information from Sections \ref{s:groups} and \ref{s:involutions} together to define {\it
  basic data} in Section \ref{s:basic}. This consists of either a pair
$(G,\gamma)$ consisting of a complex reductive group and an involution
in $\Out(G)$, or a pair $(D_b,\gamma)$ consisting of a based root
datum and an involution of it.

Given basic data we define the {\it extended group}
$G^\Gamma=G\rtimes\Gamma$ in Section \ref{s:extended}.
We also define the notions of {\it strong involution} and
{\it strong real form}.
Harish-Chandra modules for strong real forms are discussed in Section
\ref{s:representations}.

The first main step in constructing the parameter space is L-data
(Section \ref{s:ldata}).
The relation with $K$-orbits on $G/B$ is the  subject of Section
\ref{s:kgb}.

The primary combinatorial construction is the {\it one-sided parameter
  space} $\X$ of Section \ref{s:onesided}. Once we have $\X$ it is
straightforward to define the parameter space
$\caZ\subset\X\times\ch\X$. The main result is Theorem \ref{t:main}.

After stating the main result, we go back down into some of the
details of the space $\X$ in Sections \ref{s:fibers}-\ref{s:cayley} and relate this space to
structure theory of $G$. A summary of the relationship between $\X$
and structure theory of $G$ is found in Section \ref{s:recap}.
We discuss the {\it reduced parameter space} $\X^r$ in Section \ref{s:reduced}.
In Section \ref{s:tits} we use the Tits group
to explicitly compute the space $\X$.

Some examples are discussed in the body of the paper. In particular
the very informative cases of $SL(2)$ and $PSL(2)$ are discussed in
examples
\ref{ex:sl2C},
\ref{ex:sl2_1},
\ref{ex:sl2_2},
and \ref{ex:psl2_1}.
 More examples
may be found in \cite{adams_snowbird}.
\sec{Reductive Groups and Root Data}
\label{s:groups}

For many
purposes one may identify a connected reductive algebraic 
group with its group of complex points.
For the discussion
of real forms (Section \ref{s:involutions}), and to keep
the exposition as elementary as possible, we choose to work with complex
groups. Experts, and those with an interest in other fields, may wish
to convert to the language of algebraic groups where appropriate.

We now describe the parameters for a connected reductive complex
group.  These are provided by {\it root data} and {\it based root
 data}. Good references are the books by Humphreys
\cite{humphreys_lag} and Springer \cite{springer_book}.

We begin with a pair $X,\ch X$ of 
free abelian groups of finite rank, together with 
a perfect pairing $\langle\,,\,\rangle:X\times\ch X\rightarrow \Z$.
Suppose $\Delta\subset X,\ch\Delta\subset \ch X$ are finite sets,
equipped with a bijection $\alpha\rightarrow\ch\alpha$.
For $\alpha\in \Delta$ define the reflection
$s_\alpha\in\Hom(X,X)$:
$$
s_\alpha(x)=x-\langle x,\ch\alpha\rangle\alpha \quad (x\in X)
$$
and define $s_{\ch\alpha}\in \Hom(\ch X,\ch X)$ similarly.

A {\it root datum} is a quadruple
\begin{equation}
\label{e:rootdatum}
D=(X,\Delta,\ch X,\ch\Delta)
\end{equation}
where $X,\ch X$ are free abelian groups of finite rank, in duality via
a perfect pairing $\langle\,,\,\rangle$, and
$\Delta,\ch\Delta$ are finite subsets of $X,\ch X$, respectively. 
We assume there is a bijection
$\Delta\ni\alpha\mapsto\ch\alpha\in\ch\Delta$ such that for
all $\alpha\in\Delta$,
\begin{equation}
\label{e:bijection}
\langle\alpha,\ch\alpha\rangle=2,\,
s_\alpha(\Delta)=\Delta,\, s_{\ch\alpha}(\ch\Delta)=\ch\Delta.
\end{equation}

Suppose we are given $X,\ch X$ and finite subsets  $\Delta\subset X$ and
$\ch\Delta\subset\ch X$. 
By \cite[Lemma VI.1.1]{bourbaki_root_systems} applied to $\Q\langle\Delta\rangle$
and $\Q\langle\ch\Delta\rangle$ there is at most one bijection
$\alpha\mapsto\ch\alpha$ satisfying \eqref{e:bijection}.
Alternatively suppose we are given only a finite subset $\Delta$ of $X$,
satisfying $X\subset\Q\langle\Delta\rangle$. 
By (loc.~cit.) there is at most one subset $\ch\Delta$, and bijection
$\alpha\mapsto\ch\alpha$, satisfying \eqref{e:bijection}.
The condition  $X\subset \Q\langle\Delta\rangle$ holds if and only if the corresponding group is
semisimple.

Suppose $D_i=(X_i,\Delta_i,\ch X_i,\ch\Delta_i)$ ($i=1,2$) are root
data. We say they are isomorphic if there is an isomorphism
$\phi\in\Hom(X_1,X_2)$  satisfying 
$\phi(\Delta_1)=\Delta_2$ and 
$\phi^t(\ch\Delta_2)=\ch\Delta_1$.
Here $\phi^t\in \Hom(\ch X_2,\ch X_1)$ is defined by
\begin{equation}
\label{e:transpose}
\langle\phi(x_1),\ch x_2\rangle_2= \langle x_1,\phi^t(\ch
x_2)\rangle_1\quad (x_1\in X_1,\ch x_2\in X_2^\vee).
\end{equation}

Let $G$ be a connected reductive complex group and choose a
Cartan subgroup $H$ of $G$.
Let $X^*(H),X_*(H)$ be the character and co-character lattices of $H$
respectively.
We have canonical isomorphisms
\begin{equation}
\label{e:XH}
\h\simeq X_*(H)\otimes_\Z\C,\quad
\h^*\simeq X^*(H)\otimes_\Z\C
\end{equation}
where $\h$ is the Lie algebra of $H$ and $\h^*=\Hom(\h,\C)$.
(If $G$ has rank $n$ then $H\simeq (\C^\times)^n$,  $X_*(H)\simeq\Z^n$
and $\h\simeq\C^n$.) 
Let $\Delta=\Delta(G,H)$ be the set of roots of $H$ in $G$, and
$\ch\Delta=\ch\Delta(G,H)$ the corrresponding co-roots. 
Associated to $(G,H)$ is the root datum

\begin{equation}
\label{e:DGH}
D(G,H)=(X^*(H),\Delta,X_*(H),\ch\Delta).
\end{equation}

If $H'$ is another Cartan subgroup then there is an element $g\in G$
so that $gHg\inv=H'$.
Let $D'=(X^*(H'),\Delta(G,H'),X_*(H'),\ch\Delta(G,H'))$ be the
corresponding root datum.
The inverse transpose action on characters induces an isomorphism
$$
(Ad(g)^{t})\inv:X^*(H)\rightarrow X^*(H')
$$
which gives an isomorphism $D(G,H)\simeq D(G,H')$.

Now suppose in addition to $H$ we have chosen a Borel subgroup $B$
containing $H$. Let 
$\Delta^+$ be the corresponding set of positive roots of $\Delta$, with
simple roots $\Pi$.
Then $\Pi^{\vee}=\{\ch\alpha\,|\, \alpha\in\Pi\}$ is a set of
simple roots of $\ch\Delta$, and 

\begin{equation}
\label{e:DbGBH}
D_b(G,B,H)=(X,\Pi,\ch X,\Pi^{\vee})
\end{equation}
is a  {\it based root datum}.
If $H'\subset B'$ are another Cartan and Borel subgroup then
there is a {\it canonical} isomorphism  $D_b(G,B,H)\simeq D_b(G,B',H')$.

Each root datum is the root datum of a reductive
algebraic group, which is determined uniquely up to isomorphism, and
the same holds for based root data.

Note that a connected reductive complex group $G$ of rank $n$ is determined by
a small finite set of data: two sets (of order the semisimple rank of
$G$)
of integral $n$-vectors, subject only
to condition \eqref{e:bijection}, which may be expressed by 
requiring that the matrix of dot products is a Cartan matrix.

\begin{exampleplain}
Suppose $G$ is of rank $2$ and semisimple rank $1$.
Then a root datum is given by an ordered  pair  $(v,w)$ with $v,w\in \Z^2$,
satisfying $v\cdot~w=2$. Equivalence is given by the action of
$GL(2,\Z)$, where $g\cdot(v,w)=(gv,{}^tg\inv w)$.
It is an
interesting exercise to see that there are precisely three such pairs,
up to equivalence: $((2,0),(1,0)),((1,0),(2,0))$ and $((1,1),(1,1))$,
corresponding to $SL(2,\C)\times \C^\times, PGL(2,\C)\times \C^\times$
and $GL(2,\C)$,
respectively.

\end{exampleplain}

If $D=(X,\Delta,\ch X,\ch\Delta)$ is a root datum then the dual root
datum is defined to be $\ch D=(\ch X,\ch\Delta,X,\Delta)$. 
Given $G$ with root datum $D=(X,\Delta,\ch X,\ch\Delta)$
the {\it dual group} is the group $\ch G$ defined by $\ch D$.
We define 
duality of based root  data similarly.

\subsec{Automorphisms}
\label{s:automorphisms}
There is an exact sequence
\begin{equation}
\label{e:exact}
\exact {\Int(G)}{\Aut(G)}{\Out(G)}
\end{equation}
where $\Int(G)$ is the group of inner automorphisms of $G$,
$\Aut(G)$ is the group of (holomorphic)  automorphisms of $G$, 
and $\Out(G)\simeq\Aut(G)/\Int(G)$ is the group of outer
automorphisms.

A {\it splitting datum} or {\it pinning} for $G$ is a set
$(B,H,\{X_\alpha\})$
where $B$ is a Borel subgroup, $H$ is a Cartan subgroup contained in
$B$ and 
$\{X_\alpha\}$ is a set of  root vectors for the simple roots of $H$
in
$B$.

\begin{definition}
\label{d:distinguished}  
An involution of $G$ is said to be {\it distinguished } if it
preserves a  splitting datum.
\end{definition}
\begin{subequations}
\renewcommand{\theequation}{\theparentequation)(\alph{equation}}  

For example an inner involution is distinguished if and only if it is
the identity; this is the Cartan involution of  the compact real form.  More
generally, among all of the involutions mapping to a fixed involution
in $\Out(G)$, a distinguished involution 
makes as many roots as possible imaginary and
compact (cf.~Section \ref{s:waction}); it is the Cartan involution of a ``maximally compact''
real form of $G$ in the given inner class (cf.~Definition \ref{d:inner}).

The group $\Int(G)$ acts simply transitively on the set of splitting
data. Given a splitting datum $(B,H,\{X_\alpha\})$ this gives an isomorphism 
\begin{equation}
\label{e:phiS}
\phi_S:\Out(G)\simeq \Stab_{\Aut(G)}(S)\subset \Aut(G)
\end{equation}
and this is a splitting of the exact sequence
\eqref{e:exact}.
We call a  splitting  {\it distinguished} if 
it fixes a splitting datum.
We obtain isomorphisms
\begin{equation}
\label{e:out}
\Out(G)\simeq \Aut(D_b)\simeq \Aut(D)/W.
\end{equation}
If $G$ is semisimple then there is an injection of
$\Out(G)$ into the automorphism group of the Dynkin diagram of $G$; 
if $G$ is semisimple and simply connected or adjoint then this is an isomorphism.

\end{subequations}

If $\tau\in \Aut(D)$ then $-\tau^t\in \Aut(\ch D)$ 
(cf.~\ref{e:transpose}).
Now suppose $\tau\in \Aut(D_b)$. 
While  $-\tau^t$ is probably not 
in $\Aut(\ch D_b)$, if we let $w_0$ be the long element of the Weyl
group we have $-w_0\tau^t\in \Aut(\ch D_b)$.

\begin{definition}
\label{d:chtau}
Suppose $\tau\in \Aut(D_b)$. Let $\ch\tau=-w_0\tau^t\in \Aut(\ch
D_b)$. This defines a bijection  $\Aut(D_b)\overset{1-1}\longleftrightarrow \Aut(\ch D_b)$. 
By \eqref{e:out} we obtain a bijection $\Out(G)\leftrightarrow
\Out(\ch G)$ by composition:
\begin{equation}
\Out(G)\leftrightarrow\Aut(D_b)\leftrightarrow \Aut(\ch D_b)\leftrightarrow\Out(\ch G).
\end{equation}
For $\gamma\in\Out(G)$ we write $\ch\gamma$ for the corresponding
element of $\Out(\ch G)$. The map $\gamma\mapsto\ch\gamma$ is a
bijection of sets.
\end{definition}

\begin{remarkplain}
This is not necessarily an isomorphism of groups. For example the
identity goes to the image of $-w_0$ in $\Out(G)$, which is the
identity if and only $G$ is semisimple and $-1\in W$.
\end{remarkplain}

\begin{exampleplain}
Let $G=PGL(n)$ $(n\ge 3)$. Then $\chG=SL(n)$ and $\Out(G)\simeq
\Out(\chG)\simeq\Ztwo$. If $\gamma=1\in\Out(G)$ then $\ch\gamma$ is
the non-trivial element of $\Out(\chG)$.
It is represented by the automorphism $\ch\tau(g)={}^tg\inv$ of
$\chG$.
\end{exampleplain}

\sec{Involutions of Reductive Groups}
\label{s:involutions}

Fix a connected reductive complex group $G$. A real form of $G$ is a
subgroup $G(\R)$ which is the fixed points of an {\it antiholomorphic}
involution of $G$. Two such real forms are said to be equivalent if
they are conjugate by $G$.  The Cartan involution provides a
description of real forms in terms of {\it holomorphic} involutions
which is better suited to our purposes. We illustrate this with an
example. 

\begin{exampleplain}
Let $G=GL(n,\C)$. The  group $GL(n,\R)$ is a real form of
$GL(n,\C)$; it is the fixed points of the antiholomorphic
involution $\sigma(g)=\overline g$. The orthogonal group $O(n)$ is a
maximal compact subgroup of $GL(n,\R)$; it is the fixed points of the
involution $\theta(g)={}^tg\inv$ of $GL(n,\R)$. This involution
extends to a  holomorphic involution of $GL(n,\C)$, with fixed points $O(n,\C)$.
\end{exampleplain}

Suppose $\sigma$ is an antiholomorphic involution of $G$, with fixed
points $G(\R)$.
Then there is a holomorphic involution $\theta$ of $G$ such that
$G(\R)^\theta$ is a maximal compact subgroup of $G(\R)$.
It turns out this gives a bijection on the
level of $G$-conjugacy classes.
See \cite[Theorem 5.1.4]{ov}, \cite{av1} and 
\cite[Chapter X, Section 1]{helgason_book}.

\begin{theorem}
\label{t:realforms}
The map $\sigma\mapsto\theta$ gives a bijection between
$G$-conjugacy classes of antiholomorphic involutions of $G$ and 
$G$-conjugay classes of holomorphic involutions of $G$.
\end{theorem}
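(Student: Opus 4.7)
The plan is to pivot through a fixed compact real form. Choose once and for all a compact real form $U\subset G$ with associated antiholomorphic involution $\sigma_U$; existence is classical (via, e.g., Weyl's unitary trick). The key technical input is the conjugacy lemma of Cartan: any involution $\iota$ of $G$, holomorphic or antiholomorphic, is $G$-conjugate to one commuting with $\sigma_U$, and such a commuting representative is unique up to conjugation by $U$. I would prove this via a convexity argument on the Riemannian symmetric space $G/U$ (equivalently, on the space of compact real forms of $G$): $\iota$ acts by an appropriate isometry, the associated displacement function is convex and proper, and its minimum set is a single $U$-orbit at which $\iota$ stabilizes the corresponding conjugate of $U$.

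Granted this lemma, I define the forward map as follows. Given an antiholomorphic involution $\sigma$, replace it by a $G$-conjugate commuting with $\sigma_U$, and set $\theta := \sigma\sigma_U$. The product of two commuting antiholomorphic involutions is holomorphic, and $\theta^2 = \sigma^2\sigma_U^2 = 1$, so $\theta$ is a holomorphic involution. Symmetrically, for $\theta$ holomorphic, pass to a commuting $G$-conjugate and set $\sigma := \theta\sigma_U$, which is antiholomorphic and involutive by the same computation. On commuting representatives the two constructions are evidently mutually inverse because $\sigma_U^2 = 1$.

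The main obstacle is well-definedness on $G$-conjugacy classes, which is precisely what the uniqueness part of Cartan's lemma provides. Concretely, if $g_1\sigma g_1^{-1}$ and $g_2\sigma g_2^{-1}$ both commute with $\sigma_U$, uniqueness gives $u\in U$ with $\Int(u)(g_1\sigma g_1^{-1}) = g_2\sigma g_2^{-1}$; since $\Int(u)$ fixes $\sigma_U$ (as $u\in G^{\sigma_U}$), applying $\Int(u)$ to $\theta_1 = (g_1\sigma g_1^{-1})\sigma_U$ yields $\theta_2 = (g_2\sigma g_2^{-1})\sigma_U$. Hence $\theta_1$ and $\theta_2$ are $G$-conjugate, and the map descends to conjugacy classes; its inverse is the symmetric construction, completing the proof.
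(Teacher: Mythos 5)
Your proof is correct and is essentially the argument the paper relies on: Theorem \ref{t:realforms} is not proved in the text but referred to \cite[Theorem 5.1.4]{ov}, \cite{av1} and \cite[Chapter X, Section 1]{helgason_book}, whose proofs pivot through a fixed compact real form exactly as you do (your version even has the mild advantage of being phrased for $G$-conjugacy, as in the statement here, rather than $\Aut(G)$-conjugacy as in \cite{ov}). The one imprecision is in your sketch of the uniqueness half of Cartan's lemma: the minimum (i.e.\ fixed-point) set of $\iota$ on the space of compact real forms is a single orbit of $(G^{\iota})^{0}$, not of $U$, and passing from that to your ``unique up to conjugation by $U$'' statement requires the short extra translation that if $g_1\iota g_1^{-1}$ and $g_2\iota g_2^{-1}$ both commute with $\sigma_U$, then after adjusting by an element of the fixed group of $g_1\iota g_1^{-1}$ the ratio lies in $N_G(U)$, which acts on involutions through $U$ because conjugation by central elements is trivial.
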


Using this result we classify real forms in terms of {\it holomorphic}
involutions. We also prefer to incorporate the notion of equivalence
in the definition of real forms. 

\begin{definition}
\label{d:involution}
Let $G$ be a connected reductive complex group.
An involution of $G$ is an involution in $\Aut(G)$, i.e. a holomorphic
automorphism $\theta$ of $G$ 
satisfying $\theta^2=1$.   A {\it real form} of $G$ is a $G$-conjugacy
class of involutions.
\end{definition}

Our definition of equivalence of real forms differs from the usual one
in one subtle way: it only allows
conjugacy by $G$, rather than all of $\Aut(G)$.
The theorem also holds with $G$-conjugacy 
replaced by conjugacy by $\Aut(G)$, and this is how it is
usually stated (for example \cite[Theorem 5.1.4]{ov}).

\begin{exampleplain}
\label{ex:sl2C}
We consider the case of $SL(2,\C)$.
Up to conjugacy there are two antiholomorphic
involutions of $SL(2,\C)$: $\sigma^s(g)=\overline g$ and $\sigma^c(g)={^t\overline
  g\inv}$. These are the two real forms  $G(\R)=SL(2,\R)$ (split) and
$G(\R)=SU(2)$ (compact) of $SL(2,\C)$, respectively. 

Equivalently $SL(2,\C)$ has two equivalence classes of holomorphic
involutions. Let $\theta^s(g)=tgt\inv$ where $t=\diag(i,-i)$.
Then $K(\C)=G^{\theta^s}=\C^\times$, so $K(\R)=S^1$, the maximal compact
subgroup of the corresponding real form $SL(2,\R)$. On the other hand let $\theta^c(g)=g$, so
$K(\C)=SL(2,\C)$, $K(\R)=SU(2)$, and the corresponding real form is $SU(2)$.
\end{exampleplain}

\begin{remarkplain}
\label{r:oldrealforms}
By Theorem \ref{t:realforms} there is a bijection between 
{\it equivalence classes of real forms} in the usual sense of
$G(\R)$ and antiholomorphic involutions, and {\it real forms} in the sense of
Definition \ref{d:involution}.
We work almost entirely with the latter notion; in the few
places we refer to the former the distinction will be
clear since we will refer to a real form $G(\R)$.
\end{remarkplain}

\begin{definition}
\label{d:inner}
An involution  $\theta\in \Aut(G)$ 
(Definition \ref{d:involution})
is in the inner class of $\gamma\in
\Out(G)$ if $\theta$ maps to $\gamma$ in the exact sequence
\eqref{e:exact}. 
If  $\theta,\theta'$ are involutions of $G$ we say $\theta$ is inner
to $\theta'$ if $\theta$ and $\theta'$ have the 
same image in $\Out(G)$.
\end{definition}

This corresponds to the usual notion of inner form \cite[12.3.7]{springer_book}.

\begin{remarkplain}
\label{r:realform}
The results of \cite{abv} are stated in terms of 
antiholomorphic, 
rather than holomorphic, involutions. See Remark
\ref{r:stronginvolutions}.
\end{remarkplain}

\begin{remarkplain}
As discussed at the beginning of Section \ref{s:groups}, this is the
one situation in which we need complex, as opposed to algebraic,
groups. Once we are in the setting of 
Cartan involutions, we could replace $G$ with the underlying algebraic group
$\mathbb G$, and holomorphic involutions of $G$ with {\it algebraic}
involutions of $\mathbb G$. 
This is the setting of \cite{richardson_springer}.
\end{remarkplain}

\sec{Basic Data}
\label{s:basic}

The setting for our mathematical questions will be:

\begin{enumerate}
\item A connected reductive complex group $G$,
\item An involution $\gamma\in\Out(G)$.
\end{enumerate}

We refer to $(G,\gamma)$ as {\it basic data}.
We say $(G,\gamma)$ is equivalent to $(G',\gamma')$ if there is an
isomorphism $\phi:G\rightarrow G'$ such that
$\gamma'\circ\phi=\phi\circ\gamma$.

On the other hand the software works entirely in the setting of based root
data. Given $(G,\gamma)$, let $(B,H,\{X_\alpha\})$ be a splitting
datum (Section  \ref{s:automorphisms}).
From these we obtain:

\begin{enumerate}
\item[(a)] A based root datum $D_b=D_b(G,B,H)$,
\item[(b)] An involution $\gamma$ of $D_b$  (cf.~\eqref{e:out}).
\end{enumerate}
The based root datum $D_b$ and its involution are independent of the
choice of splitting datum, up to canonical isomorphism. Conversely,
given a based root datum $D_b$ with an involution $\gamma$ we can
construct $(G,\gamma)$, uniquely determined up to isomorphism. 

Given basic data $(G,\gamma)$, choose a splitting datum
$(B,H,\{X_\alpha\})$. 
A key to our algorithm is that $(B,H,\{X_\alpha\})$ is fixed once and
for all.
This enables us to do all of our constructions on a fixed Cartan
subgroup. Let $W=W(G,H)$ be the Weyl group. 

The weight lattice for $G$ is
\begin{equation}
\label{e:P}
P=\{\lambda\in X^*(H)\otimes\C\,|\,
\langle\lambda,\ch\alpha\rangle\in\Z\text{ for all }\alpha\in\Delta\}
\end{equation}
and dually the co-weight lattice for $G$ is
\begin{subequations}
\renewcommand{\theequation}{\theparentequation)(\alph{equation}}  
\begin{equation}
\label{e:chP}
\ch P=\{\ch\lambda\in X_*(H)\otimes\C\,|\, \langle\alpha,\ch\lambda\rangle\in\Z\text{
for all }\alpha\in\Delta\}.
\end{equation}
These are actually lattices 
only if $G$ is semisimple.
We may identify $2\pi i X_*(H)$ with the kernel of $\exp:\h\rightarrow
H(\C)$. Under this identification
\begin{equation}
\label{e:under}
\ch P=\{\ch\lambda\in\h\,|\, \exp(2\pi i\ch\lambda)\in Z(G)\}.
\end{equation}
We write $P(G,H)$ and  $\ch P(G,H)$ to indicate the dependence on $G$
and $H$.

We also define
\begin{equation}
\label{e:Preg}
P_{\text{reg}}=\{\lambda\in P\,|\,
\langle\lambda,\ch\alpha\rangle\ne0\text{ for all }\alpha\in\Delta\}
\end{equation}
and
\begin{equation}
\label{e:chPreg}
\chPreg=\{\ch\lambda\in \ch P\,|\,
\langle\alpha,\ch\lambda\rangle\ne0\text{ for all }\alpha\in\Delta\}.
\end{equation}
\end{subequations}

By duality we obtain the dual based root datum $\ch D_b$, 
an involution  $\ch\gamma$ of $D_b^\vee$
(cf.~Definition \ref{d:chtau}),  the dual group $\ch G$, and 
thus {\it dual}  basic data $(\ch G,\ch \gamma)$.
In particular $\ch G$ comes equipped with fixed Cartan and Borel
subgroups $\ch H\subset \ch B$.
We have $X^*(H)=X=X_*(\ch H)$ and $X_*(H)=\ch X=X^*(\ch H)$. These are
canonical identifications.
By \eqref{e:XH} applied to $H$ and $\ch H$ these induce
identifications 
$\h=\h^{\vee*}$ and $\h^*=\h^{\vee}$.
Note that $P(G,H)=\ch P(\ch G,\ch H)$, and 
by \eqref{e:under} we have
\begin{equation}
\label{e:pgh}
P(G,H)=\{\lambda\in \ch\h\,|\, \exp(2\pi i\lambda)\in
Z(\ch G)\}.
\end{equation}

We are also given the bijection
$\Delta=\Delta(G,H)\ni\alpha\rightarrow \ch\alpha\in
\ch\Delta=\Delta(\ch G,\ch H)$.
We identify $W(G,H)$ with $W(\ch G,\ch H)$ by the map 
$W(G,H)\ni w\rightarrow w^t\in W(\ch G,\ch H)$
(cf.~\eqref{e:transpose});
equivalently, $s_\alpha\rightarrow s_{\ch\alpha}$. 

\sec{Extended Groups and Strong Real Forms}
\label{s:extended}
Fix basic data $(G,\gamma)$ as in Section \ref{s:basic}, and a
splitting datum $(H,B,\{X_\alpha\})$ (cf.~Section
\ref{s:automorphisms}). 
This gives us a
distinguished splitting of the exact sequence \eqref{e:exact}, taking $\gamma$ to a
distinguished involution of  $G$ (Definition \ref{d:distinguished})
which we also denote $\gamma$.  Let
$\Gamma=\{1,\sigma\}$ be the Galois group of $\C/\R$.

\begin{definition}
\label{d:extended}
The extended group for $(G,\gamma)$  is the semi-direct product
\begin{equation}
\GGamma=G\rtimes\Gamma
\end{equation}
where $\sigma\in\Gamma$ acts on $G$ by the distinguished involution
$\gamma$.

We say the element $1\times\sigma$ is distinguished, and denote it $\delta$.
\end{definition}

Recall $\gamma$ is a Cartan involution of a maximally compact form in this inner class.

\begin{exampleplain}
\label{e:equalrank}
Suppose $\gamma=1$, so $\GGamma=G\times\Gamma$.  Note that $\gamma$
is the Cartan involution of the compact real form of $G$, and 
this is the ``equal rank'' case. Every real form $\GR$ in this inner
class contains a compact Cartan subgroup,
and every  involution in this inner class is contained in $\Int(G)$.

In particular suppose $G$ is semisimple and the Dynkin diagram of $G$
has no non-trivial automorphisms. Then $\gamma$ is necessarily
trivial, and every involution of $G$ is inner.
\end{exampleplain}

\begin{exampleplain}
\label{e:slnc}  
Suppose $1\ne \gamma\in \Out(G)$.
The most convenient way to compute the corresponding distinguished
involution $\gamma$ of $G$ (cf.~Definition \ref{d:distinguished})
is to list the real forms in this inner class, and choose the most
compact one.

For example let $G=SL(n,\C)$ ($n\ge 3$). There is a unique outer
automorphism of the Dynkin diagram, corresponding to the 
non-trivial element $\gamma\in\Out(G)$.
If $n$ is odd the only
real form in this inner class is the split group $SL(n,\R)$, and we take
$\gamma(g)={^tg\inv}$, the Cartan involution of $SL(n,\R)$.

If $n$ is even then $G$ has two real forms, $SL(n,\R)$ and
$SL(n/2,\H)$ where $\H$ is the quaternion algebra.
We  let $\gamma$ act by a Cartan involution of
$SL(n/2,\H)$.
\end{exampleplain}

The  extended group $\GGamma$ encapsulates all of the real
forms of $G$ in the inner class defined by $\gamma$.
That is if $\xi\in \GGamma\bs G$ satisfies $\xi^2\in Z(G)$  then
$\int(\xi)$ is an involution in the inner class of $\gamma$.
Conversely, if $\theta$ is in the inner class of
$\gamma$, then there is an element $\xi\in\GGamma\bs G$
with $\xi^2\in Z(G)$ and $\theta=\int(\xi)$.
For the algorithm it is important to keep track of $\xi$, and not just
$\theta=\int(\xi)$.

\begin{definition}
\label{d:stronginvolution}
A strong involution of $G$ in the inner class of $\gamma$
is an element $\xi\in \GGamma\bs G$ such that
$\xi^2\in Z(G)$. 
The set of such strong involutions is denoted
$\I(G,\gamma)$.
We define a  {\it strong real form} of $G$ in the inner class of
$\gamma$ to be a $G$-conjugacy class
of strong involutions.

For $\xi\in\I(G,\gamma)$ let $\theta_\xi=\int(\xi)$ and
$K_\xi=\Stab_G(\xi)=G^{\theta_\xi}$. 
\end{definition}

If $\gamma$ is understood we refer to strong involutions and strong
real forms of $G$.

\begin{exampleplain}
\label{ex:sl2_1}
Recall (Example \ref{ex:sl2C}) $SL(2,\C)$ has two real forms,
$\theta^c=1$ (i.e. $SU(2)$) and 
$\theta^s=\int(\diag(i,-i))$ (i.e. $SL(2,\R)=SU(1,1))$. 
However 
$SL(2,\C)$ has {\it three} strong real forms, i.e. 
conjugacy classes of 
{\it strong} involutions:
$\xi=I,-I$ and $\diag(i,-i)$.
(Here and elsewhere, when $\gamma=1$, we write $\xi$ for the element
$(\xi,\sigma)\in G^\Gamma\bs G$.)
Then 
$\theta_\xi=\theta^c$ if $\xi=\pm I$, or $\theta_\xi=\theta^s$ if $\xi=\diag(i,-i)$. 
We can think of these strong real forms as $SU(2,0), SU(0,2)$ and
$SU(1,1)\simeq SL(2,\R)$, respectively.

Now consider $PSL(2,\C)\simeq SO(3,\C)$. This has two real forms: the
compact group $SO(3,\R)$ (with $K(\R)=SO(3,\R)$, $K(\C)=SO(3,\C)$) and
the split one $SO(2,1)$ (with $K(\R)=O(2)$, $K(\C)=O(2,\C)$).
Up to conjugacy there are two strong involutions: $I$ and
$\diag(-1,-1,1)$.
\end{exampleplain}

The next Lemma is immediate from the definitions 
(cf.~Definition \ref{d:inner}).

\begin{lemma}
We have
$$
\I(G,\gamma)/G=
\{\text{strong real forms in the inner class of }\gamma\}.
$$
The map $\I(G,\gamma)\ni \xi\mapsto \theta_\xi$
is a surjection from $\I(G,\gamma)$ to the set of involutions
in the inner class of $\gamma$. It factors to a surjection
$$
\I(G,\gamma)/G\twoheadrightarrow
\{\text{real forms of $G$ in the inner class of $\gamma$}\}.
$$
If $G$ is adjoint this is a bijection.
\end{lemma}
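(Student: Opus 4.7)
The first equality is just Definition \ref{d:stronginvolution}, so there is nothing to prove there. For the rest my plan is to analyze the map $\int\colon G^\Gamma\to\Aut(G)$ restricted to the coset $G^\Gamma\setminus G$, together with the squaring map, and translate everything through it.

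First I would check the second assertion, that $\xi\mapsto\theta_\xi$ lands in involutions in the inner class of $\gamma$ and is surjective. Well-definedness is immediate: any $\xi\in G^\Gamma\setminus G$ has the form $g\delta$ and so maps to $\gamma$ in $\Out(G)$, while $\xi^2\in Z(G)$ implies $\theta_\xi^2=\int(\xi^2)=1$. For surjectivity, let $\theta$ be any involution of $G$ in the inner class of $\gamma$. Because $\gamma\in\Out(G)$ is realized by the distinguished involution $\int(\delta)$, and because $\Int(G)$ acts transitively on the set of lifts of $\gamma$ to $\Aut(G)$, we may write $\theta=\int(\xi)$ with $\xi=g\delta$ for some $g\in G$. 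Then $\xi^2\in G$ and $\int(\xi^2)=\theta^2=1$, so $\xi^2$ centralises $G$ and hence lies in $Z(G)$. Thus $\xi\in\I(G,\gamma)$ and $\theta_\xi=\theta$.

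Next I would check that the map factors to $\I(G,\gamma)/G$ and surjects onto real forms in the inner class. If $\xi'=g\xi g\inv$ with $g\in G$, then $\theta_{\xi'}=\int(g)\theta_\xi\int(g)\inv$, so $G$-conjugate strong involutions produce $G$-conjugate involutions of $G$, i.e.\ the same real form (Definition \ref{d:involution}). Combined with the surjectivity in the previous step, this gives the surjection $\I(G,\gamma)/G\twoheadrightarrow\{\text{real forms in the inner class of }\gamma\}$.

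Finally, for the adjoint case, I would prove injectivity. When $G$ is adjoint, $Z(G)=1$, and I would argue that the unreduced map $\xi\mapsto\theta_\xi$ is already injective on $\I(G,\gamma)$: if $\theta_{\xi_1}=\theta_{\xi_2}$, then $\xi_1\xi_2\inv\in G$ centralises $G$, so $\xi_1\xi_2\inv\in Z(G)=1$ and $\xi_1=\xi_2$. Combining this with the factoring observation, if $\theta_{\xi_1}$ and $\theta_{\xi_2}$ are $G$-conjugate, say $\theta_{\xi_2}=\int(g)\theta_{\xi_1}\int(g)\inv=\theta_{g\xi_1 g\inv}$, injectivity gives $\xi_2=g\xi_1 g\inv$, so $\xi_1$ and $\xi_2$ are $G$-conjugate. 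The main thing to watch is this clean interplay between $Z(G)=1$, the triviality of the kernel of $\int$, and the compatibility of the squaring and conjugation maps; otherwise the proof is a direct unwinding of definitions.
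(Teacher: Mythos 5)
Your proof is correct, and it is exactly the unwinding of definitions that the paper has in mind: the paper offers no argument at all, stating only that the lemma "is immediate from the definitions." Each of your steps (surjectivity of $\xi\mapsto\theta_\xi$ via transitivity of $\Int(G)$ on lifts of $\gamma$, compatibility with conjugation, and injectivity from $Z(G)=1$ in the adjoint case) is sound.
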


\begin{exampleplain}
In Example \ref{ex:sl2_1} the map from strong real forms to real
forms is bijective for  $PSL(2)$.
If $G=SL(2)$ the fiber of the map is a single element for the split
real form $SU(1,1)$, and two elements ($SU(2,0)$ and $SU(0,2)$, so to
speak) for the compact real form $SU(2)$.
\end{exampleplain}

\begin{exampleplain}
\label{ex:equalrank}
This is generalization of  Example \ref{ex:sl2_1}.
Suppose $\gamma$ is the identity.
The  strong real forms of $G$ are parametrized by 
\begin{subequations}
\renewcommand{\theequation}{\theparentequation)(\alph{equation}}  
\begin{equation}
\{\xi\in G\,|\,\xi^2\in Z(G)\}/G.
\end{equation}
(As in Example \ref{ex:sl2_1} since $\gamma=1$ we write $\xi$
instead of $(\xi,\sigma)$.)
Every strong involution $\xi$ is conjugate to an element of $H$ so
this set is the same as
\begin{equation}
\{\xi \in H\,|\, \xi^2\in Z(G)\}/W
=(\frac12\ch P/X_*(H))/W
\end{equation}
where $\ch P$ is the coweight lattice  
(cf.~\eqref{e:chP}).
In particular  if $G$ is adjoint the real
forms of $G$ are parametrized by 
\begin{equation}
(\frac12\ch P/\ch P)/W.
\end{equation}
\end{subequations}
\end{exampleplain}

\begin{exampleplain}
Continuing with the preceding example, let $G=PSp(2n)$. 
In the usual coordinates $\ch P\simeq\Z^n\cup(\Z+\frac12)^n$, and for 
representatives of 
$(\frac12\ch P/\ch P)/W$ we may take
\begin{equation}
\frac14(1,\dots, 1)
\end{equation}
corresponding to $PSp(2n,\R)$ (the split group) and
\begin{equation}
\frac12(\overbrace{1,\dots,1}^p,\overbrace{0,\dots, 0}^q)\quad (p\le [n/2])
\end{equation}
corresponding to $PSp(p,q)$. 

For $G=Sp(2n)$ we have $X_*(H)=\ch R\simeq\Z^n$, and the strong real forms
are parametrized by 
\begin{equation}
(\frac12[\Z^n\cup(\Z+\frac12)^n]/\Z^n)/W.
\end{equation}
For representatives we may take $\frac14(1,\dots, 1)$ 
and 
$\frac12(\overbrace{1,\dots,1}^p,\overbrace{0,\dots, 0}^q)$ as before,
where now $0\le p\le n$. Thus there are two strong real forms mapping to each
real form $Sp(p,q)$ for $p\ne q$.

See \cite[Example 1-17]{bowdoin}.
  
\end{exampleplain}

\begin{subequations}
\renewcommand{\theequation}{\theparentequation)(\alph{equation}}  
\label{e:I}
We will make frequent use of the following construction.
Choose a set of representatives  $\{\xi_i\,|\, i\in I\}$ of the set of
strong real forms. That is 
\begin{equation}
\label{e:I1}
\{\xi_i\,|\,i\in I\}\simeq\I(G,\gamma)/G.
\end{equation}
If $G$ is semisimple (in fact if $Z(G)^\Gamma$ is finite,
cf.~\eqref{p:fibers}(3))
this
is a finite set.
For $i\in I$ let 
\begin{equation}
\label{e:I2}
\theta_i=\int(\xi_i),\quad K_i=G^{\theta_i}
\end{equation}
and
\begin{equation}
\I_i=\{\xi\in \I(G,\gamma)\,|\,\xi \text{ is $G$-conjugate to }\xi_i\}.
\end{equation}
The stabilizer of $\xi_i$ in $G$ is $K_i$, so $\I_i\simeq G/K_i$, and
we have
\begin{equation}
\label{e:IG}
\I(G,\gamma)\simeq\coprod_{i\in I}G/K_i.
\end{equation}

\end{subequations}

Recall the distinguished involution $\theta_\delta=\int(\delta)$
is the Cartan involution of a maximally compact real
form in the inner class of $\gamma$.
It is helpful to also invoke the most split form in this inner class.
We say an involution of $G$ is {\it quasisplit} if it is the Cartan
involution of a quasisplit real form of $G$. For  a characterization of quasisplit
involutions see \cite[Proposition 6.24]{av1} (where they are called
{\it principal}).
By \cite[Theorem 6.14]{av1} there is a unique conjugacy class of quasisplit involutions
in each inner class. 

We emphasize the symmetry of the situation by summarizing this:

\begin{lemma}
\label{l:dandq}
Let  $\GGamma$ be the  extended group for $(G,\gamma)$.
\hfil\break
\noindent (1) There exists a strong involution $\xi\in \GGamma$ so that
$\theta_\xi$ is distinguished. 
The involution $\theta_\xi$ is unique up to
conjugation by $G$.

\noindent (2)
There exists a strong involution $\eta$
so that $\theta_\eta$ is quasisplit.
The involution $\theta_\eta$ is unique up to
conjugation by $G$.
\end{lemma}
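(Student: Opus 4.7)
The plan is to treat the two parts asymmetrically: (1) is essentially tautological from the way the extended group was set up, while (2) reduces quickly to the cited uniqueness of quasisplit real forms \cite[Theorem 6.14]{av1}. There is no serious obstacle; the only subtlety is that the lemma asserts uniqueness of the involution $\theta_\xi$ of $G$, not of the strong involution $\xi$ itself, which saves us from having to track behavior in the center.

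For (1), I would simply take $\xi = \delta = 1\times\sigma$. Then $\xi^2 = 1 \in Z(G)$, so $\xi \in \I(G,\gamma)$, and by construction of $\GGamma$ the automorphism $\theta_\xi = \int(\delta)$ acts on $G$ as the distinguished involution associated to $\gamma$ via the splitting $\phi_S$ of \eqref{e:phiS}; in particular it preserves the fixed pinning $(B,H,\{X_\alpha\})$. For uniqueness, suppose $\theta'$ is another distinguished involution in the inner class of $\gamma$, preserving some pinning $S'$. Since $\Int(G)$ acts simply transitively on pinnings, there is $g \in G$ with $g\cdot S = S'$, and after replacing $\theta'$ by $\int(g)^{-1}\theta'\int(g)$ we may assume $\theta'$ preserves $S$. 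But then both $\theta'$ and $\theta_\delta$ lie in $\Stab_{\Aut(G)}(S)$ and have the same image $\gamma$ in $\Out(G)$; since $\phi_S$ is a bijection onto this stabilizer, they coincide.

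For (2), existence of a quasisplit involution $\theta$ in the inner class of $\gamma$, and its uniqueness up to $G$-conjugacy, is exactly the content of \cite[Theorem 6.14]{av1}. It remains to exhibit a strong involution $\eta$ with $\theta_\eta = \theta$. This is supplied by the observation made just before Definition \ref{d:stronginvolution}: every involution in the inner class of $\gamma$ arises as $\int(\xi)$ for some $\xi \in \GGamma \setminus G$ with $\xi^2 \in Z(G)$. Applying this to $\theta$ produces the desired $\eta$, and uniqueness of $\theta_\eta$ up to $G$-conjugacy is then immediate from the cited theorem.

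If I wanted to make the argument fully self-contained rather than appeal to \cite{av1}, the harder half would be existence of a quasisplit involution in a prescribed inner class; that would be the only nontrivial step, and I would handle it by starting from $\theta_\delta$ and performing Cayley transforms through noncompact imaginary simple roots until no such roots remain, using the characterization of quasisplit involutions in terms of the absence of noncompact imaginary roots (cf.~\cite[Proposition 6.24]{av1}).
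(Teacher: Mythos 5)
Your proposal is correct and follows the same route the paper takes: the paper offers no separate proof, presenting the lemma as a summary of the preceding discussion, where (1) comes from the construction of $\GGamma$ via a fixed splitting datum (with uniqueness from the simple transitivity of $\Int(G)$ on splitting data), and (2) is the cited existence and uniqueness of quasisplit involutions from \cite[Theorem 6.14]{av1} combined with the observation that every involution in the inner class of $\gamma$ is of the form $\int(\xi)$ for a strong involution $\xi$. Your write-up merely makes explicit the details the paper leaves implicit.
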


\begin{remarkplain}
\label{r:lgroup}
The extended group $\GGamma$  in \cite[Definition 9.6]{av1} is
defined in terms of a quasisplit involution, rather than a
distinguished one. The equivalence of the two definitions is the
content of \cite[9.7]{av1}.  
This discussion also shows that, applied to $(\chG,\ch\gamma)$, the
group $\chGGamma$ is isomorphic to the L-group of the real forms of
$G$ in the iner class of $\gamma$.
\end{remarkplain}

\begin{remarkplain}
\label{r:stronginvolutions}
Since in \cite{abv} we work with antiholomorphic involutions instead
of holomorphic ones,
(cf.~Remark \ref{r:realform}), 
the extended group $\GGamma$  in \cite[Chapter 3]{abv} is defined in
terms of an antiholomorphic involution.
The results are equivalent, but some translation is necessary between
the two pictures.
\end{remarkplain}

\sec{Harish-Chandra modules for strong real forms}
\label{s:representations}
Fix basic data $(G,\gamma)$ as in Section \ref{s:groups} and let
$\I=\I(G,\gamma)$ be the corresponding set of strong involutions.
For $\xi\in\I$ let $K_\xi=\Stab_G(\xi)$ as usual, and define
$(\g,K_\xi)$ modules as in \cite{green}.

\begin{definition}
\label{d:hc}
A Harish-Chandra module for  a strong involution is a pair 
$(\xi,\pi)$ where $\xi\in\I$ and $\pi$ is a $(\g,K_\xi)$-module.
A {\it Harish-Chandra module for a strong real form of $G$} is
the $G$-orbit of a pair $(\xi,\pi)$, where 
$g\cdot(\xi,\pi)=(g\xi g\inv,\pi^g)$. 
\end{definition}

An infinitesimal
character for $\g$ may be identified, via the Harish-Chandra
homomorphism with an orbit of $W$ on $\h^*$.
For $\lambda\in \h^*$ we
write $\chi_\lambda$ for the corresponding infinitesimal character;
note that $\chi_\lambda=\chi_{w\lambda}$ for all $w\in W$.
If $\pi$ is a Harish-Chandra module with infinitesimal character
$\chi_\lambda$ we may simply say $\pi$ has infinitesimal character $\lambda$.

\begin{definition}
\label{d:reginfchar}
We say $\lambda$ and $\chi_\lambda$ are  integral if $\lambda\in P$,
and regular and integral if $\lambda\in P_{\text{reg}}$ (cf.~\eqref{e:Preg}).
\end{definition}

\begin{definition}
\label{d:pig}
Given $\xi\in \I$ let $\Pi(G,\xi)$ be the set of equivalence classes of  irreducible $(\g,K_\xi)$-modules
with regular integral infinitesimal character.
Let 
\begin{equation}
\Pi(G,\gamma)=\{(\xi,\pi)\,|\,\xi \in \I,
\pi\in\Pi(G,\xi)\}/G.
\end{equation}
If $\Lambda$ is a subset of $P_{reg}$ let
$\Pi(G,\gamma,\Lambda)\subset \Pi(G,\gamma)$ be the set of
(equivalence classes of) pairs $(\xi,\pi)$ for which the infinitesimal
character of $\pi$ is an element of $\Lambda$.
\end{definition}

If we fix a set of representatives $I$ of $\I/G$ as in \eqref{e:I} we have
\begin{equation}
\label{e:PiG}
\Pi(G,\gamma)\simeq\coprod_{i\in I}\Pi(G,\xi_i).
\end{equation}
Thus $\Pi(G,\gamma)$ parametrizes  
Harish-Chandra modules for strong real forms of $(G,\gamma)$, with
regular integral infinitesimal character.
With the obvious notation we also have
$\Pi(G,\gamma,\Lambda)=\coprod_{i\in I}\Pi(G,\xi_i,\Lambda)$.

Fix $\xi\in \I$.  As a consequence of \cite{langlands_mimeo}, associated to each
$\chG$-conjugacy class of admissible homomorphisms
$\phi:W_\R\rightarrow \chGGamma$ is an L-packet
$\Pi_\phi(G,\xi)$ (see Section \ref{s:ldata}).
This is a finite set of (equivalence classes of)
$(\g,K_{\xi})$ modules, all having the same infinitesimal character. 
Each L-packet is finite, and non-empty if $\xi$ is quasisplit.  The
non-empty L-packets partition the set of  irreducible Harish-Chandra
modules for the strong involution $\xi$.

We define the {\it large} L-packet of $\phi$ to be the union, over all strong
involutions, of L-packets, modulo our notion of equivalence:
\begin{subequations}
\label{e:large}
\renewcommand{\theequation}{\theparentequation)(\alph{equation}}  
\begin{equation}
\Pi_\phi(G,\gamma)=\{(\xi,\pi)\,|\, \xi\in\I,\pi\in \Pi_\phi(G,\xi)\}/G.
\end{equation}
With $I$ as in \eqref{e:I} we have
\begin{equation}
\Pi_\phi(G,\gamma)\simeq \coprod_{i\in I}\Pi_\phi(G,\xi_i)
\end{equation}
and each set $\Pi_\phi(G,\xi_i)$ is finite.
\end{subequations}

\sec{L-data}
\label{s:ldata}
Fix basic data $(G,\gamma)$, with corresponding 
dual data $(\chG,\ch\gamma)$ (cf.~Section \ref{s:basic}).
Let $\chGGamma$ be the corresponding extended group
(Definition \ref{d:extended}).
Recall (Remark \ref{r:lgroup}) that $\chGGamma$ is isomorphic to the L-group of $G$.
We begin by parametrizing admissible homomorphisms of the Weil group into
$\chGGamma$
\cite{borel_corvallis}.

Let $\weil$ be the Weil group of $\R$. That is
$\weil=\langle\C^\times,j\rangle$ where $jzj\inv=\overline z$ and $j^2=-1$. 
An admissible homomorphism $\phi:\weil\rightarrow \chGGamma$ is a
continuous homomorphism such that $\phi(\C^\times)$ consists of semisimple
elements and $\phi(j)\in \chGGamma\bs \chG$. 
 
Suppose $\phi:\weil\rightarrow \chGGamma$ is an admissible
homomorphism. 
Then $\phi(\C^\times)$ is contained in a Cartan subgroup $\ch H_1(\C)$,
and $\phi(e^z)=\exp(2\pi i(z\lambda+\overline z\nu))$ for some
$\lambda,\nu\in\ch\h_1$.
Choose an inner isomorphism $\ch\h_1\simeq\ch\h$.
As explained in Section \ref{s:basic} we have $\ch\h=\h^*$,
so we may identify $\lambda$ with an element
(which we still call $\lambda$) of $\h^*$, whose $W$
orbit is well-defined.
We define the {\it infinitesimal character} of $\phi$ to be the
$W$-orbit of $\lambda\in\h^*$.

Recall  $P(G,H)=\{\lambda\in\h^*\,|\, \exp(2\pi i\lambda)\in
Z(\ch G)\}$ (cf.~\eqref{e:pgh}).

\begin{definition}
\label{d:onesidedcompleteldatum}
A complete one-sided L-datum for $(\chG,\ch\gamma)$ is a triple $(\eta,\ch B_1,\lambda)$ where  
$\eta$ is a strong involution of $\chG$ (Definition \ref{d:stronginvolution}),
$\ch B_1$ is a Borel subgroup of $\chG$ and  $\lambda\in P(G,H)$ 
satisfies $\exp(2\pi i\lambda)=\eta^2$.
The group $\chG$ acts by conjugation on this data, and let
\begin{equation}
\label{e:caPc}
\caP_c(\chG,\ch\gamma)=\{\text{complete one-sided L-data}\}/\chG.
\end{equation}
\end{definition}

Fix a complete one-sided L-datum
$\ch S_c=(\eta,\ch B_1,\lambda)$.
By  \cite{matsuki} (also see \cite[Lemma 6.18]{abv}) there is a 
$\theta_\eta$-stable Cartan subgroup $\ch H_1$ of $\ch B_1$, unique up to
conjugacy by $\ch K_\eta\cap\ch B_1$.
Choose $g\in\chG$ such that $g\ch Hg\inv=\ch H_1$ and
$\langle\Ad(g)\lambda,\ch\alpha\rangle\ge0$ for all $\alpha\in
\Delta(\ch B_1,\ch H_1)$. Let $\lambda_1=\Ad(g)\lambda\in \ch\h_1$.
Define $\phi:\weil\rightarrow\chGGamma$ by:
\begin{equation}
\label{e:phizj}
\begin{aligned}
\phi(z)&=z^{\lambda_1}\overline z^{\eta\lambda_1}\quad(z\in\C^\times)\\
\phi(j)&=\exp(-\pi i\lambda_1)\eta.
\end{aligned}
\end{equation}
The first statement is shorthand for
$\phi(e^z)=\exp(z\lambda_1+\Ad(\eta)\overline z\lambda_1)$.
It is easy to see $\phi$ is an admissible homomorphism, and the $\chG$
conjugacy class of $\phi$ is independent of the choices of $H_1$ and
$g$.
The next result follows readily. 

\begin{proposition}
\label{p:weil}
The map taking $(\eta,\ch B_1,\lambda)$ to $\phi$ defined by
\eqref{e:phizj} induces 
a bijection between $\caP_c(\chG,\ch\gamma)$ and the set of 
$\chG$-conjugacy classes of  admissible
homomorphisms $\weil\rightarrow \chGGamma$.
\end{proposition}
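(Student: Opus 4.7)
The plan is to verify in four steps that the construction of \eqref{e:phizj} gives a well-defined, bijective correspondence between $\caP_c(\chG,\ch\gamma)$ and $\chG$-conjugacy classes of admissible homomorphisms $\weil\to\chGGamma$.

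First I would check that $\phi$ is actually an admissible homomorphism. Semisimplicity of $\phi(\C^\times)$ is clear since it sits inside the torus $\ch H_1$. The nontrivial relations in $\weil$ reduce to checking $\phi(j)\phi(z)\phi(j)^{-1}=\phi(\bar z)$ and $\phi(j)^2=\phi(-1)$. The first follows from the fact that $\exp(-\pi i\lambda_1)$ is central in $\ch H_1$ while $\Ad(\eta)$ swaps $\lambda_1$ and $\eta\lambda_1$ in the exponent of $\phi(e^z)$; the second is the crucial identity
\[
\phi(j)^2=\exp(-\pi i\lambda_1)\exp(-\pi i\Ad(\eta)\lambda_1)\,\eta^2=\exp\bigl(-\pi i(\lambda_1+\eta\lambda_1)\bigr)\,\eta^2,
\]
which must equal $\phi(-1)=\exp\bigl(i\pi(\lambda_1-\eta\lambda_1)\bigr)$, and this reduces precisely to the hypothesis $\exp(2\pi i\lambda)=\eta^2$ (using that $Z(\chG)$ is pointwise $W$-fixed, so the condition transports from $\ch H$ to $\ch H_1$ via $\Ad(g)$). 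Finally $\phi(j)\notin\chG$ because $\eta\in\chGGamma\smallsetminus\chG$ and $\exp(-\pi i\lambda_1)\in\chG$.

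Next I would check independence of the choices. The Cartan $\ch H_1$ is unique up to conjugation by $\ch K_\eta\cap\ch B_1$; conjugating $\phi$ by such a $k\in\ch K_\eta\cap\ch B_1$ preserves $\eta$ (since $k\in\ch K_\eta$) and transports $\lambda_1$ to $k\lambda_1$, which is exactly what one gets from using $kg$ in place of $g$ for the new Cartan. The residual freedom in $g$ (after fixing $\ch H_1$) lies in right multiplication by $\Norm_{\chG}(\ch H)$ subject to the dominance constraint, hence by the stabilizer $W_\lambda\subset W$; any such choice fixes $\lambda_1$, so $\phi$ is unchanged. Conjugation on the input side $(\eta,\ch B_1,\lambda)\mapsto(h\eta h^{-1},h\ch B_1 h^{-1},\lambda)$ transports into conjugation of $\phi$ by $h$ after the parallel change $g\mapsto hg$, so the map descends to $\caP_c(\chG,\ch\gamma)$.

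For surjectivity, given an admissible $\phi$, I would let $\ch H_1$ be the centralizer of $\phi(\C^\times)^0$ and write $\phi(e^z)=\exp(z\mu+\bar z\nu)$ for commuting $\mu,\nu\in\ch\h_1$. The admissibility relation $\phi(j)\phi(z)\phi(j)^{-1}=\phi(\bar z)$ forces $\Ad(\phi(j))\mu=\nu$. Setting $\eta:=\exp(\pi i\mu)\phi(j)\in\chGGamma\smallsetminus\chG$, a direct computation shows $\eta^2=\exp(2\pi i\mu)$, and this lies in $Z(\chG)$ because $\phi(-1)$ lies in the center of $\phi(\C^\times)$ and satisfies the centrality condition—hence $\eta$ is a strong involution and $\mu\in P(G,H)$ (transported to $\ch\h$). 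I would then pick any Borel $\ch B_1\supset\ch H_1$ for which $\mu$ is dominant; transporting everything back to the fixed Cartan $\ch H$ via a suitable $g$ produces the L-datum.

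The hard step is injectivity, and the delicate point is that \eqref{e:phizj} only sees $\lambda_1$ dominant in $\ch\h_1$, while $\lambda\in P(G,H)$ is, a priori, merely an element of $\ch\h$. If two triples $(\eta,\ch B_1,\lambda)$ and $(\eta',\ch B_1',\lambda')$ give $\chG$-conjugate $\phi$'s, after conjugating I may take $\phi=\phi'$; then $\ch H_1,\eta,\lambda_1$ are determined by $\phi$, so $\eta=\eta'$ and $\Ad(g)\lambda=\Ad(g')\lambda'$. The essential observation is that any ambiguity in the choice of the dominant chamber for $\mu$ (when $\mu$ is singular) lies in the stabilizer $W_\mu$, and such a Weyl element can be realized inside $\ch K_\eta\cap\Norm_{\chG}(\ch H_1)$—this is where I would invoke \cite{matsuki} again, to see that the $\theta_\eta$-fixed Weyl group acts transitively on the $\theta_\eta$-stable Borels containing $\ch H_1$ that share a given dominant weight. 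Pulling this back via $g,g'$ to $\ch H$ shows that $(\ch B_1,\lambda)$ and $(\ch B_1',\lambda')$ are $\chG$-conjugate by an element of $\ch K_\eta$, completing injectivity. This singular case is where all the real work sits; the regular case is immediate because the chamber (hence $\ch B_1$) and $g$ are already uniquely determined up to $\ch K_\eta\cap\ch B_1$.
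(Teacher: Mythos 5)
The paper gives no proof of this proposition beyond the remark that it ``follows readily,'' so you are filling a genuine gap, and your first two steps are correct and are exactly the computations the authors have in mind: the relations $\phi(j)\phi(z)\phi(j)\inv=\phi(\overline z)$ and $\phi(j)^2=\phi(-1)$ reduce to $\exp(2\pi i\lambda_1)=\eta^2$, and the independence of the choices of $\ch H_1$ and $g$ is handled correctly (uniqueness of $\ch H_1$ up to $\ch K_\eta\cap\ch B_1$, and uniqueness of the dominant representative $\lambda_1$ once $\ch H_1$ is fixed).

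The surjectivity step, however, has a real gap. For an arbitrary admissible homomorphism, writing $\phi(e^z)=\exp(z\mu+\overline z\nu)$, the element $\exp(2\pi i\mu)$ need \emph{not} lie in $Z(\chG)$: admissibility forces $\mu-\nu\in X_*(\ch H_1)$ and $\Ad(\phi(j))\mu=\nu$, and the relation $\phi(j)^2=\phi(-1)=\exp(\pi i(\mu-\nu))$ constrains only $\mu-\nu$, never $\mu$ itself. For example, with $\ch\gamma=1$ take $\mu=\nu=s\ch\rho$ for irrational $s$ and $\phi(j)=\delta$; this is admissible, but $\eta=\exp(\pi i\mu)\phi(j)$ has $\eta^2=\exp(2\pi is\ch\rho)\notin Z(\chG)$, so this $\phi$ is not in the image. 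The condition $\exp(2\pi i\mu)\in Z(\chG)$ is exactly integrality of the infinitesimal character of $\phi$, which must be imposed on the target of the bijection (as it implicitly is wherever the proposition is used; compare $\Homadm(\weil,\chGGamma,\lambda)$ with $\lambda\in P$ in the Overview). Your sentence deriving centrality from ``$\phi(-1)$ lies in the center of $\phi(\C^\times)$'' cannot be repaired, because the assertion it is meant to prove is false without that restriction. A second soft spot is injectivity: since $\lambda$ lives in $\ch\h$ for the \emph{fixed} Cartan $\ch H$, conjugation cannot move the third coordinate, yet $(\eta,\ch B_1,\lambda)$ and $(\eta,\ch B_1,w\lambda)$ produce literally the same $\phi$ (replace $g$ by $gn_w\inv$ with $n_w\in\Norm_{\chG}(\ch H)$ representing $w$). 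Your appeal to conjugation by elements of $\ch K_\eta$ never acts on $\lambda$, so it cannot show $\lambda=\lambda'$; one must either normalize $\lambda$ within its $W$-orbit or build that ambiguity into the equivalence relation on complete L-data. Both gaps occur where the statement itself is loose rather than where your strategy is wrong, but a complete proof has to confront them explicitly.
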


Recall (cf.~\ref{e:large}) associated to an admissible
homomorphism $\phi$ is a large L-packet $\Pi_\phi(G,\gamma)$.
For $\ch S_c\in\caP_c(\chG,\ch\gamma)$ define $\phi$ by
\eqref{e:phizj} and let
\begin{equation}
\label{e:large2}
\Pi_{\ch S_c}(G,\gamma)=\Pi_{\phi}(G,\gamma).
\end{equation}

Fix $(\eta,\ch B_1)$.
If
$\lambda\in\Preg$, $\ch S_c=(\eta,\ch B_1,\lambda)$ is a complete one-sided L-datum if
and only if $\exp(2\pi i\lambda)=\eta^2$.
For $\lambda$ satisfying this condition we may define the large
L-packet
$\Pi_{\ch S_c}(G,\gamma)$. Any two such L-packets are related by a {\it
translation functor} \cite{translation_principle}, and in this
sense the choice of $\lambda$ is not important.
We therefore drop the parameter $\lambda$ from complete L-data:

\begin{definition}
\label{d:onesidedldatum}
A one-sided L-datum for $(\chG,\ch\gamma)$ is a pair $\ch S=(\eta,\ch B_1)$ where  
$\eta$ is a strong involution of $\chG$ (Definition
\ref{d:stronginvolution}) and
$\ch B_1$ is a Borel subgroup of $\chG$.
Let
\begin{equation}
\label{e:caP}
\caP(\chG,\ch\gamma)=\{\text{one-sided L-data}\}/\chG.
\end{equation}
\end{definition}

There is a well defined map
\begin{equation}
\label{e:z(S)}
\caP(\chG,\ch\gamma)\mapsto Z(\chG)
\end{equation}
taking (the equivalence class of) $(\eta,\ch B_1)$ to $\eta^2$.
\medskip

For the purposes of  Proposition \ref{p:weil}
we have defined one-sided L-data $\caP(\chG,\ch\gamma)$ for
$(\chG,\ch\gamma)$. It is evident that the definition is symmetric, and
applies equally to $(G,\gamma)$. As discussed
in Section \ref{s:overview:Z} symmetrizing will give us the finer data
which parametrizes individual representations, instead of L-packets. 
So let 
\begin{equation}
\label{e:caPchP}
\begin{aligned}
\caP&=\caP(G,\gamma)\\
\ch\caP&=\caP(\chG,\ch\gamma).\\
\end{aligned}
\end{equation}
Define $\caP_c$ and $\ch\caP_c$ similarly.

\begin{definition}
\label{d:L}
An L-datum for $(G,\gamma)$ is a quadruple 
\begin{equation}
{\bf S}=(\xi,B_1,\eta,\ch B_1)
\end{equation}
where $S=(\xi,B_1)$ is an L-datum for $G$, and $\ch S=(\eta,\ch B_1)$ is an
L-datum for $\ch G$.
Let $\theta_{\xi,\h}$  (respectively $\theta_{\eta,\ch\h}$) be
$\theta_\xi$ restricted to $\h$ (resp. $\theta_\eta$ restricted to
$\ch\h$).
Recalling \eqref{e:transpose}, we require that these satisfy
\begin{equation}
(\theta_{\xi,\h})^t=-\theta_{\eta,{\ch\h}}.
\end{equation}

A complete L-datum is a set
\begin{equation}
\label{e:completeL}
{\bf S}_c=(\xi,B_1,\eta,\ch B_1,\lambda)
\end{equation}
where the same conditions hold,  $\lambda\in\Preg$ and
$\exp(2\pi i\lambda)=\eta^2$.

Let
\begin{equation}
\label{e:L}
\begin{aligned}
\caL&=\{\text{L-data}\}/G\times\chG\subset\caP\times\ch\caP\\
\caL_c&=\{\text{complete L-data}\}/G\times\chG
\subset \caP\times\ch\caP_c.
\end{aligned}
\end{equation}
\end{definition}

Suppose ${\bf S}_c=(S,\ch S_c)=(\xi,B_1,\eta,\ch B_1,\lambda)$ is a complete L-datum for $(G,\gamma)$. 
By \cite[Theorem 2.12]{bowdoin} associated to ${\bf S}_c$ is a
$(\g,K_\xi)$-module $I({\bf S}_c)$. 
This is a standard module, with regular integral infinitesimal character
$\lambda$, and has a unique irreducible quotient $J({\bf S}_c)$.
We obtain the following version of the Langlands classification.
For more detail, in particular  information on how to write $J({\bf S}_c)$ in
various classifications, see \cite[Section 9]{av1}.

\begin{theorem}[\cite{bowdoin}, Theorem 2-12]
\label{t:reps}
The map 
\begin{equation}
\label{e:caL}
\caL_{\text{c}}\ni {\bf S}_c\mapsto J({\bf S}_c)\in \Pi(G,\gamma)
\end{equation}
is a bijection.
\end{theorem}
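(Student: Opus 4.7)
The plan is to reduce the bijection \eqref{e:caL} to two simpler bijections by factoring through admissible homomorphisms on the dual side and standard-module constructions on the primal side. First I would partition $\caL_c$ according to the equivalence class of its dual component $\ch S_c = (\eta, \ch B_1, \lambda)$. By Proposition \ref{p:weil}, the set of such $\ch S_c$, modulo $\chG$, is in bijection with $\chG$-conjugacy classes of admissible homomorphisms $\phi : W_\R \to \chGGamma$. On the representation-theoretic side, $\Pi(G,\gamma)$ decomposes (using \eqref{e:PiG} and \eqref{e:large}) as a disjoint union of large L-packets $\Pi_\phi(G,\gamma)$ indexed by such $\phi$. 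Thus it suffices to produce, for each fixed $\phi$ (equivalently each fixed class of $\ch S_c$), a bijection between those complete L-data ${\bf S}_c = (S, \ch S_c)$ with that fixed dual component and the large L-packet $\Pi_{\ch S_c}(G,\gamma) = \Pi_\phi(G,\gamma)$.

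Next I would build the map ${\bf S}_c \mapsto J({\bf S}_c)$ explicitly. Given ${\bf S}_c = (\xi, B_1, \eta, \ch B_1, \lambda)$, the compatibility $(\theta_{\xi,\h})^t = -\theta_{\eta,\ch\h}$ forces $\theta_\xi$ to stabilize a Cartan $H_1 \subset B_1$ and makes the infinitesimal character $\lambda$ admissible for a $(\g, K_\xi)$-module supported on the corresponding Cartan. Choose a $\theta_\xi$-stable $H_1 \subset B_1$ (unique up to $K_\xi \cap B_1$ by \cite{matsuki}, mirroring the dual side), and transport $\lambda$ to $\h_1$. The datum $(\xi, B_1, H_1, \lambda)$ then determines, by the classical Langlands classification in the form of \cite[Theorem 2.12]{bowdoin} (or \cite[Section 9]{av1}), a standard $(\g, K_\xi)$-module $I({\bf S}_c)$ with a unique irreducible quotient $J({\bf S}_c)$. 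I would check that $J({\bf S}_c)$ depends only on the $G\times\chG$-orbit of ${\bf S}_c$, using the fact that conjugating $\xi$ by $g \in G$ intertwines the Harish-Chandra modules via $\pi \mapsto \pi^g$.

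Injectivity then proceeds in two stages: (i) if two complete L-data have inequivalent dual components, the associated $\phi$'s differ, so the large L-packets are disjoint; (ii) within a fixed large L-packet $\Pi_\phi(G,\gamma)$, the primal component $S = (\xi, B_1)$, subject to the compatibility condition, picks out a unique $G$-conjugacy class, and the classical Langlands classification for the strong real form $\xi$ shows the associated $J$'s are distinct. Surjectivity is the mirror image: given $(\xi, \pi) \in \Pi(G,\gamma)$, the Langlands classification for $(\g, K_\xi)$ produces a Borel $B_1$ and a character on a $\theta_\xi$-stable Cartan whose differential gives $\lambda \in \Preg$; dualising this data via the identifications $\h \cong \ch\h^*$, $X^*(H) = X_*(\ch H)$ from Section \ref{s:basic} yields $(\eta, \ch B_1)$ with $\exp(2\pi i\lambda) = \eta^2$ and the required compatibility of involutions.

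The hard part is verifying that the compatibility condition $(\theta_{\xi,\h})^t = -\theta_{\eta,\ch\h}$ captures exactly the data of a character of the component group $\mathbb S_\phi$ that selects an individual representation within the L-packet, as discussed in Section \ref{s:packets} and Lemma \ref{l:Ophi}. Concretely, one must show that once $\phi$ (equivalently $\ch S_c$) is fixed, the set of $G$-conjugacy classes of pairs $(\xi, B_1)$ satisfying the dual involution condition is in bijection with the disjoint union over strong real forms $\xi_i$ of $\Pi_\phi(G, \xi_i)$. This identification is the heart of \cite[Theorem 2-12]{bowdoin}, and I would quote it rather than redevelop the parametrization of L-packet elements by component-group characters; the computations reducing this to orbits on the flag variety will later reappear in Sections \ref{s:kgb} and \ref{s:onesided}.
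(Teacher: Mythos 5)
Your proposal is correct and in substance matches the paper: the paper offers no independent proof of Theorem \ref{t:reps}, simply quoting \cite[Theorem 2-12]{bowdoin} for the construction of $I({\bf S}_c)$ and $J({\bf S}_c)$ and for the bijection itself. Your sketch adds a sensible organization (partitioning $\caL_c$ by the dual component via Proposition \ref{p:weil} and matching large L-packets), but like the paper you ultimately defer the essential content --- that the compatibility condition on the involutions parametrizes the members of each L-packet --- to that same citation.
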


The large L-packet $\Pi_{S_c}$ of \eqref{e:large2} is the set of
irreducible representations defined by L-data with second coordinate
$\ch S_c$:

$$
\Pi_{\ch S_c}=\{J({\bf S})\,|\, {\bf S}=(S,\ch S_c)\}.
$$

Recall the right hand side of \eqref{e:caL} consists of the equivalence classes of
pairs $(\xi,\pi)$ where $\xi$ is a strong involution and $\pi$ is a
$(\g,K_\xi)$-module. This is a somewhat subtle space. 
Furthermore, as discussed after  Proposition \ref{p:weil}, we would like
to replace $\caL_c$ with $\caL$ on the left side of \ref{e:caL}. 
This amounts to considering $\lambda$ only up to $X^*(H)$, and using
the translation principle.
With these considerations in mind 
we give  several alternative formulations of Theorem \ref{t:reps}.

Suppose $\GR$ is a real form of $G$ (cf.~Remark \ref{r:oldrealforms})
and $\Lambda\subset\Preg$.
Recall (cf.~\ref{d:infchar}) $\Pi(G(\R),\lambda)$ is the set of equivalence
classes of irreducible admissible representations with infinitesimal
character $\lambda$. 
By analogy with Definition \ref{d:pig} 
define $\Pi(\GR,\Lambda)=\coprod_{\lambda\in\Lambda}\Pi(\GR,\lambda)$.

\begin{theorem}
\label{t:ldata}
\hfil\break

\noindent (1) Fix a set $\Lambda\subset P_{\text{reg}}$ of representatives of
 $P/X^*(H)$. This is a finite set if $G$ is semisimple.
 The map \eqref{e:caL}
 induces a bijection 
\begin{equation}
\caL\overset{1-1}\longleftrightarrow \Pi(G,\gamma,\Lambda)
\end{equation}
(cf.~Definition \ref{d:pig}).
If $G$ is semisimple and simply connected we may take
$\Lambda=\{\rho\}$, the infinitesimal character of the trivial
representation.

\medskip\noindent (2) Let $I\simeq \I/G$ be a set of representatives of the
  strong real forms as in \eqref{e:I}. For
  each $i\in I$ let $G_i(\R)$ be the real form of $G$ corresponding to
  the strong involution $\xi_i$ (cf.~Section \ref{s:involutions}).
Choose $\Lambda$ as in (1).  The map  \eqref{e:caL} induces a bijection
\begin{equation}
\caL\overset{1-1}\longleftrightarrow \coprod_{i\in I}\Pi(G_i(\R),\Lambda).
\end{equation}

\medskip\noindent (3) Suppose $G$ is adjoint.
Write $G_1(\R),\dots, G_m(\R)$ for the real
forms of $G$ in the given inner class, and choose representatives
$\lambda_1,\dots,\lambda_n\in \Preg$
for $P/R$.
The map \eqref{e:caL}
induces a bijection:
\begin{equation}
\caL\overset{1-1}\longleftrightarrow \coprod_{i,j}\Pi(G_i(\R),\lambda_j).
\end{equation}
\end{theorem}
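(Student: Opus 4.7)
The plan is to deduce Theorem \ref{t:ldata} from Theorem \ref{t:reps} by analyzing the fibers of the forgetful map $\caL_c \to \caL$ that drops the $\lambda$-coordinate. Given $(S, \ch S) \in \caL$ with $\ch S = (\eta, \ch B_1)$, the fiber is the set of $\lambda \in \Preg$ with $\exp(2\pi i \lambda) = \eta^2$. By \eqref{e:pgh}, the exponential $\lambda \mapsto \exp(2\pi i \lambda)$ sends $P$ onto $Z(\chG)$ with kernel $X^*(H)$, so $P/X^*(H) \simeq Z(\chG)$; in particular this quotient is finite when $G$ is semisimple. Hence the preimage of $\eta^2$ is a single $X^*(H)$-coset in $P$, and its intersection with $\Preg$ is non-empty because the singular set meets any $X^*(H)$-coset in a finite union of proper affine sublattices (one codimension-$1$ sublattice for each coroot $\ch\alpha$, cut out by $\langle \cdot, \ch\alpha\rangle = 0$).

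Thus a set $\Lambda \subset \Preg$ of representatives of $P/X^*(H)$ exists, and for any such $\Lambda$ each $(S, \ch S) \in \caL$ admits a unique extension to a complete L-datum $(S, \ch S, \lambda)$ with $\lambda \in \Lambda$. Under the bijection of Theorem \ref{t:reps}, the subset $\{{\bf S}_c \in \caL_c : \lambda \in \Lambda\}$ corresponds precisely to the Harish-Chandra modules whose infinitesimal character lies in $\Lambda$, which is $\Pi(G, \gamma, \Lambda)$ by Definition \ref{d:pig}. This establishes Part (1). When $G$ is semisimple and simply connected, $\chG$ is adjoint and $Z(\chG) = 1$, so $P = X^*(H)$ and $\Lambda = \{\rho\}$ suffices.

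Parts (2) and (3) are then formal consequences. For (2), combine Part (1) with the decomposition $\Pi(G, \gamma, \Lambda) = \coprod_{i\in I} \Pi(G, \xi_i, \Lambda)$ from \eqref{e:PiG}, and identify $\Pi(G, \xi_i, \Lambda)$ with $\Pi(G_i(\R), \Lambda)$ using the standard equivalence between irreducible $(\g, K_{\xi_i})$-modules and irreducible admissible representations of the real form $G_i(\R)$. For (3), when $G$ is adjoint the lemma preceding Example \ref{ex:equalrank} gives a bijection between strong real forms and real forms, so $I$ may be taken as $\{1, \ldots, m\}$; adjointness also forces $X^*(H) = R$, so representatives $\lambda_1, \ldots, \lambda_n$ of $P/R$ are precisely a choice of $\Lambda$, and expanding $\Pi(G_i(\R), \Lambda) = \coprod_j \Pi(G_i(\R), \lambda_j)$ yields the claimed bijection.

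The substantive step is Part (1), and its essence is the identification $P/X^*(H) \simeq Z(\chG)$: translating $\lambda$ by an element of $X^*(H)$ leaves $\eta^2$ unchanged, so the freedom in the choice of $\lambda$ within its coset exactly matches the freedom introduced by forgetting it from a complete L-datum. Everything beyond this is formal, with the only technical check being that representatives of $P/X^*(H)$ can be chosen in $\Preg$, a routine density argument for a full-rank lattice coset.
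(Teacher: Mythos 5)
Your argument is correct and follows essentially the same route as the paper, which states Theorem \ref{t:ldata} as a direct reformulation of Theorem \ref{t:reps} obtained by ``considering $\lambda$ only up to $X^*(H)$''; your fiber analysis of the forgetful map $\caL_c\to\caL$, resting on the identification $P/X^*(H)\simeq Z(\chG)$ from \eqref{e:pgh}, is exactly the bookkeeping this requires. The only point worth making explicit is that $W$-translates of $\lambda$ lie in the same $X^*(H)$-coset (since $w\lambda-\lambda$ is a sum of roots), so equating ``infinitesimal character in $\Lambda$'' with ``$\lambda\in\Lambda$'' is consistent with the normalization of $\lambda$ within its $W$-orbit that is implicit in Theorem \ref{t:reps}.
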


Sections \ref{s:onesided} and \ref{s:Z} will be concerned with finding a
combinatorial description of the set $\caL$. Most of the work involves
the one-sided parameter space $\caP(G,\gamma)$ \eqref{e:caPchP}. Before turning to this
we give a geometric interpretation of this space.

\sec{Relation with the flag variety}
\label{s:kgb}
\begin{subequations}
\renewcommand{\theequation}{\theparentequation)(\alph{equation}}  
Recall 
(Definition \ref{d:onesidedldatum}) the one-sided parameter space 
$\caP=\caP(G,\gamma)$ is the set of conjugacy classes of  
one-sided L-data, i.e.
pairs $(\xi,B_1)$ where $\xi$ is a
strong involution
(Definition \ref{d:stronginvolution})
and $B_1$ is a Borel subgroup.
The space $\caP$
has a natural interpretation in terms of
the flag variety.
We see this by conjugating any pair $(\xi,B_1)$ to one with $\xi$ in a
fixed set of representatives as in 
in  \eqref{e:I}.  In the next section we will instead conjugate $B_1$
to $B$, and thereby obtain a combinatorial model of $\caP$.

Let $\caB$ be the set of Borel subgroups of $G$.
Then the set of one-sided L-data for $(G,\gamma)$ is $\I\times\caB$,
and 
\begin{equation}
\label{e:caPIBG}
\caP=(\I\times \caB)/G.
\end{equation}
Every Borel subgroup is conjugate to $B$, and $\caB\simeq G/B$.
For $\xi\in \I$ let
\begin{equation}
\label{e:caPxi}
\caP[\xi]=\{(\xi',B_1)\,|\,\xi'\text{ is conjugate to }\xi\}/G.
\end{equation}
Then 
\begin{equation}
\caP[\xi]\simeq (G/K_\xi\times G/B)/G
\end{equation}
with $G$ acting by left multiplication.
It is  an elementary exercise to see the map 
\begin{equation}
(\xi',B_1)=(g\xi g\inv,hBh\inv)\mapsto K_\xi(g\inv h)B\quad (g,h\in G)
\end{equation}
gives a bijection 
\begin{equation}
\label{e:capxi}
\caP[\xi]\overset{1-1}\longleftrightarrow K_\xi\bs G/B.
\end{equation}

As in
\eqref{e:I} choose a set $\{\xi_i\,|\, i\in I\}$ of representatives of
$\I/G$. Then 

\begin{equation}
\caP\simeq\coprod_{i\in I}(G/K_i\times G/B)/G
\end{equation}
and we see:
\end{subequations}

\begin{proposition}
\label{p:kgb}
There is a natural bijection:
\begin{equation}
\caP(G,\gamma)\overset{1-1}\longleftrightarrow \coprod_{i\in I}\, K_i\bs G/B.
\end{equation}
\end{proposition}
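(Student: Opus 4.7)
The plan is to combine the decomposition of $\caP$ over strong real forms with the standard orbit-stabilizer identification on each piece, both of which have essentially been set up in the text preceding the proposition.

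First I would decompose $\caP$ according to the conjugacy class of the strong involution. Since $\I=\coprod_{i\in I}\I_i$ by \eqref{e:I}, and the $G$-action on $\I\times\caB$ preserves the $\I$-coordinate up to conjugacy, we obtain
\begin{equation}
\caP = (\I\times\caB)/G = \coprod_{i\in I} (\I_i\times\caB)/G = \coprod_{i\in I}\caP[\xi_i],
\end{equation}
where $\caP[\xi_i]$ is as in \eqref{e:caPxi}. This reduces the proposition to establishing, for each $i\in I$, a natural bijection $\caP[\xi_i]\longleftrightarrow K_i\bs G/B$.

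Next I would identify the pieces. Since $\Stab_G(\xi_i)=K_i$, the orbit map $g\mapsto g\xi_ig^{-1}$ gives a $G$-equivariant bijection $G/K_i\overset{\sim}\longrightarrow \I_i$, and the similar map for Borel subgroups gives $G/B\overset{\sim}\longrightarrow\caB$. Hence
\begin{equation}
\caP[\xi_i]\simeq (G/K_i\times G/B)/G,
\end{equation}
with $G$ acting diagonally by left multiplication. The bijection $(G/K_i\times G/B)/G\longleftrightarrow K_i\bs G/B$ is standard: send the orbit of $(gK_i,hB)$ to $K_i g^{-1}h B$. This is well-defined because replacing $(g,h)$ by $(g',h')=(xgk,xh b)$ for $x\in G$, $k\in K_i$, $b\in B$ gives $(g')^{-1}h' = k^{-1}g^{-1}x^{-1}xh b = k^{-1}(g^{-1}h)b$, which represents the same double coset. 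An inverse is given by $K_i gB\mapsto$ orbit of $(K_i,gB)$, and one checks routinely that these maps are mutually inverse. Composing the two identifications yields the map $(g\xi_ig^{-1},hBh^{-1})\mapsto K_i(g^{-1}h)B$ indicated in \eqref{e:capxi}.

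Taking the disjoint union over $i\in I$ produces the claimed bijection
\begin{equation}
\caP(G,\gamma)\longleftrightarrow\coprod_{i\in I}K_i\bs G/B.
\end{equation}
There is no real obstacle here; the entire argument is just bookkeeping for the orbit-stabilizer theorem applied twice, and the most delicate step is the routine verification that the map in the second paragraph is well-defined on $G$-orbits and independent of all choices.
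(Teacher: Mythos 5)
Your proposal is correct and follows essentially the same route as the paper: decompose $\caP=(\I\times\caB)/G$ over the conjugacy classes $\I_i$ of strong involutions, identify each piece $\caP[\xi_i]$ with $(G/K_i\times G/B)/G$ via orbit–stabilizer, and then with $K_i\bs G/B$ by the map $(g\xi_ig\inv,hBh\inv)\mapsto K_i(g\inv h)B$ of \eqref{e:capxi}. The only difference is that you spell out the well-definedness check that the paper leaves as an ``elementary exercise.''
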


\sec{The One Sided Parameter Space}
\label{s:onesided}
We now turn to the question of formulating an effective algorithm for
computing the space $\caL\subset\caP\times\ch\caP$ \eqref{e:L}.
This mainly comes down to computing the one-sided parameter space
$\caP$, which we do in this section. 
We put $\caP$ and $\ch\caP$ together to define the parameter space
$\caZ$ of representations in Section \ref{s:Z}.

We begin by looking for a normal form for
one-sided L-data.

Fix basic data $(G,\gamma)$ as usual and set $\I=\I(G,\gamma)$.
Recall (cf.~Section \ref{s:kgb}) $\caP=(\I\times\caB)/G$.
Since every Borel subgroup is conjugate to $B$,
every
element of $\I\times\caB$ may be conjugated to one of the form $(\xi,B)$.
Therefore the map
\begin{subequations}
\renewcommand{\theequation}{\theparentequation)(\alph{equation}}  
\begin{equation}
\I\ni \xi\mapsto (\xi,B)\in (\I\times\caB)/G=\caP
\end{equation}
is surjective. Since $B$ is its own normalizer, we see $(\xi,B)$ is
$G$-conjugate to $(\xi',B)$ if and only if $\xi$ is $B$-conjugate to
$\xi'$. So we obtain a bijection
\begin{equation}
\I/B\overset{1-1}\longleftrightarrow \caP
\end{equation}
 from $B$-orbits on $\I$ to $\caP$, 
sending the $B$-orbit of $\xi\in\I$  to the $G$-orbit of the pair $(\xi,B)$.

Now suppose $\xi\in \I$. By 
(\cite[Lemma 6.18]{abv}, \cite{matsuki}), $\xi\in \Norm_{\G^\Gamma}(H_1)$
for some Cartan subgroup $H_1\subset B$.
There exists $b\in B$ such that $bH_1b\inv=H$, so $b\xi b\inv\in
N^\Gamma=\Norm_{\GGamma}(H)$. If $b_1$ is another such element then $b_1=hb$ with $h\in
H$, and $b_1\xi b_1\inv=h(b\xi b\inv)h\inv$. 
Therefore
\begin{equation}
\I/B\simeq (\I\cap N^\Gamma)/H.
\end{equation}
\end{subequations}

This gives our primary combinatorial construction:

\begin{definition}
\label{d:X}  
The  {\it one-sided parameter space} for $(G,\gamma)$ is the set
\begin{equation}
\label{e:X}
\begin{aligned}
\X(G,\gamma)&=(\I\cap N^\Gamma)/H\\
&=\{\xi\in\Norm_{\GGamma\bs G}(H)\,|\, \xi^2\in
Z(G)\}/H.
\end{aligned}
\end{equation}

\end{definition}
This is the set of strong involutions normalizing $H$,
modulo conjugation by $H$.
If $(G,\gamma)$ is understood we write $\X=\X(G,\gamma)$.

From the preceding discussion we have
\begin{equation}
\X=(\I\cap N^\Gamma)/H\simeq \I/B\simeq\caP
\end{equation}
and we conclude:
\begin{proposition}
\label{p:X=S}
There is a canonical bijection
\begin{equation}
\X\overset{1-1}\longleftrightarrow\caP
\end{equation}
taking the $H$-orbit of an element $\xi$ of $\I\cap N^\Gamma$ to the
$G$-orbit of $(\xi,B)$ in $\caP$.
\end{proposition}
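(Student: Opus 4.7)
The plan is to unwind the definitions and verify that the chain of maps set up in the paragraphs preceding the proposition is in fact a well-defined bijection. All the geometric content has already been assembled; what remains is to assemble the three reductions into one coherent argument and check the two places where something non-formal happens.

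First I would write the correspondence as a composition
\begin{equation*}
\X \;=\; (\I\cap N^\Gamma)/H \;\longrightarrow\; \I/B \;\longrightarrow\; (\I\times\caB)/G \;=\; \caP,
\end{equation*}
where the first arrow is induced by the inclusion $\I\cap N^\Gamma\hookrightarrow \I$ and the second arrow sends the $B$-orbit of $\xi$ to the $G$-orbit of $(\xi,B)$. The second arrow is well-defined since $B$-conjugation of $\xi$ keeps the pair $(\xi,B)$ in the same $G$-orbit, and surjective since every Borel is $G$-conjugate to $B$. Injectivity of the second arrow follows from $B$ being self-normalizing: if $(\xi,B)$ and $(\xi',B)$ are $G$-conjugate by $g$, then $gBg^{-1}=B$ forces $g\in B$, so $\xi$ and $\xi'$ are $B$-conjugate.

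Next I would analyze the first arrow. Surjectivity is where the genuine structural input enters: given $\xi\in\I$, I need to know that after replacing $\xi$ by a $B$-conjugate it lies in $N^\Gamma$. For this I invoke the fact cited from \cite[Lemma 6.18]{abv} and \cite{matsuki} that $\theta_\xi=\int(\xi)$ preserves some Cartan subgroup $H_1\subset B$; equivalently $\xi\in\Norm_{\GGamma}(H_1)$. Choosing $b\in B$ with $bH_1b^{-1}=H$ gives $b\xi b^{-1}\in N^\Gamma\cap\I$, which provides a preimage. For injectivity, if $\xi,\xi'\in \I\cap N^\Gamma$ are conjugate by some $b\in B$, then $b$ normalizes $H$ (since it conjugates $H=\xi H\xi^{-1}$-stabilizer information appropriately; concretely $bHb^{-1}$ is a Cartan of $B$ conjugate to $H$, so equal to $H$), hence $b\in B\cap N=H$. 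Thus $\xi$ and $\xi'$ are $H$-conjugate, giving a well-defined injection on the quotients.

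The main obstacle is really the surjectivity of the first arrow, since it is the only step requiring external input: one must know that every (strong) involution stabilizes some Cartan subgroup of a given Borel. Once that is in hand, the remainder is bookkeeping about self-normalizing subgroups, and the canonical nature of the bijection is automatic from the description: it sends the $H$-orbit of $\xi\in\I\cap N^\Gamma$ to the $G$-orbit of $(\xi,B)$, exactly as stated. Composing, one obtains the bijection $\X\leftrightarrow\caP$ asserted in the proposition.
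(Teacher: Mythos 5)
Your overall route is the same as the paper's: factor the correspondence through $\I/B$, handle the arrow $\I/B\to\caP$ by self-normalization of $B$, and use the existence statement of \cite[Lemma 6.18]{abv}/\cite{matsuki} (a $\theta_\xi$-stable Cartan subgroup inside $B$) to get surjectivity of $(\I\cap N^\Gamma)/H\to\I/B$. The gap is in your injectivity argument for that first arrow. You assert that if $\xi'=b\xi b^{-1}$ with $b\in B$ and $\xi,\xi'\in\I\cap N^\Gamma$, then $b$ normalizes $H$, ``since $bHb^{-1}$ is a Cartan of $B$ conjugate to $H$, so equal to $H$.'' That implication is false: a Borel subgroup contains many Cartan subgroups, all conjugate to $H$ by elements of $B$ but not equal to $H$ (already in $SL(2)$, conjugating the diagonal torus by a nontrivial upper unipotent element gives a different Cartan subgroup of the upper-triangular Borel). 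The conclusion $b\in B\cap N=H$ can also fail outright: take $\xi=\xi'$ central, so that every $b\in B$ conjugates $\xi$ to $\xi'$.

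What you want (that $\xi$ and $\xi'$ are $H$-conjugate) is still true, but it needs the \emph{uniqueness} clause of the result you cite only for existence: the $\theta_\xi$-stable Cartan subgroup of $B$ is unique up to conjugation by $K_\xi\cap B$. Since $H$ and $b^{-1}Hb$ are both $\theta_\xi$-stable Cartan subgroups of $B$, there is $k\in K_\xi\cap B$ with $b^{-1}Hb=kHk^{-1}$; then $bk\in\Norm_B(H)=N\cap B=H$, say $bk=h$, and $\xi'=b\xi b^{-1}=(bk)\xi(bk)^{-1}=h\xi h^{-1}$ because $k$ stabilizes $\xi$. This is in substance what the paper does, phrased instead as checking that $\xi\mapsto b\xi b^{-1}$ gives a well-defined inverse map $\I/B\to(\I\cap N^\Gamma)/H$; either way, the uniqueness part of Matsuki's theorem is the required input, not just existence.
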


Given $x\in\X$, let
\begin{equation}
\label{e:Xx}
\X[x]=\{x'\,|\,x'\text{ is $G$-conjugate to }x\}.
\end{equation}
This is a slight abuse of notation: we say $x,x'\in\X$ are
$G$-conjugate 
if $\xi,\xi'$ are $G$-conjugate, where $\xi,\xi'$ are
pre-images of $x,x'$ in $\I\cap N^\Gamma$, respectively.

By Proposition \ref{p:X=S} and \eqref{e:capxi} we see
\begin{equation}
\label{e:Xxi}
\X[x]\simeq K_\xi\bs G/B
\end{equation}
where $\xi$ is a preimage of $x$ in $\I\cap N^\Gamma$.

Choose a set $\{\xi_i\,|\, i\in I\}$ of
representatives of $\I/G$ as in \eqref{e:I}.
By Proposition \ref{p:kgb} and \eqref{e:Xxi} we obtain:

\begin{corollary}
\label{c:I}
\begin{equation}
\label{e:XcupXi}
\X\simeq\coprod_{i\in I}K_i\bs G/B.
\end{equation}
\end{corollary}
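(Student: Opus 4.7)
The plan is to obtain the bijection by composing two already-established identifications, namely Proposition \ref{p:X=S} and Proposition \ref{p:kgb}. Proposition \ref{p:X=S} gives a canonical bijection $\X \overset{1-1}\longleftrightarrow \caP$ that sends the $H$-orbit of $\xi \in \I \cap N^\Gamma$ to the $G$-orbit of the pair $(\xi,B) \in \I\times\caB$. Proposition \ref{p:kgb} gives a bijection $\caP \overset{1-1}\longleftrightarrow \coprod_{i\in I} K_i \backslash G/B$. Composing these yields the desired bijection, so formally there is almost nothing to prove.

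Alternatively, and a bit more directly, I would argue as follows. Partition $\X$ into the subsets $\X[x]$ defined in \eqref{e:Xx}; since two elements of $\X$ lie in the same $\X[x]$ precisely when their preimages in $\I \cap N^\Gamma$ are $G$-conjugate, the set of these subsets is in bijection with $\I/G$, which by \eqref{e:I1} is indexed by $I$. Thus
\begin{equation*}
\X = \coprod_{i\in I} \X[\xi_i].
\end{equation*}
Then apply \eqref{e:Xxi}, which identifies $\X[\xi_i]$ with $K_{\xi_i}\backslash G/B = K_i\backslash G/B$, and take the disjoint union over $i\in I$.

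There is no genuine obstacle here: the content of the corollary is already packaged into Propositions \ref{p:X=S} and \ref{p:kgb} together with \eqref{e:Xxi}. The only point worth explicit mention is the compatibility between the two descriptions of the bijection — that tracing through Proposition \ref{p:X=S} (sending $\xi \mapsto (\xi,B)$) followed by the map of Proposition \ref{p:kgb} (sending $(\xi,B)$ to a $K_i$-double coset via the recipe $(g\xi g^{-1}, hBh^{-1}) \mapsto K_{\xi}(g^{-1}h)B$) indeed lands in the correct component indexed by $i \in I$. This is immediate from the construction of the representatives $\xi_i$.
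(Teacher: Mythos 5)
Your proposal is correct and is essentially the paper's own derivation: the corollary is stated there as an immediate consequence of Proposition \ref{p:kgb} together with Proposition \ref{p:X=S} and the identification \eqref{e:Xxi} of $\X[x]$ with $K_\xi\bs G/B$. Both of your routes (composing the two propositions, or partitioning $\X$ into the classes $\X[\xi_i]$) match what the paper does.
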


See Examples \ref{ex:sl2_2} and \ref{ex:psl2_1}.

We need to understand the structure of $\X$ in some detail.
We now give more information about it. At the same time we reiterate
some earlier definitions and introduce 
the twisted involutions in the Weyl group.

We fix $(G,\gamma)$ throughout and drop them from the notation.

\begin{subequations}
\renewcommand{\theequation}{\theparentequation)(\alph{equation}}

Let
\begin{equation}
\label{e:x1}
N=\Norm_G(H)\subset N^\Gamma=\Norm_{\GGamma}(H)
\end{equation}
and
\begin{equation}
\label{e:WGamma}
W=N/H\subset W^\Gamma=N^\Gamma/H.
\end{equation}

Recall (Definition \ref{d:stronginvolution}) 
\begin{equation}
\I=\{\xi\in \GGamma\bs G\,|\, \xi^2\in Z(G)\}, 
\end{equation}
and that $G$ acts on $\I$ by conjugation.
Let
\begin{equation}
\begin{aligned}
\label{e:Xt}
\tX&=\I\cap N^\Gamma\\
&=\{\xi\in N^\Gamma\bs N\,|\, \xi^2\in Z(G)\}.
\end{aligned}
\end{equation}
This is the set of strong involutions normalizing $H$.

Let
\begin{equation}
\label{e:X2}
\X=\tX/H
\end{equation}
as in \eqref{e:X}
(the quotient is by the conjugation action).

The group $N$ acts naturally on $\tX$ and $\X$; the  action of $N$ on $\X$
factors to $W$.
This action of $W$ on $\X$ corresponds to the cross action of $W$ on characters
\cite[Definition 8.3.1]{green}. We therefore denote this action
$\times$. 
That is for $w\in W$ and $x\in \X$,
choose $n\in N$ mapping to $w$, $\xi\in\tX$ mapping to $x$ 
and  define 
\begin{equation}
\label{e:cross}
w\times x=\text{ image of }n\xi n\inv\text{ in }\X.
\end{equation}

Every strong involution is conjugate to one in $\tX$, and we see
\begin{equation}
\label{e:txn}
\tX/N\simeq \X/W.
\end{equation}
See Proposition \ref{p:X/W} for an interpretation of this space.

Let 
\begin{equation}
\label{e:I_W}
\I_W=\{\tau\in W^\Gamma\bs W\,|\, \tau^2=1\}.
\end{equation}

Write $\tilde p:\tX\mapsto W^\Gamma$ for the restriction of the
map $N^\Gamma\mapsto W^\Gamma$ to $\tX$.
It is immediate that $\text{Im}(\tilde p)\subset \I_W$, and 
$\tilde p:\tX\mapsto \I_W$ factors to a map
\begin{equation}
\label{e:p}
p:\X\mapsto \I_W.
\end{equation}
\end{subequations}
Recall $\delta$ is the distinguished element of $\X$ (Definition
\ref{d:extended}); use the same notation for its image in $\I_W$.

\begin{lemma}[\cite{richardson_springer}]
\label{l:pX}
The map $p:\X\mapsto \I_W$ is surjective.  
\end{lemma}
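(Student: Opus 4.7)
The plan is to start with any set-theoretic lift of the given $\tau\in\I_W$ to $N^\Gamma$, and then modify it by an element of $H$ so that its square lands in $Z(G)$; this will produce the required element $\xi\in\tX$ with $p(\xi)=\tau$.

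Concretely, I would pick any preimage $n\in N^\Gamma$ of $\tau$ under $N^\Gamma\twoheadrightarrow W^\Gamma$. Since $\tau\in W^\Gamma\setminus W$, the element $n$ lies in $N^\Gamma\setminus N$. The condition $\tau^2=1$ in $W^\Gamma$ forces $n^2\in H$, and conjugation by $n$ then descends to an involutive automorphism $\theta:=\Int(n)|_H$ of the torus $H$. Every other lift of $\tau$ has the form $hn$ for some $h\in H$, and an immediate computation in $N^\Gamma$ gives
$$
(hn)^2 \;=\; h\,(nhn^{-1})\,n^2 \;=\; h\,\theta(h)\,n^2.
$$
So my task reduces to finding $h\in H$ solving $h\,\theta(h)\in n^{-2}\,Z(G)$.

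The key structural input is that the map $\phi:H\to H$, $h\mapsto h\theta(h)$, is a group homomorphism (since $H$ is abelian) whose image equals $(H^\theta)^0$, the identity component of the $\theta$-fixed subgroup: at the Lie-algebra level $d\phi=1+\theta$ surjects onto the $+1$-eigenspace $\h^\theta$, and $\phi$ takes values in the closed subgroup $H^\theta$, so its image is the connected subgroup with Lie algebra $\h^\theta$. The quotient $H^\theta/(H^\theta)^0$ is an elementary abelian $2$-group (the non-identity-component classes are squares of roots on which $\theta$ acts by $-1$). Also $n^2\in H^\theta$ (since $n$ commutes with $n^2$) and $Z(G)\subset H^\theta$. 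Thus the solvability of the equation amounts to showing that the class of $n^{-2}$ in the $2$-group $H^\theta/(H^\theta)^0$ lies in the image of $Z(G)$.

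This last verification is the main obstacle, and the cleanest way I see to handle it is to make a judicious choice of the initial lift $n$ using the Tits group $\tW$ of Section~\ref{s:tits}. Writing $\tau=w\delta$ with $\delta w\delta^{-1}=w^{-1}$, one takes $n=\sigma_w\cdot\delta$ where $\sigma_w\in\tW$ is the canonical Tits lift of $w$; since $\delta$ preserves the pinning, $\delta(\sigma_w)=\sigma_{\delta(w)}=\sigma_{w^{-1}}$, so $n^2=\sigma_w\sigma_{w^{-1}}$ is the controlled $2$-torsion element prescribed by the Tits relations, and one verifies that its class in $H^\theta/(H^\theta)^0$ is realized by an element of $Z(G)$, producing the desired $h$. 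A conceptually parallel route, more in line with Richardson--Springer, is to induct on the length of $\tau$: any $\tau$ of positive length admits either a complex descent $s_\alpha\tau s_\alpha\in\I_W$ or a non-compact imaginary descent $s_\alpha\tau\in\I_W$ (with $s_\alpha\tau=\tau s_\alpha$), each strictly shorter; by induction lift the shorter twisted involution to $\tX$, and then use the cross action of $s_\alpha$ (in the complex case) or a Cayley-transform-type construction (in the imaginary case) to lift $\tau$ itself, the base case being the distinguished element $\delta$ which lifts to the distinguished strong involution $\delta\in\tX$ by construction.
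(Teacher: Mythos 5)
Your primary argument correctly reduces the lemma to the assertion that some lift $n\in N^\Gamma$ of $\tau$ satisfies $n^2\in Z(G)\cdot(H^\theta)^0$, where $(H^\theta)^0$ is the image of $h\mapsto h\theta(h)$. But that assertion \emph{is} the lemma: your reduction is an equivalence, not a simplification, and the one place where content is required --- showing that the class of the $2$-torsion element $\sigma_w\sigma_{w^{-1}}\in H_0$ in $H^\theta/(H^\theta)^0$ is hit by $Z(G)$ --- is dispatched with ``one verifies.'' That verification requires computing $\sigma_w\sigma_{w^{-1}}$ explicitly as a product of coroots evaluated at $-1$ and comparing it against the cocharacter lattice data defining $Z(G)$ and $\h^\theta$; it is work of essentially the same difficulty as the lemma itself, and nothing in your write-up does it. (A minor additional slip: conjugation by $n\in N^\Gamma\setminus N$ acts on $Z(G)$ through the outer involution $\gamma$, so only $Z(G)^\Gamma$ lies in $H^\theta$; this is harmless since $\xi^2\in Z(G)^\Gamma$ automatically, but it should be said.)

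Your closing sketch is in fact the route the paper takes (see the proof of Propositions \ref{p:X_i/W} and \ref{p:Xstrongrealforms} at the end of Section \ref{s:cayley}, and Remark \ref{r:computeX}): starting from the distinguished element of $\X_{\delta}$, one reaches every fiber $\X_\tau$ by cross actions and Cayley transforms, using the Richardson--Springer fact that every twisted involution is obtained from $\delta$ by twisted conjugation and multiplication by imaginary reflections. As written, however, it also elides the essential point. ``Noncompact imaginary'' is not a property of $\tau\in\I_W$ --- only ``imaginary'' ($\tau(\alpha)=\alpha$) makes sense there --- and the Cayley transform $c^\alpha$ is defined only on the subset $\X_\tau(\alpha)$ of elements for which $\alpha$ is noncompact. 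So at each imaginary step of the induction you must prove that $\X_\tau\ne\emptyset$ forces $\X_\tau(\alpha)\ne\emptyset$, i.e.\ that the grading at $\alpha$ can be flipped by translating a given $x$ by some $h\in H'_{-\tau}$ with $\alpha(h)=-1$ (or that it is automatically noncompact when no such $h$ exists); this is exactly Lemma \ref{l:cayley3}(5), and it is where the work lies. With that ingredient supplied, together with Lemma \ref{l:cayley1}(3) to track $p(c^\alpha(x))=s_\alpha p(x)$, your inductive route closes the gap and coincides with the paper's proof; without it, neither of your two arguments is complete.
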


We prove this later; see Proposition \ref{p:X_i/W} and the end of
Section \ref{s:cayley}.

\begin{subequations}
\renewcommand{\theequation}{\theparentequation)(\alph{equation}}

For $\xi\in\tX$ the restriction of $\theta_{\xi}$ to $H$ only
depends on the image $x$ of $\xi$ in $\X$. 
Therefore we may define
\begin{equation}
\label{e:thetaxH}
\theta_{x,H}=\theta_{\xi}\text{ restricted to }H.
\end{equation}

By Lemma \ref{l:pX}
$\I_W$ may be thought of as the set of Cartan
involutions  of $H$ for this inner class:
\begin{equation}
\I_W\overset{1-1}\longleftrightarrow \{\theta_{x,H}\,|\, x\in \X\}.
\end{equation}

The map $\xi\mapsto \xi^2\in Z(G)$ is constant on fibers
of the map $\tX\mapsto\X$. For $x\in\X$ we define $x^2\in Z(G)$
accordingly. 
For $z\in Z(G)$ let
\begin{equation}
\label{e:Xz}
\X(z)=\{x\in\X\,|\,x^2=z\}.
\end{equation}
\end{subequations}
Note that $\X(z)$ is empty unless $z\in Z^{\Gamma}$.

We can make these constructions more concrete using the distinguished element
$\delta$ of Definition \ref{d:extended}.
Let $\theta=\int(\delta)$. 
Then
\begin{equation}
\label{e:concrete}
\begin{aligned}
\tX&=\{x\in N\delta\,|\, x^2\in Z(G)\}\\
&=\{g\delta\,|\, g\in N,\, g\theta(g)\in Z(G)\}\\
&\overset{1-1}\longleftrightarrow \{g\in N\,|\, g\theta(g)\in Z(G)\}\\
\X&=\tX/\{g\delta\rightarrow hg\theta(h\inv)\delta\,|\,h\in H\}\\
&\overset{1-1}\longleftrightarrow \{g\in N\,|\, g\theta(g)\in
Z(G)\}/\{g\rightarrow hg\theta(h\inv)\,|\, h\in H\}\\
\I_W&=\{\tau\in W\delta\,|\, \tau^2=1\}\\
&=\{w\delta\,|\, w\in W, w\theta(w)=1\}\\
&\overset{1-1}\longleftrightarrow \{w\in W\,|\, w\theta(w)=1\}
\end{aligned}
\end{equation}
The last equality identifies $\I_W$ with the {\it twisted involutions}
in the Weyl group.
Also note that conjugation in $W^\Gamma$ becomes {\it twisted
  conjugation}:
\begin{equation}
\label{e:twistedconjugation}
y\cdot w=yw\theta(y\inv)\quad (w,y\in W).
\end{equation}

\sec{The Space $\caZ$ and the Main Theorem}
\label{s:Z}

We now describe the parameter space for Harish-Chandra modules of strong
real forms of $G$.
By Theorem \ref{t:ldata} we need to describe the set $\caL$
(Definition \ref{d:L}).
We have done all of the work describing the one-sided parameter space 
$\X(G,\gamma)$
(Definition \ref{d:X}), and now we merely need to put the two sides
together. 

Fix basic data $(G,\gamma)$, and let $(\chG,\ch\gamma)$ be the dual
data (cf.~Section \ref{s:basic}).
Let $\X=\X(G,\gamma)$ and $\ch\X=\X(\chG,\ch\gamma)$.

\begin{definition}
\label{d:Z}
The {\it two-sided parameter space} is the set
\begin{equation}
\label{e:Z}
\caZ(G,\gamma)=\{(x,y)\in\X\times\ch\X\,|\,(\theta_{x,H})^t=-\theta_{y,\ch
  H}\}.
\end{equation}
\end{definition}

We may now state the main result on the parametrization of
admissible representations of real forms of $G$.
This is an immediate consequence of 
Theorem \ref{t:ldata} and the Definitions of $\caL$ and $\caZ$.

Fix a set $\Lambda\subset P_{\text{reg}}$ of representatives of
$P/X^*(H)$. 

\begin{theorem}
\label{t:main}
\hfil\break
\noindent (1) There is a natural bijection
\begin{equation}
\caZ(G,\gamma)\overset{1-1}\longleftrightarrow \Pi(G,\gamma,\Lambda)
\end{equation}
(cf.~Definition \ref{d:pig}).

\medskip\noindent (2) Let $I\simeq \I/G$ be a set of representatives of the
  strong real forms as in \eqref{e:I}. For
  each $i\in I$ let $G_i(\R)$ be the real form of $G$ corresponding to
  the strong involution $\xi_i$ (cf.~Section \ref{s:involutions}).
There is a natural bijection
\begin{equation}
\caZ(G,\gamma)\overset{1-1}\longleftrightarrow \coprod_{i\in I}\Pi(G_i(\R),\Lambda).
\end{equation}

\medskip\noindent (3) Suppose $G$ is adjoint.
Write $G_1(\R),\dots, G_m(\R)$ for the (equivalence classes of) real
forms of $G$ in the given inner class, and choose representatives
$\lambda_1,\dots,\lambda_n\in \Preg$ for $P/R$.
There is a natural bijection:
\begin{equation}
\caZ(G,\gamma)\overset{1-1}\longleftrightarrow \coprod_{i,j}\Pi(G_i(\R),\lambda_j).
\end{equation}
\end{theorem}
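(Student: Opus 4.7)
The plan is to deduce all three parts from Theorem \ref{t:ldata} by establishing a natural bijection $\caZ(G,\gamma) \leftrightarrow \caL$. First, by Proposition \ref{p:X=S} applied to both $(G,\gamma)$ and $(\chG,\ch\gamma)$, we have canonical bijections $\X \leftrightarrow \caP$ and $\ch\X \leftrightarrow \ch\caP$ that send the $H$-orbit of $\xi \in \tX = \I \cap N^\Gamma$ to the $G$-orbit of $(\xi,B)$, and similarly on the dual side.

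The key verification is that under these bijections the compatibility condition defining $\caZ$ matches the compatibility condition defining $\caL$. Given a class in $\caP$, normalize its representative to $(\xi,B)$ with $\xi \in N^\Gamma$, so that $\theta_\xi$ preserves $H$; the restriction $\theta_\xi|_\h$ appearing in Definition \ref{d:L} depends only on the $H$-conjugacy class of $\xi$ (since $H$ is abelian and thus acts trivially on $\h$), so it coincides with $\theta_{x,H}$ from \eqref{e:thetaxH}. The same holds on the dual side. Hence the condition $(\theta_{x,H})^t = -\theta_{y,\ch H}$ of \eqref{e:Z} is precisely the condition $(\theta_{\xi,\h})^t = -\theta_{\eta,\ch\h}$ of Definition \ref{d:L}, and the product bijection restricts to $\caZ(G,\gamma) \overset{1-1}\longleftrightarrow \caL$. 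Composing with the bijection $\caL \leftrightarrow \Pi(G,\gamma,\Lambda)$ of Theorem \ref{t:ldata}(1) gives part (1).

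For part (2), the decomposition $\Pi(G,\gamma,\Lambda) = \coprod_{i\in I}\Pi(G_i(\R),\Lambda)$ follows from \eqref{e:PiG} restricted to infinitesimal characters in $\Lambda$, together with the standard identification of $(\g,K_{\xi_i})$-modules (Definition \ref{d:hc}) with admissible representations of the real form $G_i(\R)$ associated to $\xi_i$. Combined with part (1), this yields the bijection. For part (3), when $G$ is adjoint the character lattice $X^*(H)$ coincides with the root lattice $R$, so a set of representatives of $P/R$ in $\Preg$ is a valid choice of $\Lambda$ in part (2); moreover, by the lemma following Example \ref{ex:sl2_1}, for $G$ adjoint the map $\I/G \twoheadrightarrow \{\text{real forms}\}$ is a bijection, so the index set $I$ matches $\{G_1(\R),\dots,G_m(\R)\}$. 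Substituting these identifications into part (2) gives part (3).

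The substantive step is the compatibility check, and the only real content is the observation that $\theta_\xi|_\h$ is invariant under the $H$-conjugation action; the rest is bookkeeping once $\X \leftrightarrow \caP$ has been established in Proposition \ref{p:X=S}. The decomposition $\Pi(G,\gamma,\Lambda) = \coprod_i \Pi(G_i(\R),\Lambda)$ in part (2) is essentially a restatement of the definitions, and part (3) is a specialization of part (2) using adjointness.
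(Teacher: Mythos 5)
Your proposal is correct and follows exactly the route the paper intends: the paper dismisses the proof as "an immediate consequence of Theorem \ref{t:ldata} and the Definitions of $\caL$ and $\caZ$," and the details you supply --- the identification $\caZ\leftrightarrow\caL$ via Proposition \ref{p:X=S} on both sides, the check that the compatibility conditions agree after normalizing representatives into $N^\Gamma$, and the specializations giving (2) and (3) --- are precisely the bookkeeping being elided. Your observation that $\theta_\xi|_\h$ depends only on the $H$-orbit of $\xi$ is the same point the paper records in \eqref{e:thetaxH}.
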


For some examples see the end of Section \ref{s:waction}. 
Many more examples are worked out in detail in \cite{adams_snowbird}.

\medskip

We note that $\caZ(G,\gamma)$ can be viewed as a space of orbits as follows.
Let $I$ (resp. $\ch I$) be a set of representatives of $\I/G$ 
(resp. $\ch\I/\chG$), as in \eqref{e:I1}. 
For $i\in I$ (resp. $j\in \ch I$) let $K_i$ (resp. $\ch K_j$) be as in \eqref{e:I2}.
Then 

\begin{equation}
\label{e:Zorbit}
\caZ\subset 
\coprod_{i\in I}K_i\bs G/B\times
\coprod_{j\in \ch I}\ch K_j\bs \chG/\ch B.
\end{equation}

We now give the statement of Vogan Duality \cite{ic4} in this
setting. It is evident that the definition of $\caZ(G,\gamma)$ is
entirely symmetric in $G$ and $\ch G$:
the map $(x,y)\mapsto (y,x)$ is a bijection between
$\caZ(G,\gamma)$ and $\caZ(\chG,\ch\gamma)$.

\begin{corollary}[Vogan Duality]
\label{c:duality}
Fix a set of representatives $\Lambda\subset\Preg$  of
$P(G,H)/X^*(H)$, and
a set $\ch\Lambda\subset \chPreg$ of representatives
of $P(\chG,\ch H)/X^*(\ch H)$.
There is a natural bijection
\begin{equation}
\Pi(G,\gamma,\Lambda)\leftrightarrow \Pi(\chG,\ch\gamma,\ch\Lambda).
\end{equation}
\end{corollary}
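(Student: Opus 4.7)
The plan is to derive the bijection by combining the main theorem applied to both $(G,\gamma)$ and $(\chG,\ch\gamma)$ with the manifest symmetry of the two-sided parameter space $\caZ$. Specifically, Theorem \ref{t:main}(1) applied to $(G,\gamma)$ yields
\begin{equation*}
\caZ(G,\gamma)\overset{1-1}\longleftrightarrow \Pi(G,\gamma,\Lambda),
\end{equation*}
and the same theorem applied to the dual data $(\chG,\ch\gamma)$ (using that the dual of $(\chG,\ch\gamma)$ is again $(G,\gamma)$, and that $P(\chG,\ch H)/X^*(\ch H)$ is precisely the lattice quotient required for the dual statement) gives
\begin{equation*}
\caZ(\chG,\ch\gamma)\overset{1-1}\longleftrightarrow \Pi(\chG,\ch\gamma,\ch\Lambda).
\end{equation*}
It therefore suffices to exhibit a natural bijection $\caZ(G,\gamma)\leftrightarrow \caZ(\chG,\ch\gamma)$.

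The candidate bijection is simply coordinate swap $(x,y)\mapsto (y,x)$, mapping $\X(G,\gamma)\times\X(\chG,\ch\gamma)$ to $\X(\chG,\ch\gamma)\times\X(G,\gamma)$. The step to verify is that the compatibility condition carves out the same subset on either side. By Definition \ref{d:Z}, $(x,y)\in\caZ(G,\gamma)$ iff $(\theta_{x,H})^t=-\theta_{y,\ch H}$ as maps $\ch\h\to\ch\h$ (using the identifications $\h=\h^{\vee*}$, $\h^*=\ch\h$ from Section \ref{s:basic}), while $(y,x)\in\caZ(\chG,\ch\gamma)$ iff $(\theta_{y,\ch H})^t=-\theta_{x,H}$. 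Taking the transpose of the first equation and using that transposition is an involution on $\End(\h)$ (inverse to transposition on $\End(\ch\h)$), the two conditions are equivalent. Hence coordinate swap restricts to a bijection on the respective $\caZ$'s.

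Composing the three bijections produces the desired natural bijection
\begin{equation*}
\Pi(G,\gamma,\Lambda)\overset{1-1}\longleftrightarrow \caZ(G,\gamma)\overset{1-1}\longleftrightarrow \caZ(\chG,\ch\gamma)\overset{1-1}\longleftrightarrow \Pi(\chG,\ch\gamma,\ch\Lambda).
\end{equation*}
The only mild subtleties are bookkeeping: one should check that the canonical identifications $\h\simeq\h^{\vee*}$ and $\h^*\simeq\ch\h$ are compatible with the adjoint map $X\mapsto X^t$ appearing in \eqref{e:adjoint}, and that the lattice $P(G,H)/X^*(H)$ on the $G$-side plays the role symmetric to $P(\chG,\ch H)/X^*(\ch H)$ on the dual side (both arising from the same quotient of the weight lattice by the character lattice of the chosen Cartan). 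No genuine obstacle arises, since everything has been arranged so that $\caZ$ is intrinsically symmetric; indeed, the main work was done in proving Theorem \ref{t:main}, and the corollary is essentially a formal consequence.
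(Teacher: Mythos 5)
Your proof is correct and is essentially the paper's own argument: the paper observes immediately before the corollary that the coordinate swap $(x,y)\mapsto(y,x)$ is a bijection $\caZ(G,\gamma)\leftrightarrow\caZ(\chG,\ch\gamma)$, and the corollary follows by applying Theorem \ref{t:main} to both $(G,\gamma)$ and $(\chG,\ch\gamma)$. Your check that the compatibility condition of Definition \ref{d:Z} is symmetric under transposition is exactly the (elementary) point the paper treats as evident.
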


This bijection is compatible, in a precise sense, with the 
duality of \cite[Theorem 13.13]{ic4}. 
See \cite{bowdoin} and \cite{abv} for details.
See the Table in example \ref{ex:sl2_2} for the case of
$SL(2)/PSL(2)$, and 
\cite{adams_snowbird} for some more elaborate examples.
\medskip

\sec{Fibers of the map $p:\X\rightarrow \I$}
\label{s:fibers}

In Sections  \ref{s:fibers} through \ref{s:cayley} we  study the space
$\X$ in  more detail,  and relate it  to structure  and representation
theory of real groups. We begin with a study of  the fibers of $\tilde
p$ and $p$. We work in the setting of Section  \ref{s:onesided}.

Fix $\tau\in \I_W$.
Let $\tX_\tau=\wt p\inv(\tau)$ and $\X_\tau=p\inv(\tau)$.
For $z\in Z(G)$ let $\X_\tau(z)=\X_\tau\cap \X(z)=\{x\in \X\,|\,
p(x)=\tau, x^2=z\}$.
Let
\begin{equation}
\begin{aligned}
\label{e:H}
H'_{-\tau}=&\{h\in H\,|\, h\tau(h)\in Z(G)\}\\
H_{-\tau}=&\{h\in H\,|\, h\tau(h)=1\}\\
A_\tau=&\{h\tau(h\inv)\,|\, h\in H\}.
\end{aligned}
\end{equation}
Note that $A_\tau$ is a connected  torus, and is the identity component of $H_{-\tau}$.

\begin{proposition}
\label{p:fibers}
\hfil\break
\begin{enumerate}
\item  $H_{-\tau}'$ acts simply transitively on $\tX_\tau$,
\item $H_{-\tau}'/A_\tau$ acts simply transitively on $\X_\tau$,
\item Fix $z\in Z(G)$. If $\X_\tau(z)$ is non-empty then $H_{-\tau}/A_\tau$ acts simply
  transitively on $\X_\tau(z)$. If $z\not\in Z(G)^\Gamma$ (the
  $\Gamma$-invariants of $Z(G)$) then
  $\X_\tau(z)$ is empty.
\end{enumerate}
In particular  $|\X_\tau(z)|$ is a power of $2$
(or $0$).
If $Z(G)^\Gamma$ is finite 
then $\X$ is a finite set.
\end{proposition}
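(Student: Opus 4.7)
My plan is to use the concrete description of $\tX$ as $\{g\delta : g\in N,\ g\theta(g)\in Z(G)\}$ from \eqref{e:concrete}, with $\theta=\int(\delta)$. Fix $\tau\in\I_W$; we may assume $\tX_\tau$ is non-empty (otherwise all assertions are vacuous) and pick a basepoint $\xi_0\in\tX_\tau$. Any $\xi\in N^\Gamma$ lifting $\tau$ is uniquely of the form $h\xi_0$ with $h\in H$, and a direct calculation using $\xi_0 h\xi_0^{-1}=\tau(h)$ (where $\tau$ now denotes the corresponding involution of $H$) gives $(h\xi_0)^2=h\tau(h)\,\xi_0^2$. Since $\xi_0^2\in Z(G)$, we conclude that $h\xi_0\in\tX$ iff $h\tau(h)\in Z(G)$, i.e.\ $h\in H'_{-\tau}$. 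This furnishes a bijection $H'_{-\tau}\overset{1-1}\longleftrightarrow\tX_\tau$, $h\mapsto h\xi_0$, equivariant for left multiplication, proving~(1).

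For (2), I would translate the $H$-conjugation action on $\tX_\tau$ through this bijection: since $k(h\xi_0)k^{-1}=k\tau(k^{-1})\cdot h\xi_0$, conjugation by $k\in H$ becomes left translation by $k\tau(k^{-1})$, and $\{k\tau(k^{-1}):k\in H\}=A_\tau$ by definition. Therefore $\X_\tau=\tX_\tau/H\simeq H'_{-\tau}/A_\tau$ as a simply transitive $(H'_{-\tau}/A_\tau)$-set. For (3), if $\xi=g\delta$ satisfies $\xi^2=z\in Z(G)$, centrality of $z=g\theta(g)$ forces $g\theta(g)=\theta(g)g$ and hence $\theta(z)=z$, so $z\in Z(G)^\Gamma$; this yields the emptiness claim. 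Conversely, once $\xi_0\in\X_\tau(z)$ exists, the condition $(h\xi_0)^2=z$ reduces to $h\tau(h)=1$, so $\X_\tau(z)\simeq H_{-\tau}/A_\tau$ with the induced simply transitive action.

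To finish, I verify that $H_{-\tau}/A_\tau$ is an elementary abelian $2$-group: if $\tau(h)=h^{-1}$ then $h^2=h\cdot\tau(h^{-1})\in A_\tau$. A Lie algebra computation identifies $\Lie(A_\tau)=(1-\tau)\h$ with the $(-1)$-eigenspace of $\tau$, which is also $\Lie(H_{-\tau})$; hence $A_\tau$ is the identity component of $H_{-\tau}$, making the quotient a finite group of the form $(\Z/2)^k$. For the last assertion, $h\mapsto h\tau(h)$ is a homomorphism $H'_{-\tau}\to Z(G)$ whose image lies in $Z(G)^\Gamma$ (because $\tau$ acts on $Z(G)$ as $\theta$) and whose kernel is $H_{-\tau}$; finiteness of $Z(G)^\Gamma$ then forces $\X_\tau=H'_{-\tau}/A_\tau$ to be finite, and finiteness of $\I_W\subset W^\Gamma$ gives finiteness of $\X=\coprod_{\tau\in\I_W}\X_\tau$. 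The main obstacle is purely bookkeeping --- keeping the $\delta$-twist in multiplication of $\GGamma$ and the distinction between the $\tau$-fixed and $\tau$-anti-invariant subgroups straight --- after which the whole proposition reduces to the two identities $\xi_0 h\xi_0^{-1}=\tau(h)$ and $k\tau(k^{-1})\in A_\tau$ together with elementary linear algebra on the involution $\tau$ of $H$.
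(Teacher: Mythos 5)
Your proof is correct and follows essentially the same route as the paper: fix a basepoint $\xi_0\in\tX_\tau$, identify $\tX_\tau$ with $H'_{-\tau}$ via $h\mapsto h\xi_0$ using $(h\xi_0)^2=h\tau(h)\xi_0^2$, and observe that $H$-conjugation becomes left translation by $A_\tau=\{k\tau(k^{-1})\}$. You merely spell out a few facts the paper leaves implicit or records elsewhere (that $\xi^2\in Z(G)^\Gamma$, that $A_\tau$ is the identity component of $H_{-\tau}$, and the finiteness via the homomorphism $h\mapsto h\tau(h)$ into $Z(G)^\Gamma$), all of which are accurate.
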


\begin{proof}
Choose $\xi\in \tX_\tau$.
Then $\tX_\tau=\{h\xi\,|\, h\in H, (h\xi)^2\in Z(G)\}=
\{h\xi\,|\, h\tau(h)\xi^2\in Z(G)\}$.
The first claim follows.

For $h\in H$ we have $h\xi h\inv=h\tau(h\inv)\xi$.
This shows  that the stabilizer in $H_{-\tau}'$,
for the left multiplication action of $H$, 
of the image of $\xi$ in $\X$  
is $\{h\tau(h\inv)\,|\, h\in H\}=A_\tau$. This proves (2), and 
(3) follows immediately from the fact that
$(h\xi)^2=h\tau(h)\xi^2$, and the fact that $\xi^2\in Z(G)^\Gamma$.
The assertion about $|\X_\tau(z)|$ is clear, since 
$H_{-\tau}/A_\tau$ is an elementary abelian two-group.

By (3) $\X$ is the union of the finite sets $\X_\tau(z)$ for $\tau\in
\I_W$ and $z\in Z(G)^\Gamma$, and the final conclusion follows.
\end{proof}

\begin{remarkplain}
\label{r:H2}
Let $T_\tau$ be the identity component of the fixed points of $\tau$
acting on $H$. Then
$T_\tau$ and $A_\tau$ are connected tori,
$H=T_\tau A_\tau$ and $A_\tau\cap T_\tau$ is an elementary abelian two group.
The group in Proposition \ref{p:fibers} (3) is 
\begin{equation}
\label{e:H-tau}
H_{-\tau}/A_\tau\simeq T_\tau(2)/A_\tau\cap T_\tau.
\end{equation}
If we write the real torus corresponding to $\tau$ as 
$(\R^\times)^a\times (S^1)^b\times(\C^\times)^c$ then 
$T_\tau(2)\simeq (\Ztwo)^{b+c}$, $A_\tau\cap T_\tau\simeq (\Ztwo)^c$
and $T_\tau(2)/A_\tau\cap T_\tau\simeq (\Ztwo)^b$.
\end{remarkplain}

\begin{remarkplain}
\label{r:dualH}
It is helpful to note that 
\begin{equation}
H_{-\tau}/A_\tau\simeq[\ch H(\R)/\ch H(\R)^0]^\wedge
\end{equation}
where $\ch H$ is the dual torus to $H$,
with real form $\ch H(\R)$ defined by the Cartan involution $-\tau^\vee$.  
\end{remarkplain}
\sec{Action of $W$ on $\X$}
\label{s:waction}

We now study the action of $W$ on $\X$, which plays an important
role.  We begin with some definitions and terminology.

Fix $\tau\in \I_W$.
Let 
\begin{equation}
\label{e:W}
\begin{aligned}
\Delta_i&=\{\alpha\in \Delta\,|\,\tau(\alpha)=\alpha\}\text{ (the
  imaginary roots)}\\
\Delta_r&=\{\alpha\in \Delta\,|\,\tau(\alpha)=-\alpha\}
\text{ (the real roots)}\\
\Delta_{cx}&=\{\alpha\in \Delta\,|\,\tau(\alpha)\ne\pm\alpha\}
\text{ (the complex roots)}\\
\Delta_i^+&=\Delta_i\cap\Delta^+,\,
\Delta_r^+=\Delta_r\cap\Delta^+\\
W_i&=W(\Delta_i)\\
W_r&=W(\Delta_r).\\
\end{aligned}
\end{equation}
We will write $\Delta_{i,\tau}, W_{i,\tau}$ etc. to indicate the
dependence on $\tau$. We also  refer to the $\tau$-imaginary,
$\tau$-real roots, etc.

Let $\rho_i=\frac12\sum_{\alpha\in\Delta_i^+}\alpha$, and
$\ch\rho_r=\frac12\sum_{\alpha\in\Delta_r^+}\ch\alpha$.
As in 
\cite[Proposition 3.12]{ic4} let
\begin{equation}
\label{e:WC}
\begin{aligned}
\Delta_C&=\{\alpha\in\Delta\,|\, \langle\rho_i,\ch\alpha\rangle
=\langle\alpha,\ch\rho_r\rangle=0\}\subset \Delta_{cx}
\\
W_C&=W(\Delta_C)
\end{aligned}
\end{equation}
This is a complex root system.

Now $\tau$ acts on  $W$, 
 and we let $W^\tau$ be the
fixed points.
By \cite[Proposition 3.12]{ic4} 
\begin{equation}
\label{e:Wtau}
W^\tau=(W_C)^\tau\ltimes (W_i\times W_r).
\end{equation}
Note that $W_i$ and $W_r$ are Weyl groups of the root systems
$\Delta_i$ and $\Delta_r$ respectively; also $(W_C)^\tau$ is
isomorphic to the Weyl group of the root system $(\Delta_C)^\tau$ \cite{ic4}.

Fix $\xi\in\tX_\tau$ and let $\theta=\theta_\xi$, $K=K_\xi$.
For $\alpha\in\Delta_i$ let $X_\alpha$ be an $\alpha$-root vector.
Then  $\theta_{\xi}(X_\alpha)$
only depends on the image $x$ of $\xi$ in $\X$.
We say $\gr_x(\alpha)=0$ if
$\theta_{\xi}(X_\alpha)=X_\alpha$ and 
1 if $\theta_x(X_\alpha) = -X_\alpha$.
This is a $\Ztwo$-grading of $\Delta_i$ in the sense that if
$\alpha,\beta,\alpha+\beta\in\Delta_i$ then
$\gr_x(\alpha+\beta)=\gr_x(\alpha)+\gr_x(\beta)\pmod 2$. 
These are the compact and noncompact imaginary roots.

Let $W(K,H)=\Norm_K(H)/H\cap K$. This is isomorphic to 
$W(\GR,H(\R))$ where $\GR$ is the real form of $G$ corresponding
to $\theta$, and we call it the real Weyl group.
(This contains the Weyl group of the root system of real roots.)
Clearly $W(K,H)\subset W^\tau$. 
Let $M=\Cent_G(A_\tau)$ (cf.~\ref{e:H}).
By 
\cite[Proposition 4.16]{ic4},
\begin{equation}
\label{e:realweyl}
W(K,H)=(W_C)^\tau\ltimes(W(M\cap K,H)\times W_r).
\end{equation}
We have
\begin{equation}
\label{e:wmkh}
W_{i,c}\subset W(M\cap K,H)\simeq W_{i,c}\ltimes \mathcal A(H)\subset W_i
\end{equation}
where $W_{i,c}$ is the Weyl group of the compact imaginary roots
(i.e. $\gr_\xi(\alpha)=0$) 
and
$\mathcal A(H)$ is a certain two-group \cite{ic4}.
This describes $W(K,H)$ in terms of the Weyl groups $(W_C)^\tau$,
$W_r$ and $W_{i,c}$, which are straightforward to compute, and the
two-group $\mathcal A(H)$. For more information on $W(K,H)$ see 
Proposition \ref{p:realweyl}.

Let 
\begin{equation}
\caH=\{(\xi,H_1)\,|\, \xi\in \I, H_1\text{ a }\theta_\xi\text{-stable Cartan subgroup}\}/G.
\end{equation}
With $I,\theta_i$ and $K_i$ as in \eqref{e:I} we may conjugate $\xi$
to some $\xi_i$, and this shows
\begin{equation}
\caH\simeq\coprod_{i\in I}\, \{\text{$\theta_i$-stable Cartan subgroups of $G$}\}/K_i.
\end{equation}
On the other hand every Cartan subgroup is conjugate to $H$, and the normalizer of $H$
is $N$, so 
\begin{equation}
\caH\simeq\I\cap N^\Gamma/N=\tX/N\simeq\X/W
\end{equation}
(cf.~ \eqref{e:txn}).

With notation as in Corollary \ref{c:I} for $i\in I$ we
may assume $\xi_i\in \tX$, and define $x_i\in \X$ and $\X_i=\X[x_i]$
accordingly. We conclude
\begin{proposition}
\label{p:X/W}
For each $i\in I$ we have
\begin{equation}
\X_i/W\leftrightarrow\{\text{$\theta_i$-stable Cartan
  subgroups of $G$}\}/K_i.
\end{equation}
Taking the union over $i\in I$ gives
\begin{equation}
\X/W\leftrightarrow \coprod_i\, \{\text{$\theta_i$-stable Cartan
  subgroups of $G$}\}/K_i.
\end{equation}
\end{proposition}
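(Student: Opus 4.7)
The plan is to upgrade the already-established bijection $\caH \overset{1-1}\longleftrightarrow \X/W$ (displayed in the paragraph preceding the proposition) to a statement that is compatible with the decomposition of $\I$ into $G$-orbits $\coprod_{i\in I}\I_i$. Since both $\caH$ and $\X$ are built from $\I$, this decomposition should pull back to both sides and the bijection should respect it index-by-index.

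First I would define $\caH_i = \{(\xi,H_1)\,|\,\xi\in\I_i,\ H_1\text{ a }\theta_\xi\text{-stable Cartan}\}/G$ so that $\caH = \coprod_{i\in I}\caH_i$. Using the fact that every element of $\I_i$ is $G$-conjugate to $\xi_i$ and that $\Stab_G(\xi_i)=K_i$, the map $(\xi,H_1)\mapsto g\inv H_1 g$ for any $g$ with $g\xi_i g\inv = \xi$ is well-defined up to $K_i$ and gives the identification
\begin{equation*}
\caH_i \overset{1-1}\longleftrightarrow \{\theta_i\text{-stable Cartan subgroups of }G\}/K_i.
\end{equation*}
This is the right-hand side of the proposed bijection, now pulled apart according to $i$.

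Next I would identify $\caH_i$ with $\X_i/W$. The chain of bijections $\caH\simeq \tX/N\simeq \X/W$ from the preceding paragraph was built by: (a) choosing $\xi\in\I\cap N^\Gamma=\tX$ normalizing $H$ (possible by the Matsuki-type result \cite[Lemma 6.18]{abv}, which provides a $\theta_\xi$-stable Cartan in any Borel and hence, after $G$-conjugation, in $H$'s conjugacy class), then (b) passing to $H$-orbits to land in $\X$, with the residual $N$-action factoring through $W$. Because every step preserves the $G$-conjugacy class of $\xi$, it restricts to the subset where $\xi\in\I_i$. On the $\X$ side the resulting subset is exactly $\X_i=\X[x_i]$ by definition \eqref{e:Xx}, and on the $\caH$ side it is exactly $\caH_i$. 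This gives $\X_i/W \overset{1-1}\longleftrightarrow \caH_i$, and composing with the identification of the previous paragraph yields the first claim. Taking the disjoint union over $i\in I$ gives the second.

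The only step that is not completely formal is checking that $\X_i/W$ is the correct restriction of $\X/W$ under the bijection $\X/W\simeq\caH$, i.e.\ that the $G$-conjugacy class of the lift $\xi$ of $x\in\X$ is a well-defined invariant of the $W$-orbit of $x$. This is essentially the content of the remark after \eqref{e:Xx}: two $H$-conjugate strong involutions are $G$-conjugate, and acting by $W$ on $\X$ (via a lift to $N$) preserves the $G$-conjugacy class by \eqref{e:cross}. I expect this compatibility, together with unpacking the Matsuki step in the opposite direction (given a $\theta_i$-stable Cartan $H_1$, conjugate $H_1$ to $H$ to produce $\xi\in\tX\cap\I_i$), to be the only genuine content beyond bookkeeping.
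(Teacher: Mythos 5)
Your proposal is correct and follows essentially the same route as the paper: the paper's argument is precisely the two descriptions of $\caH$ given in the paragraph preceding the proposition (conjugating $\xi$ to a representative $\xi_i$ on one hand, and conjugating the Cartan subgroup to $H$ on the other), matched piece-by-piece over $i\in I$. Your filling-in of the compatibility check — that the $G$-conjugacy class of the lift is a $W$-orbit invariant because the cross action is conjugation by elements of $N\subset G$ — is exactly the bookkeeping the paper leaves implicit.
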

Recall that by \eqref{e:Xxi} and Proposition \ref{p:kgb} $\X_i\simeq K_i\bs G/B$.

\begin{proposition}[\cite{richardson_springer}]
\label{p:X_i/W}
The map $p:\X_i/W\mapsto \I_W/W$ is injective. If $\theta_i$ is
quasisplit it is a bijection.
\end{proposition}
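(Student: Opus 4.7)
The plan is to interpret $p$ as a map between structural invariants of Cartan subgroups, then reduce to the classical Richardson--Springer parametrization. Using Proposition \ref{p:X/W}, I would identify $\X_i/W$ with the set of $K_i$-conjugacy classes of $\theta_i$-stable Cartan subgroups of $G$: given $x\in\X_i$ with preimage $\xi=g\xi_ig^{-1}\in\tX$, the associated Cartan is $H_1=g^{-1}Hg$, which is $\theta_i$-stable precisely because $\xi\in N^\Gamma$. Via \eqref{e:concrete}, $\I_W$ is identified with the set of twisted involutions in $W$, and the conjugation action of $W$ on $\I_W$ becomes twisted conjugation \eqref{e:twistedconjugation}. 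Under these identifications $p$ sends $H_1$ to the twisted involution obtained by transporting $\theta_i|_{H_1}$ back to $H$ via any $g\in G$ with $gH_1g^{-1}=H$; the ambiguity in the choice of $g$ is exactly conjugation by an element of $N$, which becomes twisted conjugation on $W$, so the image in $\I_W/W$ is well-defined.

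For injectivity I would show that $\theta_i$-stable Cartan subgroups $H_1,H_2$ whose associated twisted involutions lie in the same twisted $W$-orbit are $K_i$-conjugate. Pick $g\in G$ with $gH_1g^{-1}=H_2$. The $\theta_i$-stability of both Cartans forces $n:=g^{-1}\theta_i(g)\in\Norm_G(H_1)$, and the image $\bar n\in W(G,H_1)$ is precisely the twisted-conjugation discrepancy between the two transported twisted involutions. The hypothesis therefore permits us to replace $g$ by $gn'$ for a suitable $n'\in\Norm_G(H_1)$ to arrange $\bar n=1$, i.e.\ $g^{-1}\theta_i(g)\in H_1$. A final adjustment of $g$ by an element $h\in H_1$, made possible by Proposition \ref{p:fibers}(1)--(3) together with the definition of $A_{\tau_1}$ in \eqref{e:H} (so that $h\mapsto h\theta_i(h^{-1})$ surjects onto $A_{\tau_1}$), drives $g^{-1}\theta_i(g)$ to the identity and yields $g\in K_i$.

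For surjectivity when $\theta_i$ is quasisplit, I combine Lemma \ref{l:pX} with the characterization of quasisplit involutions as those realizing every twisted involution (Lemma \ref{l:dandq}(2)). Given $\tau\in\I_W$, Lemma \ref{l:pX} supplies $x\in\X$ with $p(x)=\tau$; a priori $x$ may lie in $\X_j$ for some $j\ne i$, but by Proposition \ref{p:fibers}(3) the fiber $\X_\tau$ is a single $H'_{-\tau}/A_\tau$-torsor, and varying $x$ inside $\X_\tau$ sweeps out all possible values of $x^2$ in the appropriate coset of $Z(G)^\Gamma$. The quasisplit hypothesis on $\xi_i$ guarantees that the value $x^2=\xi_i^2$ is attained, yielding $x\in\X_i$ with $p(x)=\tau$.

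The principal technical point is the injectivity argument: verifying that twisted $W$-conjugation on $\I_W$ detects $K_i$-conjugacy of $\theta_i$-stable Cartan subgroups of $G$. This equivalence is essentially the Richardson--Springer parametrization of $K$-conjugacy classes of $\theta$-stable Cartan subgroups by twisted involutions, and once it is in hand the rest of the proposition — including the quasisplit surjectivity — follows from the fiber structure of Proposition \ref{p:fibers} and the realization property of quasisplit forms.
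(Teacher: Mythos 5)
Your reduction of the proposition to a statement about $\theta_i$-stable Cartan subgroups via Proposition \ref{p:X/W} is reasonable and close in spirit to the paper, but both halves of the argument have genuine gaps.

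\textbf{Injectivity.} The fatal step is the last one. After arranging $a:=g\inv\theta_i(g)\in H_1$ you try to kill $a$ by replacing $g$ with $gh$, $h\in H_1$, which requires $a\in\{h\theta_i(h\inv)\,|\,h\in H_1\}$, the connected torus playing the role of $A_\tau$ in \eqref{e:H}. But all you know is that $a$ lies in $H_1$ and has the form $g\inv\theta_i(g)$ with $g\in G$; such an element need not lie in that connected torus. Already for $G=SL(2)$, $\theta=\int(\diag(i,-i))$ and $H$ diagonal one has $\{h\theta(h\inv)\}=\{1\}$, while $\sigma\theta(\sigma\inv)=-\Id$ for $\sigma=\left(\begin{smallmatrix}0&1\\-1&0\end{smallmatrix}\right)$. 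The correct statement is that such an $a$ can be written as $n\theta_i(n\inv)$ with $n$ in the full normalizer of the Cartan --- this is exactly \cite[Proposition 2.3]{richardson_springer}, which is the one nontrivial input in the paper's proof (the paper notes that $\xi'=h\xi$ and $\xi'=g\xi g\inv$ force $g\theta_\xi(g\inv)=h\in H$ and then invokes that proposition to produce $n\in N$ with $\xi'=n\xi n\inv$). Your two-step adjustment --- first trivialize the Weyl-group part of the discrepancy, then the torus part --- cannot be decoupled in this way: the Weyl-group component of the eventual $n$ is in general forced to be nontrivial even after the torus part has been normalized.

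\textbf{Surjectivity.} Two problems. First, quoting Lemma \ref{l:pX} is circular: the paper derives that lemma from this very proposition together with the Cayley-transform argument at the end of Section \ref{s:cayley}. Second, and independently, the inference that $p(x)=\tau$ and $x^2=\xi_i^2$ imply $x\in\X_i$ is false: Proposition \ref{p:fibers}(3) describes the fiber $\X_\tau(z)$ as a torsor but says nothing about which $G$-conjugacy classes its elements belong to. Distinct strong real forms can share the same central square and both meet the same fiber; for instance in $SL(4)$ with $\gamma=1$ the strong involutions $\Id$ (compact form) and $\diag(1,1,-1,-1)$ (quasisplit form) both square to $\Id$ and both lie over $\tau=1$. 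What actually has to be shown is that the quasisplit class meets $\X_\tau$ for every $\tau$; the paper does this by running a chain of Cayley transforms $c^\alpha$ starting from an arbitrary element of $\X_\tau$ (each step preserving the $G$-conjugacy class by Lemma \ref{l:cayley1}(2)) until reaching a fiber with no imaginary roots, at which point \cite[Proposition 6.24]{av1} identifies the resulting involution as quasisplit.
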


\begin{remarkplain}
This says that the conjugacy classes of Cartan subgroups of any
real form of $G$ embed in those of the quasisplit form.
See \cite[page 340]{platonov_rapinchuk}.
\end{remarkplain}
\begin{proof}
For injectivity we have to show that 
$\xi,\xi'\in \tX$, $\wt p(\xi)=\wt p(\xi')$ and
$\xi'=g\xi g\inv$ $(g\in G)$ implies
$\xi'=n\xi n\inv$ for some $n\in N$.
The condition $\wt p(\xi)=\wt p(\xi')$ implies $\xi'=h\xi$ for some
$h\in H$,  so $g\xi g\inv=h\xi$, i.e. $g\theta_\xi(g\inv)=h$.
By \cite[Proposition
2.3]{richardson_springer}
there exists $n\in N$ satisfying $h=n\theta_{\xi}(n\inv)$, and then
$\xi'=n\xi n\inv$.

We defer the proof of surjectivity in the quasisplit case to the end
of Section \ref{s:cayley}.
\end{proof}

The real Weyl group (see the discussion
preceding \eqref{e:realweyl}) appears naturally in our setting.
Fix $\xi\in\tX$, let $K=K_\xi$, and let $x$ be the image of $\xi$ in $\X$.

\begin{proposition}
\label{p:realweyl}
$W(K,H)\simeq \Stab_W(x)$.
\end{proposition}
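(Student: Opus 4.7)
The plan is to construct an explicit homomorphism $\phi\colon \Norm_K(H)\to \Stab_W(x)$ whose kernel is $H\cap K$ and verify it is surjective, which will give the claimed isomorphism directly from $W(K,H)=\Norm_K(H)/(H\cap K)$.

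First I would observe that $\Norm_K(H)=K\cap N$, since $H$ is $\theta_\xi$-stable (because $\xi\in N^\Gamma$ normalizes $H$). So the inclusion $\Norm_K(H)\hookrightarrow N$ followed by the quotient $N\twoheadrightarrow W$ gives a homomorphism $\phi$ with kernel exactly $\Norm_K(H)\cap H=H\cap K$.

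Next I would check that the image lies in $\Stab_W(x)$. If $n\in \Norm_K(H)$ then $n\in K_\xi$, so $n\xi n\inv=\xi$; taking the image in $\X$, the cross action formula \eqref{e:cross} gives $w\times x=x$, where $w=\phi(n)$.

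For surjectivity, let $w\in \Stab_W(x)$ and lift $w$ to $n\in N$. Then $n\xi n\inv$ and $\xi$ have the same image in $\X=\tX/H$, so there exists $h\in H$ with $n\xi n\inv=h\xi h\inv$. Setting $n'=h\inv n\in N$, I compute $n'\xi n'^{-1}=\xi$, so $n'\in K_\xi\cap N=\Norm_K(H)$, and $\phi(n')=w$ because $h\in H$. This is the one place where a small verification is needed, but it is essentially formal once the cross action is unwound.

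The only genuine content (and the mild obstacle) is confirming that the condition ``same image in $\X$'' really means ``conjugate by a single element of $H$'', which is exactly the definition of $\X=\tX/H$, so nothing beyond the definitions is required. Combining the three steps yields $\Norm_K(H)/(H\cap K)\overset{\sim}{\to}\Stab_W(x)$, which is the desired isomorphism.
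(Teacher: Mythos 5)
Your proof is correct and follows essentially the same route as the paper's: both identify $\Norm_K(H)$ with $\Stab_N(\xi)$ and then pass to $W=N/H$, with your surjectivity step being exactly the verification of the paper's assertion that $\Stab_N(\xi)H=\Stab_N(x)$ (which the paper leaves as ``easy to see''). No gaps.
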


\begin{proof}
We have
\begin{equation}
\begin{aligned}
W(K,H)&=\Norm_K(H)/H\cap K\\
&=\Stab_N(\xi)/\Stab_H(\xi)\\
&=\Stab_N(\xi)H/H.
\end{aligned}
\end{equation}
It is easy to see that $\Stab_N(\xi)H=\Stab_N(x)$, so this equals
$$
\Stab_N(x)/H\simeq \Stab_{N/H}(x)=\Stab_W(x).
$$
\end{proof}

Now fix $\tau\in\I_W$. By Proposition \ref{p:realweyl} and
\eqref{e:realweyl} we see $(W_C)^\tau$ and $W_r$ act trivially on
$\X_\tau$. 
It is worth noting that we can see this directly.

\begin{proposition}
\label{p:trivially}
Both $(W_C)^\tau$ and $W_r$ act trivially on $\X_\tau$.  
\end{proposition}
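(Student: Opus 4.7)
The plan is to turn the triviality of the action into an existence question for lifts inside $N \cap K_\xi$, and then to exhibit such lifts using the rank-one subgroups attached to the relevant roots. Using the concrete description \eqref{e:concrete}, represent $\xi \in \tX_\tau$ as $\xi = g\delta$ with $g \in N$, $g\theta(g) \in Z(G)$, and $g$ mapping to the chosen representative of $\tau$ in $W$; the equivalence defining $\X$ becomes $g \sim h g \theta(h^{-1})$ for $h \in H$, and a direct computation shows that the cross action \eqref{e:cross} of $y \in W$ (with lift $n_y \in N$) is $[g] \mapsto [n_y g \theta(n_y^{-1})]$. Hence $y \times x = x$ if and only if $n_y g \theta(n_y^{-1}) = h g \theta(h^{-1})$ for some $h \in H$, which rearranges to the condition $h^{-1} n_y \in K_\xi \cap N$. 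So it suffices to produce, for each generator of $W_r$ and of $(W_C)^\tau$, a lift inside $N \cap K_\xi$.

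For $W_r$, let $\alpha$ be a $\tau$-real root and let $G_\alpha$ be the corresponding rank-one subgroup. Since $\theta_\xi$ agrees with $\tau$ on $H$ and $\tau(\alpha) = -\alpha$, the induced involution on $G_\alpha$ inverts $H \cap G_\alpha$ and is therefore (conjugate to) the Cartan involution of the split real form of $G_\alpha$. The classical $SL(2)$ computation — in $SL(2,\C)$ with involution $g \mapsto {}^t g^{-1}$, the element $\bigl(\begin{smallmatrix}0 & 1 \\ -1 & 0\end{smallmatrix}\bigr)$ lies in the fixed-point group $SO(2,\C)$ and normalizes the diagonal Cartan — yields a lift of $s_\alpha$ in $N \cap K_\xi$, and the same works in $PGL(2)$.

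For $(W_C)^\tau$, I invoke the fact (a consequence of \cite[Proposition 3.12]{ic4}) that $(W_C)^\tau$ is generated by the products $s_\alpha s_{\tau(\alpha)}$ with $\alpha \in \Delta_C$. By the orthogonality conditions in the definition \eqref{e:WC} of $\Delta_C$, for such $\alpha$ neither $\alpha + \tau(\alpha)$ nor $\alpha - \tau(\alpha)$ is a root, so the subgroups $G_\alpha$ and $G_{\tau(\alpha)}$ commute. Pick any lift $n_\alpha \in N \cap G_\alpha$ of $s_\alpha$ and set $n = n_\alpha \cdot \theta_\xi(n_\alpha)$; then $\theta_\xi(n_\alpha) \in N \cap G_{\tau(\alpha)}$ is a lift of $s_{\tau(\alpha)}$ that commutes with $n_\alpha$, so $\theta_\xi(n) = \theta_\xi(n_\alpha)\, n_\alpha = n_\alpha\, \theta_\xi(n_\alpha) = n$. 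Hence $n \in N \cap K_\xi$ is the required lift of $s_\alpha s_{\tau(\alpha)}$.

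The main obstacle is the $(W_C)^\tau$ case, and specifically the commutativity of $G_\alpha$ and $G_{\tau(\alpha)}$ for $\alpha \in \Delta_C$: this is precisely what the double orthogonality in \eqref{e:WC} is designed to enforce, and if either $\alpha + \tau(\alpha)$ or $\alpha - \tau(\alpha)$ were a root the symmetrized element $n_\alpha \theta_\xi(n_\alpha)$ would fail to be $\theta_\xi$-fixed without an extra correction term from the non-trivial commutator inside the larger root subgroup.
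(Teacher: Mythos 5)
Your proof is correct and follows essentially the same route as the paper's: you reduce triviality of the cross action to exhibiting a lift of each generator inside $N\cap K_\xi$, handle $(W_C)^\tau$ via the generators $s_\alpha s_{\tau(\alpha)}$ together with the commutation of $G_\alpha$ and $G_{\tau(\alpha)}$ forced by the double orthogonality in \eqref{e:WC} (your element $n_\alpha\theta_\xi(n_\alpha)$ is exactly the paper's $\sigma_\alpha\sigma_{\tau(\alpha)}$ with $\sigma_{\tau(\alpha)}=\xi\sigma_\alpha\xi\inv$), and dispose of $W_r$ by the rank-one $SL(2)$ computation that the paper invokes but omits. The only addition is your explicit up-front reformulation of ``$y\times x=x$'' as the existence of a lift of $y$ in $N\cap K_\xi$, which the paper uses implicitly.
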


This proof was communicated to us by David Vogan.

\begin{proof}
Fix $\xi\in\tX_\tau$.
The group $(W_C)^\tau$ is generated by elements $s_\alpha
s_{\tau\alpha}$ where $\alpha\in\Phi_C$. So suppose $\alpha\in\Phi_C$
and let $\sigma_\alpha\in N$ be a preimage of $s_\alpha\in W$.
Let $\sigma_{\tau(\alpha)}=\xi\sigma_\alpha\xi\inv$.
Note that $\alpha+\tau(\alpha)$ is not a root, since it would have to
be imaginary, and (by \eqref{e:WC})
orthogonal to $\rho_i$.
Therefore the root subgroups $G_\alpha$ and $G_{\tau(\alpha)}$
commute. Then $\xi\sigma_\alpha\sigma_{\tau(\alpha)}\xi\inv
=
\sigma_{\tau(\alpha)}\sigma_{\alpha}=\sigma_\alpha\sigma_{\tau(\alpha)}$. 

If $\alpha$ is a $\tau$-real root  this reduces easily
to a computation in $SL(2)$. We omit the details.
\end{proof}

Fix $\tau\in\I_W$ and suppose $x,x'\in\X_\tau$. 
As a consequence of Propositions \ref{p:X_i/W} and \ref{p:realweyl} we
have
\begin{equation}
\label{e:GN}
x'\text{ is $G$-conjugate to }x \Leftrightarrow x'=w\times x\text{ for some
}w\in W_{i,\tau}.
\end{equation}

Another useful result obtained from the action of $W^\tau$ is the
computation of strong real forms.

\begin{proposition}
\label{p:Xstrongrealforms}
Every element $x\in\X$ is $G$-conjugate to an element of
$\X_{\delta}$, and 
there is a canonical bijection between $\X_{\delta}/W_{i,\delta}$ and
the set of strong real forms of $(G,\gamma)$.
\end{proposition}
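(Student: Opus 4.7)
The plan is to establish the bijection by first producing a map $\X_\delta \to \I/G$ and showing it is surjective, then identifying its fibers with the cross action orbits of $W_{i,\delta}$. The map is defined by sending $x \in \X_\delta$ to the $G$-orbit of any preimage $\xi \in \tX_\delta$; well-definedness is immediate since two preimages of $x$ differ by $H$-conjugation, hence are $G$-conjugate.

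Surjectivity of this map is the content of the first assertion of the proposition. To prove it, I would pick $\xi \in \tX$ representing a given strong real form and let $\theta = \theta_\xi$. By the standard existence of a fundamental (maximally compact) $\theta$-stable Cartan subgroup $H_f$ of $G$, there exists $g \in G$ with $g H_f g\inv = H$; replacing $\xi$ by $g\xi g\inv$ we may assume $H$ is a fundamental Cartan for the new Cartan involution. Because $H$ is fundamental, this involution has no real roots on $H$, so the corresponding twisted involution $p(x)$ has no real roots. The distinguished element $\delta$ (coming from the diagram involution $\gamma$, which permutes simple roots) likewise has no real roots, and in the inner class of $\gamma$ the set of twisted involutions with no real roots forms a single $W$-orbit under twisted conjugation \eqref{e:twistedconjugation}. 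Hence we may further conjugate $\xi$ by some $n \in N$ (whose action on $\I_W$ is precisely twisted $W$-conjugation) to arrange $p(x) = \delta$, placing the resulting $G$-conjugate of $x$ in $\X_\delta$.

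For the fiber computation, suppose $x, x' \in \X_\delta$ map to the same strong real form. Then their preimages in $\tX_\delta$ are $G$-conjugate, and by \eqref{e:GN} applied with $\tau = \delta$ this forces $x' = w \times x$ for some $w \in W_{i,\delta}$. Conversely, the cross action of $W_{i,\delta}$ is realized by $G$-conjugation through representatives in $N$ (cf.~\eqref{e:cross}), so it always preserves the strong real form. Combining this with the surjectivity above identifies $\X_\delta/W_{i,\delta}$ with $\I/G$. The main obstacle is the structural claim used in the surjectivity step — namely, that fundamental Cartans across all real forms in the inner class of $\gamma$ give rise to the single $W$-orbit of $\delta$ in $\I_W$. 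This is a standard consequence of the theory of twisted involutions in \cite{richardson_springer} together with the characterization of the fundamental Cartan as the unique (up to $K$-conjugation) $\theta$-stable Cartan with no real roots, but writing it out carefully in the current setup of strong involutions requires some attention.
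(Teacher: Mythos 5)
Your argument is correct, but it reaches the fiber over $\delta$ by a genuinely different route from the paper's. The paper (end of Section \ref{s:cayley}) stays inside its combinatorial framework: starting from $x\in\X_\tau$ it applies inverse Cayley transforms $c_\alpha$ through $\tau$-real roots, each of which produces an element $G$-conjugate to $x$ by Lemma \ref{l:cayley2}(2), until it reaches a fiber whose twisted involution has no real roots; it then uses the cross action, Proposition \ref{p:trivially} and \eqref{e:GN} exactly as you do for the fiber computation. You instead import classical structure theory --- the existence of a fundamental $\theta_\xi$-stable Cartan subgroup --- to jump directly to a twisted involution with no real roots. Both routes then rest on the same combinatorial fact, which you correctly isolate as the crux: the elements of $\I_W$ with no real roots form a single twisted conjugacy class, namely that of $\delta$. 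One caution about your proposed justification: uniqueness of the fundamental Cartan up to $K$-conjugacy only compares Cartans within a single real form, so by itself it does not show that two different real forms in the inner class produce twisted-conjugate elements of $\I_W$. What does the work is the descent dichotomy of \cite{richardson_springer}: if $\tau=w\delta$ with $w\ne1$, there is a simple root $\alpha$ with $\tau(\alpha)<0$, and either $\tau(\alpha)=-\alpha$ (impossible when there are no real roots) or twisted conjugation by $s_\alpha$ strictly decreases the length of $w$; since having no real roots is preserved by twisted conjugation, induction on length lands at $\delta$. With that supplied your proof is complete, and it has the virtue of explaining the statement structurally (every real form has a fundamental Cartan), whereas the paper's Cayley-transform argument is algorithmic and in effect reproves the reduction to the fundamental Cartan rather than assuming it.
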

(See the remark after \eqref{e:Xx} for the notion of $G$-conjugacy.)
This corresponds to the fact that every real form in the given inner
class contains a fundamental (i.e. most compact) Cartan subgroup. Note
that if $G$ is adjoint this gives a bijection between
$\X_\delta/W_{i,\delta}$ and real forms in the given inner class.

We defer the proof until the end of Section \ref{s:cayley}.
See Examples \ref{ex:sl2_2} and \ref{ex:psl2_1}.

\subsec{Recapitulation}
\label{s:recap}
We summarize the main results on the translation between the structure
of $\X$ and some standard objects for $G$.

Fix basic data $(G,\gamma)$ as in Section \ref{s:basic} and define 
$\GGamma$ as in Definition \ref{d:extended}.
Let  $\X=\X(G,\gamma)$  (cf.~\eqref{e:X2}) 
and define
$\I_W$ as in \eqref{e:I_W}.
Fix a 
set $\{\xi_i\,|\, i\in I\}$ of representatives of
the strong real forms, i.e. 
$\I/G$, as in \eqref{e:I}, and 
for $i\in I$ let $\theta_i=\theta_{\xi_i}$ 
and $K_i=K_{\xi_i}$. 
Recall  $\delta$ is the distinguished element of $\GGamma$,
or its image in  $\I_W$.
Also $W_{i,\delta}$ is the Weyl group of the
$\delta$-imaginary roots.
Let $\theta_{qs}$ be a quasisplit involution in this inner class
(cf.~Lemma \ref{l:dandq}) and let $K_{qs}=G^{\theta_{qs}}$.

\begin{proposition}
\label{p:recapitulation}
We have bijections:
\hfil
\begin{enumerate}
\item $\X\overset{1-1}\longleftrightarrow\coprod_{i\in I}K_i\bs G/B$
\,(Corollary  \ref{c:I}),
\item
  $\X_{\delta}/W_{i,\delta}\overset{1-1}\longleftrightarrow\{\text{strong
    involutions}\}/G$=\{strong real forms\}\, (Prop. \ref{p:Xstrongrealforms}),
\item 
$\I_W/W\overset{1-1}\longleftrightarrow \{\text{$\theta_{qs}$-stable
  Cartan subgroups in }G\}/K_{qs}$
\,(Prop. \ref{p:X_i/W}),
\item 
$\X/W\overset{1-1}\longleftrightarrow \coprod_i\{\text{$\theta_i$-stable Cartan subgroups in
    $G$}\}/K_i$\,(Prop. \ref{p:X/W}),
\item $\X_{\tau}(z)\simeq [\ch H(\R)/\ch H(\R)^0]^\wedge$ \text{or is
    empty} \,
  $(\tau\in\I_W,z\in Z$,$\ch H(\R)$ as in Remark \ref{r:dualH}),
\item $\Stab_W(x)\simeq W(K_\xi,H)$\, $(\xi\in \tX,\text{ with image
    }x\in \X)$\quad(Prop. \ref{p:realweyl}).
\end{enumerate}
\end{proposition}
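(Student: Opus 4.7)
The plan is essentially to observe that the proposition is a compilation: each of the six assertions has already been proved (or is a direct consequence of results proved) earlier in the paper, and the task is only to assemble the citations and tie up any small loose ends so that nothing is left unexplained.

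First, I would dispose of parts (1), (2), (4), and (6) as straightforward references. Part (1) is exactly the statement of Corollary \ref{c:I}, which rewrites $\X = (\I\cap N^\Gamma)/H$ as a disjoint union of $K_i\bs G/B$ using Proposition \ref{p:kgb} together with \eqref{e:Xxi}. Part (2) is Proposition \ref{p:Xstrongrealforms}, which identifies $W_{i,\delta}$-orbits on the distinguished fiber $\X_\delta$ with strong real forms; this uses the fact that every strong involution is $G$-conjugate into $\tX_\delta$ (fundamental Cartan) combined with \eqref{e:GN}. Part (4) is Proposition \ref{p:X/W}, obtained by combining \eqref{e:txn} with the Matsuki-style observation that any $\theta_i$-stable Cartan subgroup is $K_i$-conjugate to one containing a chosen representative. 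Part (6) is Proposition \ref{p:realweyl}, proved directly there by identifying $\mathrm{Stab}_N(\xi)H/H$ with $\mathrm{Stab}_W(x)$.

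Next, part (3) requires a brief assembly. By Proposition \ref{p:X_i/W}, when $\theta_i$ is quasisplit the map $\X_i/W \to \I_W/W$ is a bijection; composing with part (4) restricted to the quasisplit component $\X_i$ yields the claimed identification of $\I_W/W$ with $K_{qs}$-classes of $\theta_{qs}$-stable Cartan subgroups. (The point is that Proposition \ref{p:X_i/W} promotes the injection $\X_i/W \hookrightarrow \I_W/W$ to a bijection precisely for the quasisplit form, which is why this particular $i$ is singled out.)

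Finally, part (5) is assembled from Proposition \ref{p:fibers}(3) together with Remark \ref{r:H2} and Remark \ref{r:dualH}. Concretely, Proposition \ref{p:fibers}(3) shows that if $\X_\tau(z)$ is nonempty then $H_{-\tau}/A_\tau$ acts simply transitively on it; Remark \ref{r:H2} identifies this quotient with $T_\tau(2)/(A_\tau\cap T_\tau)$; and Remark \ref{r:dualH} rewrites this as $[\ch H(\R)/\ch H(\R)^0]^\wedge$, where the real form of $\ch H$ is the one attached to the Cartan involution $-\tau^\vee$. Since the set $\X_\tau(z)$ is only a torsor for this group (not canonically identified with it), the statement is an isomorphism of the torsor with the group after a choice of base point, which is all that Proposition \ref{p:fibers}(3) provides.

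The only place where anything beyond citation is needed is the surjectivity in part (3), which rests on the surjectivity statement of Proposition \ref{p:X_i/W} whose proof was deferred to the end of Section \ref{s:cayley}; granting that, the proposition reduces to a bookkeeping exercise and I expect no genuine obstacle. The main subtlety to flag for the reader is the base-point dependence in (5): one should write the bijection as an isomorphism of $H_{-\tau}/A_\tau$-sets rather than a canonical equality.
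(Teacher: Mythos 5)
Your proposal is correct and matches the paper's treatment: Proposition \ref{p:recapitulation} is indeed just a compilation of Corollary \ref{c:I}, Propositions \ref{p:Xstrongrealforms}, \ref{p:X_i/W}, \ref{p:X/W}, \ref{p:realweyl}, and Proposition \ref{p:fibers}(3) with Remarks \ref{r:H2} and \ref{r:dualH}, assembled exactly as you describe. Your observations about composing (3) with (4) for the quasisplit form and about the torsor (base-point) subtlety in (5) are accurate and consistent with what the paper provides.
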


\begin{exampleplain}
\label{ex:sl2_2}
We illustrate each part of the Proposition in the case of
$SL(2,\C)$.
In this case $\Out(G)=1$ so
there is only one inner class of involutions, and we drop $\delta$
from the notation.  See examples \ref{ex:sl2C} and \ref{ex:sl2_1}.

Write $H=\{\diag(z,\frac1z)\,|\, z\in \C^\times\}$,
$W=\{1,s\}$, and let $t=\diag(i,-i)$.
Let $n$ be any element of $\Norm_G(H)$ mapping to $s\in W$.
Then $\Norm_G(H)=H\cup Hn$, and
\begin{equation}
\begin{aligned}
\tX&=\{\xi\in H\cup Hn\,|\, \xi^2=\pm \Id\}\\
&=\{\pm \Id,\pm t\}\cup Hn\\
\X&=\{\pm \Id,\pm t\}/H\cup Hn/H\\
\end{aligned}
\end{equation}
Note that $H$ acts trivially on $\{\pm \Id,\pm t\}$, 
and $hnh\inv=h^2n$ for all $h\in H$, so $Hn/H$ is a singleton.
Therefore 
\begin{equation}
\X=\{\pm \Id,\pm t,n\}.
\end{equation}
Strictly speaking these are elements of $\tX$ representing $\X$.

Since $\delta=1$ we have $\X_{\delta}=\{\pm \Id,\pm t\}$, and
$W_{i,\delta}=W$. 
Part (2) of the Proposition says we can take $I$ to be a set of
representatives of $\X_\delta/W=\{\pm \Id,t\}$.
Recall (\ref{ex:sl2_1}) we think of these as ``strong real forms'' 
$SU(2,0),SU(0,2)$ and $SU(1,1)\simeq SL(2,\R)$, respectively.
Then
\begin{equation}
\label{e:XI}
\X[\Id]=\{\Id\},\quad \X[-\Id]=\{-\Id\},\quad \X[t]=\{t,-t,n\}.
\end{equation}
Now $G/B$ is isomorphic to the complex projective plane
$\C\cup\{\infty\}$.
We have $K_{\pm \Id}=G$ and $K_{\pm \Id}\bs G/B$ is a point.
We label these orbits $\O_{2,0}$ and $\O_{0,2}$, respectively. 
On the other hand $K_t\simeq\C^\times$, which acts on $G/B$ by
$z:u\mapsto z^2u$. Therefore there are three orbits of this action:
$\O_0=\{0\}, \O_\infty=\{\infty\}$ and $\O_*=\C^\times$.

So the bijection of (1) is

\bigskip
\begin{equation}
\label{e:orbitssl2}
{\renewcommand{\arraystretch}{1.5}
\begin{tabular}{|c|c|c|c|c|c|}\hline
$x\in\X$& \Id& -\Id & t & -t & n\\\hline
$K_x$& G&G&$\C^\times$&$\C^\times$&$\C^\times$\\\hline
Orbit& $\O_{2,0}$&$\O_{0,2}$&$\O_{0}$&$\O_{\infty}$&$\O_*$\\\hline
\end{tabular}
}
\end{equation}
\bigskip

In this case $\I_W=W=\{1,s\}$. The quasisplit group is $SL(2,\R)$,
which has two conjugacy classes of Cartan subgroups. The compact
Cartan subgroup $T\simeq S^1$ corresponds to $1\in W$, and the split
Cartan subgroup $A\simeq\R^\times$
corresponds to $s\in W$. This is (3) in this case.

Now $\X/W$ has four elements $I,-I,t$ and $n$, corresponding to the
compact Cartan subgroups of $SU(2,0), SU(0,2), SU(1,1)$, and the split
Cartan subgroup of $SU(1,1)$, respectively. This is the content of
(4).

For (5) we have $\X_1(\Id)=\{\pm \Id\}$ and $\X_1(-\Id)=\{\pm t\}$. In this
case $H(\R)\simeq S^1$, and $\ch H(\R)\simeq \R^\times$, so $\ch
H(\R)/\ch H(\R)^0\simeq\Ztwo$.
On the other hand $\X_s(\Id)=\emptyset$ and $\X_s(-\Id)=\{n\}$. In this
case $H(\R)=\R^\times$ and $\ch H(\R)=S^1$ is connected.

Finally consider (6). We have $\Stab_W(\pm \Id)=W$, and $\Stab_W(\pm
t)=1$. This corresponds to the fact that $W(SU(2),S^1))=W$, 
and $W(SL(2,\R),S^1)=1$. On the other hand $\Stab_W(n)=W$,
i.e. $W(SL(2,\R),\R^\times)=W$. 

Most of the conclusions of the  Proposition 
seen in Figure 1.
Projection $p:\X\mapsto \I_W$ is written vertically.

\begin{figure}[ht]
\begin{equation*}
\xymatrix @=0pt @*=<1.5cm,1cm>{
\save []*++!CR{SU(2,0)} \restore \ar@{-}[]!RU+0;[5,0]!RD+0 
\ar@{-}[]!LD+0;[0,2]!RD+0
&\Id&\ar@{-}[]!RU+0;[5,0]!RD+0 
&
\save [].[1,0]!C*--\frm{\}},!R(1.1)*+++!L!/d .5pt/{\longrightarrow z=\Id}\restore \\
\ar@{-}[]!LD+0;[0,3]!RD+0
\save []*++!CR{SU(0,2)} \restore &-\Id&&\\
\save [].[1,0]!C*---\frm{\{}*++++++++++!CR{SU(1,1)} \restore &t&&
\save [].[1,0]!C*--\frm{\}},!R(1.1)*+++!L!/d .5pt/{\longrightarrow z=-\Id}\restore \\
\ar@{=}[]!LD+0;[0,3]!RD+0
&-t&w&&&&\\
\I_W&1&s_\alpha&&&\\
\text{Cartan}&T&A&&&\\
}
\end{equation*}
\caption{$\X$ for $SL(2)$}
\end{figure}
\end{exampleplain}

\newpage

\begin{exampleplain}
\label{ex:psl2_1}
We reconsider the previous example with $PSL(2,\C)$ in place of
$SL(2,\C)$. Again $\gamma=1$  and we drop it from the notation.

Recall $PSL(2,\C)\simeq SO(3,\C)$, and it is easier to work with the latter
realization, with respect to the form 
$\begin{pmatrix}
0&1&0\\
1&0&0\\
0&0&1\\  
\end{pmatrix}$.
We  take $H=\{\diag(z,\frac1z,1)\,|\,z\in\C^\times\}$, write
$W=\{1,s\}$  and let $n$ be a representative in $\Norm_G(H)$ of $s$.
Let $t=\diag(-1,-1,1)$.

As 
in the previous example we have:
\begin{equation}
\begin{aligned}
\X&=\{\xi\in H\cup Hn\,|\, \xi^2=\Id\}/H\\
&=\{\Id,t\}/H\cup (Hn)/H\\
&=\{\Id,t,n\}.
\end{aligned}
\end{equation}
In this case  we can take $I=\{\Id,t\}$, and
\begin{equation}
\X[\Id]=\{\Id\}, \X[t]=\{t,n\}.
\end{equation}
Then $K_{\Id}=G$ and $K_{\Id}\bs G/B$ is a point. On the other hand $K_t\simeq
O(2,\C)$, which has two orbits on the projective plane: $\{0,\infty\}$
and $\C^\times$. This is  (1) of the Proposition in this case.

In this  case (2) says that 
$\X_1/W=\{\Id,t\}$, corresponding to the two real forms of
$G$.

The analogue of 
\eqref{e:orbitssl2} is
$$
\label{e:orbitspgl2}
{\renewcommand{\arraystretch}{1.5}
\begin{tabular}{|c|c|c|c|}\hline
$x\in\X$& \Id& t & n\\\hline
$K_x$& $G$&$\C^\times$&$\C^\times$\\\hline
Orbit& $\O_{2,0}$&$\O_{0}$&$\O_*$\\\hline
\end{tabular}
}
$$
Statement (3) is the same as for $SL(2,\C)$: $\I_W/W$ has two
elements, corresponding to the two conjugacy classes of  Cartan
subgroups of $SO(2,1)$.

For (4), $\X/W=\X=\{\Id,t,n\}$; corresponding to the compact Cartan
subgroups of $SO(3), SO(2,1)$, and the split Cartan subgroup of
$SO(2,1)$, respectively.

Next, $\X_1(\Id)=\{\Id,t\}$ and $\X_s(I)=\{n\}$, corresponding to $\ch
H(\R)=\R^\times$ and $S^1$ as in the previous example. This gives (5).

Finally note that $\Stab_W(1)=\Stab(n)=W$ as in the previous example.
However
$\Stab_W(t)=W$; i.e. $W(SO(2,1),S^1)=W$.
Comparing this case with the fact that $\Stab_W(t)=1$ in the case of $SL(2,\C)$
 illustrates
how $\Stab_W(\xi)$ depends  in  a subtle way on   isogenies.
See \eqref{e:wmkh}.
\end{exampleplain}

\begin{exampleplain}
\label{ex:sl2psl2reps}
Having described $\X$ for $SL(2,\C)$ and $PSL(2,\C)$ we can now
describe the representation theory of real forms of these groups in
terms of the space $\caZ$. See Theorem \ref{t:main}, and Examples
\ref{ex:sl2_2} and \ref{ex:psl2_1}.
Note that the representations are parametrized by pairs of orbits as
in \eqref{e:Zorbit}.

Write $\C$ for the trivial representation.

For $SU(1,1)\simeq SL(2,\R)$, at infinitesimal character $\rho$, write  $DS_\pm$ for the discrete
series representations and 
$PS_{odd}$ for the irreducible (non-spherical) principal series
representation. 

Consider $SO(2,1)$.
Let $sgn$ be the sign representation of $SO(2,1)$, and $DS$ be the
unique discrete series representation with infinitesimal character
$\rho$.
At infinitesimal character $2\rho$ $SO(2,1)$ has two irreducible 
principal series 
representations  denoted
$PS_{\pm}$. 

\newpage

\centerline{\bf\large Table of representations of $SL(2)$ and
  $PGL(2)$}
\medskip
\centerline{
{\renewcommand{\arraystretch}{2}
\begin{tabular}{| l| l| l| l| l| l| l|| l| l| l| l| l|l|l|}
\hline
Orbit & x & $x^2$  & $\theta_x$ & $G_x(\R)$ & $\lambda$ & rep&
Orbit & y &  $y^2$ & $\theta_y$ & $\ch G_y(\R)$ & $\lambda$ & rep\\
\hline
$\O_{2,0}$ & \Id & \Id &1 & $SU(2,0)$ & $\rho$ & $\C$ &
$\O'_{*}$ & n & \Id &-1 & $SO(2,1)$ & $2\rho$ &$PS_{+}$\\ 
\hline
$\O_{0,2}$ & -\Id & \Id&1 & $SU(0,2)$ & $\rho$ & $\C$ &
$\O'_{*}$ & n & \Id&-1 & $SO(2,1)$ & $2\rho$ &$PS_{-}$\\
\hline
$\O_{0}$ & t & -\Id&1 & $SU(1,1)$ & $\rho$ & DS$_+$ &
$\O'_{*}$ & n & \Id&-1 & $SO(2,1)$ & $\rho$ & $\C$ \\ 
\hline
$\O_{\infty}$ & -t & -\Id&1 & $SU(1,1)$ & $\rho$ & DS$_-$ &
$\O'_{*}$ & n & \Id&-1 & $SO(2,1)$ & $\rho$ & sgn \\ 
\hline
$\O_{*}$ & n & -\Id&1 & $SU(1,1)$ & $\rho$ & $\C$ &
$\O'_{+}$ & t & \Id&-1 & $SO(2,1)$ & $\rho$ & DS \\ 
\hline
$\O_{*}$ & n & \Id&1 & $SU(1,1)$ & $\rho$ & $PS_{odd}$ &
$\O'_{3,0}$ & \Id & \Id&1 & $SO(3)$ & $\rho$ & $\C$ \\
\hline
\end{tabular}
}
}
\bigskip\bigskip
See \cite{adams_snowbird} for more detail on this example.
\end{exampleplain}

\sec{The reduced parameter space}
\label{s:reduced}
If $G$ is adjoint we saw in Section \ref{s:Z} that the parameter space
$\X=\X(G,\gamma)$ is perfectly suited to parametrizing representations
of real forms of $G$.  If $G$ is not adjoint
then strong involutions play an essential role, and the difference
between involutions and strong involutions is unavoidable.
Nevertheless in some respects the space $\X$ is larger than necessary,
and a satisfactory theory is obtained with a smaller set, the {\it
  reduced one-sided parameter space}.  While $\X$ may be infinite,
this is always a finite set.

The most economical possibility would be to keep a single orbit of
strong involutions over each orbit of real forms.  
However there is no canonical
way to make this choice.  The reduced parameter space 
provides a more canonical way to reduce to a small finite number of
choices related to the center.

Let $Z=Z(G)$, and recall (cf. \ref{p:fibers}(3)) $Z^\Gamma$ is the
$\Gamma$-invariants in $Z$.
There is a natural action of $Z$ on $\X$ by left multiplication.
This preserves the fibers $\X_\tau$, and commutes with the conjugation
action of $G$.
For $x\in \X$ recall \eqref{e:Xx} $\X[x]$ is the set of elements of $\X$ conjugate to
$x$. 
If  $z\in Z$ multiplication by $z$ is a
bijection:
\begin{equation}
\X[x]\overset{1-1}\longleftrightarrow \X[zx]
\end{equation}
(cf.~\eqref{e:Xx}).
In other words the  orbit pictures for $x$ and $zx$ are
identical. 
Suppose $\xi\in \tX$ lies over $x$.
Harish-Chandra modules for  $\xi$ are $(\g,K_\xi)$
modules; 
since $K_\xi=K_{z\xi}$, Harish-Chandra modules for $\xi$ 
are exactly the same as Harish-Chandra modules for $z\xi$, with the
same notion of equivalence.

For example suppose $G=SL(2)$ and take  $\xi=I$ and $z=-I$. Then $\xi=I$ and
$z\xi=-I$ both correspond to the compact group $SU(2)$. See 
Example \ref{ex:sl2_1}.

Write $\theta$ for the action
of the non-trivial element of $\Gamma$ on $Z$.
For $z\in Z$ recall $\X(z)$ is the set of elements of $\X$ satisfying
$x^2=z$ \eqref{e:Xz}, and $\X(z)$ is empty unless $z\in Z^\Gamma$. 
If $x\in \X(z')$ and $z\in Z$ then
\begin{equation}
zx\in \X(z'z\theta(z)).
\end{equation}
It is easy to see that
\begin{equation}
Z^\Gamma/\{z\theta(z)\,|\,z\in Z\}\simeq H^2(\Gamma,Z)
\end{equation}
is a finite set. This comes down to the fact that
if $Z$ is a torus then $Z^\Gamma/\{z\theta(z)\}\simeq(\Ztwo)^n$ where
$n$ is the number of $\R^\times$ factors in the corresponding real
torus (cf.~Remark \ref{r:H2}).

\begin{definition}
\label{d:reduced}
Choose a set of representatives $Z_0\subset Z^\Gamma$ for
$Z^\Gamma/\{z\theta(z)\}\simeq H^2(\Gamma,Z)$.
The {\it reduced parameter space} is 
\begin{equation}
\label{e:reduced}
\X^r(G,\gamma)=\coprod_{z\in Z_0}\X(z).
\end{equation}
\end{definition}

\begin{exampleplain}
Let $G=SL(n,\C)$, and let $\gamma=1$. Suppose $p+q=n$ and 
$\alpha^n=(-1)^q$. Let
$$
\xi_\alpha=\diag(\overbrace{\alpha,\dots,\alpha}^p,\overbrace{-\alpha,\dots,-\alpha}^q)
$$
These are representatives of the equivalence classes of strong
involutions in this inner class.  For fixed $p\ne q$ there are $n$
strong real forms, all mapping to the  real
form $SU(p,q)$.  In other words we obtain $n$ identical orbit
pictures. If $p=q$ a similar statement holds, except that
$\xi_\alpha$ is conjugate to $\xi_{-\alpha}$.

If  $n$ is odd then $Z^\Gamma/\{z\theta(z)\}=Z/Z^2$ is trivial,
so we take $Z_0=\{I\}$, and the equivalence classes of strong
involutions in $\X_0(G,\gamma)$ are represented by
\begin{equation}
(\overbrace{1,\dots,1}^p,\overbrace{-1,\dots,-1}^q)
\quad (q\text{ even}).
\end{equation}
For each $p,q$ there is a unique strong involution mapping to the real
form $SU(p,q)$, instead of $n$ as we had earlier.

If $n$ is even then $Z^\Gamma/\{z\theta(z)\}$ has order $2$.
We can take
$Z_0=\{I,\zeta I\}$ where $\zeta$ is a primitive $n^{th}$ root of
$1$. Let $\tau$ be a primitive $2n^{th}$ root of $1$. 
Then the strong real forms in $\X_0(G,\gamma)$
are 
\begin{equation}
\begin{aligned}
\pm\diag(\overbrace{1,\dots,1}^p,\overbrace{-1,\dots,-1}^q)&\quad (q\text{ even},p\ne q)\\
\diag(\overbrace{1,\dots,1}^p,\overbrace{-1,\dots,-1}^p)&\quad\\
\pm\diag(\overbrace{\tau,\dots,\tau}^p,\overbrace{-\tau,\dots,-\tau}^q)&\quad (q\text{ odd}).
\end{aligned}
\end{equation}

In this case there are two strong real forms mapping to the real form
$SU(p,q)$ if $p\ne q$, and $1$ if $p=q$.
\end{exampleplain}
\medskip

The calculations needed to understand representation theory 
(see Section \ref{s:Z}) take place entirely in a fixed set
$\X(z)$. The sets $\X(z')$ and $\X(z'z\theta(z))$ are canonically
identified, so it is safe to think of $\X(z)$ as being defined for
$z\in Z_0$. The {\tt atlas} software takes this approach.

\sec{Cayley Transforms and the Cross Action}
\label{s:cayley}
We continue to work with the one-sided
parameter space $\X=\X(G,\gamma)$. We begin with some formal constructions. 

Fix $x\in\X$ and let $\tau=p(x)\in \I_W$.
Recall (Section \ref{s:waction}) $\tau$ defines the real,
imaginary and complex roots, and $x$ defines a grading
$\gr_{x}$
of the imaginary roots.
Suppose $\alpha$ is an imaginary noncompact root,
i.e. $\tau(\alpha)=\alpha$ and $\gr_x(\alpha)=1$.

Let $G_\alpha$ be the derived group of $\Cent_G(\ker(\alpha))$, and
$H_\alpha\subset G_\alpha$ the one-parameter subgroup corresponding to
$\alpha$. Then $G_\alpha$ is isomorphic to $SL(2)$ or $PSL(2)$ and
$H_\alpha$ is a Cartan subgroup of $G_\alpha$. 
Choose $\sigma_\alpha\in \Norm_{G_\alpha}(H_\alpha)\bs H_\alpha$; then
$\sigma_\alpha(\alpha)=-\alpha$. Let $m_\alpha=\sigma_\alpha^2=\ch\alpha(-1)$.

\begin{definition}
\label{d:cayley}
Suppose $x\in\X$ and $\alpha$ is a noncompact imaginary root.
Choose a pre-image
$\xi$ of $x$ in $\tX$, and
define $c^\alpha(x)$ to be the image of $\sigma_\alpha\xi$ in $\X$.
\end{definition}

\begin{lemma}
\label{l:cayley1}
\hfil
\begin{enumerate}
\item $c^\alpha(x)$ is well defined, independent of the choices of
  $\sigma_\alpha$ and $\xi$.
\item $c^\alpha(x)$ is $G$-conjugate to $x$, and $c^\alpha(x)^2=x^2$.
\item $p(c^\alpha(x))=s_\alpha p(x)\in \I_W$.
\end{enumerate}
\end{lemma}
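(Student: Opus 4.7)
Part (3) is immediate: the projection $N^\Gamma \twoheadrightarrow W^\Gamma$ sends $\sigma_\alpha$ to $s_\alpha$ and $\xi$ to $\tau = p(x)$, so $\sigma_\alpha\xi$ has image $s_\alpha\tau$. For part (2), the workhorse is the identity $\theta_\xi(\sigma_\alpha) = m_\alpha\sigma_\alpha$ for noncompact imaginary $\alpha$. Indeed, $\theta_\xi$ preserves $G_\alpha$ (since $\tau\alpha=\alpha$), is trivial on $H_\alpha$ (imaginary), and acts as $-1$ on $X_\alpha$ (noncompact), which forces $\theta_\xi|_{G_\alpha} = \Int(\ch\alpha(i))$; a short computation using $\sigma_\alpha \ch\alpha(z) = \ch\alpha(z\inv)\sigma_\alpha$ then yields $\ch\alpha(i)\sigma_\alpha\ch\alpha(i)\inv = m_\alpha \sigma_\alpha$. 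From this, $(\sigma_\alpha\xi)^2 = \sigma_\alpha\theta_\xi(\sigma_\alpha)\xi^2 = m_\alpha\sigma_\alpha^2\xi^2 = m_\alpha^2\xi^2 = \xi^2$, which simultaneously verifies $\sigma_\alpha\xi \in \tX$ and $c^\alpha(x)^2 = x^2$. For the $G$-conjugacy assertion, I must find $g \in G$ with $g\theta_\xi(g\inv) = \sigma_\alpha$; such $g$ is produced by a rank-one calculation in $G_\alpha$ (concretely, the explicit $SL(2)$-Cayley transform between the compact and split Cartans of $SL(2,\R)$), and noncompactness is essential for the element $\sigma_\alpha$ to lie in the image of $g\mapsto g\theta_\xi(g\inv)$.

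Part (1) carries the real content. By Proposition \ref{p:fibers}, two elements of $\tX_{s_\alpha\tau}$ give the same class in $\X$ exactly when they differ by left multiplication by an element of $A_{s_\alpha\tau} = (1 - s_\alpha\tau)H$. So I must show that altering either $\sigma_\alpha$ or the lift $\xi$ multiplies $\sigma_\alpha\xi$ on the left by an element of $A_{s_\alpha\tau}$.

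Independence of $\sigma_\alpha$: any other choice is $h\sigma_\alpha$ with $h \in H_\alpha$, producing $h\cdot\sigma_\alpha\xi$. Since $\alpha$ is imaginary, $\tau$ is trivial on $H_\alpha$, and $s_\alpha$ acts on $H_\alpha$ by inversion, so $(1-s_\alpha\tau)|_{H_\alpha}$ is the squaring map, surjective on $H_\alpha \cong \C^\times$. Hence $h \in (1-s_\alpha\tau)(H_\alpha) \subset A_{s_\alpha\tau}$.

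Independence of $\xi$: any other preimage of $x$ has the form $\xi' = k\xi k\inv = a\xi$ with $a = k\tau(k\inv) \in A_\tau$, and $\sigma_\alpha\xi' = s_\alpha(a)\sigma_\alpha\xi$; I need $s_\alpha(a) \in A_{s_\alpha\tau}$. The commutation $\tau s_\alpha = s_\alpha\tau$ (from $\tau\alpha = \alpha$) makes both $H^{s_\alpha}$ and $H_\alpha$ stable under $\tau$, and $H = H^{s_\alpha}\cdot H_\alpha$. Writing $k = k_0 k_1$ with $k_0 \in H^{s_\alpha}$ and $k_1 \in H_\alpha$, the imaginary hypothesis $\tau|_{H_\alpha} = 1$ makes the $k_1$-factor drop out of $a = k_0 k_1 \tau(k_0)\inv \tau(k_1)\inv$, leaving $a = k_0 \tau(k_0)\inv \in H^{s_\alpha}$. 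Then $s_\alpha(a) = a$, and since $s_\alpha\tau$ restricts to $\tau$ on $H^{s_\alpha}$, we conclude $a = (1-\tau)(k_0) = (1-s_\alpha\tau)(k_0) \in A_{s_\alpha\tau}$. The main obstacle is precisely this last reduction---showing that the apparent ambiguity $a$ lies in the $s_\alpha$-fixed subgroup of $H$, where the involutions $\tau$ and $s_\alpha\tau$ agree, so that the potentially troublesome $H_\alpha$-component of $k$ is killed by the imaginary hypothesis.
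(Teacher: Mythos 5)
Your proof is correct and follows essentially the same route as the paper's: the same rank-one identities (namely $\theta_\xi|_{G_\alpha}=\Int(\ch\alpha(i))$, hence $\xi\sigma_\alpha\xi^{-1}=m_\alpha\sigma_\alpha=\sigma_\alpha^{-1}$, plus the $SL(2)$ conjugacy producing $g$ with $g\theta_\xi(g^{-1})=\sigma_\alpha$) drive parts (2) and (3). For part (1) the paper exhibits the ambiguity directly as conjugation of $\sigma_\alpha\xi$ by elements of $H$ (splitting $H=H_\alpha\ker\alpha$), whereas you translate it into left multiplication by $A_{s_\alpha\tau}$ via Proposition \ref{p:fibers}; these are equivalent bookkeeping devices.
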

(Here and in Lemma \ref{l:cayley2} keep in mind  the remark after
\eqref{e:Xx} regarding the notion of $G$-conjugacy.) 

\begin{proof}
Choose $\xi$, and let $t=\ch\alpha(i)\in H_\alpha$. 
Suppose $h\in H_\alpha$. We have a few elementary identities,
essentially in $SL(2)$:

\begin{equation}
\label{e:sl2ids}
\begin{aligned}
\sigma_\alpha h\sigma_\alpha\inv&=h\inv,\quad
h\sigma_\alpha h\inv =h^2\sigma_\alpha\\
\xi h\xi\inv&=h\\
tgt\inv&=\xi g\xi\inv \quad(g\in G_\alpha)\\
\xi\sigma_\alpha \xi\inv&=\sigma_\alpha\inv.
\end{aligned}
\end{equation}
The first two lines follow from 
$\sigma_\alpha(\ch\alpha)=-\ch\alpha$
and $\theta_{\xi}(\ch\alpha)=\ch\alpha$.
For the third, $\int(t)$ and $\theta_\xi$ 
agree on $G_\alpha$, since they agree on
$H_\alpha$ and the $\pm\alpha$ root spaces. The final assertion follows
from the third and a calculation in $SL(2)$.

Now $\sigma_\alpha\xi$ clearly normalizes $H$, and
$$
(\sigma_\alpha \xi)^2=\sigma_\alpha (\xi\sigma_\alpha\xi\inv)\xi^2=\xi^2\in Z(G),
$$
so $\sigma_\alpha\xi\in \tX$.

Given a choice of $\sigma_\alpha$ any other choice is of the form
$h^2\sigma_\alpha$ for some
$h\in H_\alpha$, and 
\begin{equation}
(h^2\sigma_\alpha)\xi=(h\sigma_\alpha h\inv)\xi=h\sigma_\alpha(h\inv
\xi h)h\inv=h(\sigma_\alpha \xi)h\inv.
\end{equation}
Therefore the image of $\sigma_\alpha\xi$ in $\X$ is independent
of the choice of $\sigma_\alpha$.

We need to show that 
$\sigma_\alpha\xi$ and
$\sigma_\alpha h\xi h\inv$ have the same image in $\X$ for all $h\in H$.
Write $H=H_\alpha(\ker(\alpha))$. If $h\in H_\alpha$ then 
$h\xi h\inv =\xi$ so the assertion is  obvious. If $h\in\ker(\alpha)$ then
$\sigma_\alpha h=h\sigma_\alpha$, and 
$\sigma_\alpha(h\xi h\inv)=h(\sigma_\alpha \xi)h\inv$.

For the second assertion, we  actually show $c^\alpha(x)$ is conjugate
to $x$ by an element of $G_\alpha$.
By a calculation in $SL(2)$ it is easy to see $g(\sigma_\alpha t)g\inv =t$ for some $g\in G_\alpha$.
Therefore
\begin{equation}
g(\sigma_\alpha \xi)g\inv=
g(\sigma_\alpha tt\inv\xi)g\inv
=
g(\sigma_\alpha t)g\inv g(t\inv \xi)g\inv =
tt\inv \xi=\xi.
\end{equation}
The fact that $c^\alpha(x)^2=x^2$ follows immediately, and the final assertion is obvious.
\end{proof}

We now define inverse Cayley transforms.
Suppose $\xi\in\tX$, and let $\tau=\tilde p(\xi)$.
Suppose $\alpha$ is a $\tau$-real root.
Define $G_\alpha$ and $H_\alpha$ as before. Let $m_\alpha=\ch\alpha(-1)$.

\begin{lemma}
\label{l:sigmaalpha}
There exists $\sigma_\alpha\in \Norm_{G_\alpha}(H_\alpha)\bs H_\alpha$ so that
$\sigma_\alpha\xi=g\xi g\inv$ for some $g\in G_\alpha$.
The only other element satisfying these conditions is
$m_\alpha\sigma_\alpha$. 
\end{lemma}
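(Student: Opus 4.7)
The plan is to reduce the assertion to an explicit computation inside the rank-one subgroup $G_\alpha$, which is isomorphic to $SL(2)$ or $PSL(2)$, and then to carry out that computation in a standard matrix model.

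First, I would rewrite the defining equation. The condition $\sigma_\alpha\xi = g\xi g^{-1}$ is equivalent to
\[
\sigma_\alpha = g\,\theta_\xi(g)^{-1},
\]
so we are looking for elements of $\Norm_{G_\alpha}(H_\alpha)\setminus H_\alpha$ that lie in the image of the map $\phi\colon G_\alpha\to G_\alpha$ defined by $\phi(g) = g\,\theta_\xi(g)^{-1}$. Since $\theta_\xi$ preserves $G_\alpha$ (it preserves $\ker\alpha$ and the $\pm\alpha$ root spaces, as in \eqref{e:sl2ids}), and since $\alpha$ is $\tau$-real so that $\theta_\xi$ acts as inversion on $H_\alpha$, the restriction $\theta_\xi|_{G_\alpha}$ is conjugate within $G_\alpha$ to the standard split Cartan involution.

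Second, I would work in the $SL(2)$ model, identifying $H_\alpha$ with the diagonal torus and $\theta_\xi|_{G_\alpha}$ with $g\mapsto {}^tg^{-1}$. A general element of $\Norm(H_\alpha)\setminus H_\alpha$ has the form $w(x) = \bigl(\begin{smallmatrix} 0 & x \\ -x^{-1} & 0\end{smallmatrix}\bigr)$, and the image of $\phi$ consists of the symmetric matrices $g\cdot{}^tg$. Thus we need $w(x)$ symmetric, i.e. $x = -x^{-1}$, which forces $x^2 = -1$ and yields exactly the two candidates $w(i)$ and $w(-i)$. A direct computation shows these differ by $-I = \ch\alpha(-1) = m_\alpha$. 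Both candidates actually occur in the image of $\phi$: for instance, $w(i) = g\,{}^tg$ for $g = \tfrac{1}{\sqrt{2}}\bigl(\begin{smallmatrix} 1 & i \\ i & 1\end{smallmatrix}\bigr)\in SL(2)$, and then $w(-i) = \phi(m_\alpha^{1/2}\cdot g)$, which lives in $G_\alpha$ whenever $m_\alpha\in G_\alpha$.

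Third, I would address the two isogeny cases: for $G_\alpha\simeq SL(2)$ we get exactly two solutions $\sigma_\alpha,\, m_\alpha\sigma_\alpha$, and for $G_\alpha\simeq PSL(2)$ the element $m_\alpha$ is trivial so the two solutions collapse to one, still consistent with the lemma. Finally, one must check that the conclusion is independent of the auxiliary identification $G_\alpha\simeq SL(2)$ used above: any two such identifications differ by conjugation by $H_\alpha$, which sends $w(x)$ to $w(\lambda^2 x)$ for some $\lambda\in\C^\times$ and preserves the property of being symmetric up to an overall unit, so the unordered pair $\{w(i),w(-i)\}$ (equivalently, the coset $\sigma_\alpha H_\alpha(2)$) is intrinsically defined.

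The main obstacle is organizational rather than mathematical: making precise the identification of $\theta_\xi|_{G_\alpha}$ with the split model, and keeping straight the two isogeny types of $G_\alpha$ together with the resulting behavior of $m_\alpha$. The surjectivity assertion — that every symmetric element in $\Norm(H_\alpha)\setminus H_\alpha$ actually lies in the image of $\phi$ — is handled by the explicit factorization above, and requires nothing beyond a computation in $SL(2)$.
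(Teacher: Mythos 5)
Your proposal is correct and follows essentially the same route as the paper: both arguments rest on the observation that $\theta_\xi$ restricts to an inner involution of $G_\alpha$ inverting $H_\alpha$, and then reduce the existence and the two-fold ambiguity of $\sigma_\alpha$ to an explicit computation in $SL(2)$ (your condition that $\sigma_\alpha=g\,\theta_\xi(g)^{-1}$ be symmetric is literally the paper's condition that $\sigma_\alpha y$ be $G_\alpha$-conjugate to $y$, where $\int(y)=\theta_\xi|_{G_\alpha}$). The only blemish is the explicit factorization of $w(-i)$: multiplying $g=\tfrac1{\sqrt2}\bigl(\begin{smallmatrix}1&i\\i&1\end{smallmatrix}\bigr)$ by $\ch\alpha(i)$ returns $w(i)$ again rather than $w(-i)$, so take instead $\tfrac1{\sqrt2}\bigl(\begin{smallmatrix}1&-i\\-i&1\end{smallmatrix}\bigr)$, or simply invoke the transitivity of $SL(2,\C)$ on unimodular symmetric bilinear forms.
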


\begin{proof}
This is similar to the previous case.
The involution $\theta_\xi$ restricted to $G_\alpha$ is inner for
$G_\alpha$, and acts by $h\mapsto h\inv$ for $h\in H_\alpha$. 
Therefore we may choose $y\in \Norm_{G_\alpha}(H_\alpha)\bs H_\alpha$ so that
$ygy\inv=\xi g\xi\inv$ for all $g\in G_\alpha$.
By a calculation in $SL(2)$ we may choose $\sigma_\alpha$ so that
$g(\sigma_\alpha y)g\inv =y$ for some $g\in G_\alpha$. 
Then
\begin{equation}
g(\sigma_\alpha\xi)g\inv 
=
g(\sigma_\alpha yy\inv \xi)g\inv
=
g(\sigma_\alpha y)g\inv g(y\inv \xi)g\inv 
=
yy\inv \xi=\xi.
\end{equation}
We have $\sigma_\alpha y\in H_\alpha$, and
$\alpha(\sigma_\alpha y)=-1$. Therefore any two such choices
differ by $m_\alpha$.
\end{proof}

\begin{definition}
\label{d:cayley2}
Suppose $\xi\in\tX$ and $\alpha$ is a real root with respect to
$\theta_{\xi}$. Let $c_\alpha(\xi)=
\{\sigma_\alpha\xi,m_\alpha\sigma_\alpha\xi\}$.

If $x\in \X$ choose $\xi\in \tX$ mapping to $x$, and define
$c_\alpha(x)$ to be the image of $c_\alpha(\xi)$ in $\X$.
This is a set with one or two elements.
\end{definition}

The analogue of Lemma \ref{l:cayley1} is immediate:

\begin{lemma}
\label{l:cayley2}
Suppose $x\in \X$ and $\alpha$ is a real root with respect to
$\theta_x$.
\hfil
\begin{enumerate}
\item $c_\alpha(x)$ is well defined, independent of the choice of
  $\xi$.
\item If $y\in c_\alpha(x)$ then 
$y$ is $G$-conjugate to $x$, and $y^2=x^2$.
\item $p(c_\alpha(x))=s_\alpha p(x)\in \I_W$.
\end{enumerate}
\end{lemma}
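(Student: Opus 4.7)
The plan is to imitate the proof of Lemma \ref{l:cayley1} step by step, with Lemma \ref{l:sigmaalpha} serving as the key structural input (just as the $SL(2)$-identities \eqref{e:sl2ids} did in the noncompact imaginary case). The only genuinely new feature is that, because $\theta_\xi|_{G_\alpha}$ is inner rather than outer on $G_\alpha$, there are two admissible choices of $\sigma_\alpha$ differing by $m_\alpha$; this is exactly why $c_\alpha(x)$ is naturally a set of (possibly) two elements rather than a single one.

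For (1) I would argue in two stages. First, changing the ambiguity in $\sigma_\alpha$: since Lemma \ref{l:sigmaalpha} says the alternative choice is $m_\alpha\sigma_\alpha$, the two candidate elements $\sigma_\alpha\xi$ and $m_\alpha\sigma_\alpha\xi$ are merely swapped when one switches between the two choices, so the unordered set $c_\alpha(\xi)$ is unaffected. Second, independence of the lift $\xi$: if we replace $\xi$ by $h\xi h\inv$ for $h\in H$, then $H$ normalizes $G_\alpha$ (since $H\subset \Cent_G(\ker\alpha)$), and I would check that $h\sigma_\alpha h\inv$ is a legitimate $\sigma_\alpha$ for the conjugated $\xi$ by transporting the witness $g$ from Lemma \ref{l:sigmaalpha} to $hgh\inv\in G_\alpha$. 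The resulting transform differs from the original by $H$-conjugation, hence projects to the same class in $\X$.

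Statement (2) is essentially built into Lemma \ref{l:sigmaalpha}: the defining identity $\sigma_\alpha\xi=g\xi g\inv$ directly gives that $\sigma_\alpha\xi$ is $G$-conjugate (in fact $G_\alpha$-conjugate) to $\xi$, and applying the same lemma to the alternative $\sigma_\alpha$ handles the second element of $c_\alpha(x)$. Squaring then yields $(\sigma_\alpha\xi)^2 = g\xi^2 g\inv = \xi^2$ because $\xi^2\in Z(G)$, and the identical computation covers $m_\alpha\sigma_\alpha\xi$. For (3), both $\sigma_\alpha\xi$ and $m_\alpha\sigma_\alpha\xi$ map to $s_\alpha\tau$ in $W^\Gamma$ (where $\tau=p(x)$), since $m_\alpha\in H$ is killed in the quotient. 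A one-line computation using $\tau(\alpha)=-\alpha$ gives $\tau s_\alpha\tau\inv = s_{-\alpha}=s_\alpha$, hence $(s_\alpha\tau)^2=s_\alpha^2\tau^2=1$, so $s_\alpha\tau\in\I_W$.

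The main obstacle is the bookkeeping in (1): one must keep careful track of what depends on the (non-canonical) lift $\xi$ and the (two-fold ambiguous) choice of $\sigma_\alpha$, and verify these ambiguities wash out correctly in the quotient $\X$. Everything else is a direct translation of the noncompact imaginary Cayley argument, aided by the fact that \eqref{e:sl2ids}-style $SL(2)$-identities continue to hold inside $G_\alpha$.
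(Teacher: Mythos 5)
Your proof is correct and follows exactly the route the paper intends: the paper declares this lemma ``immediate'' as the analogue of Lemma \ref{l:cayley1}, with Lemma \ref{l:sigmaalpha} supplying the conjugacy statement $\sigma_\alpha\xi=g\xi g\inv$ and the two-fold ambiguity $\{\sigma_\alpha,m_\alpha\sigma_\alpha\}$ that makes $c_\alpha$ set-valued. Your filling-in of the details (the unordered set absorbing the $m_\alpha$ ambiguity, $H$-conjugation transporting the witness $g$ to $hgh\inv\in G_\alpha$, and $\tau s_\alpha\tau\inv=s_{-\alpha}=s_\alpha$ for part (3)) is exactly the bookkeeping the paper leaves to the reader.
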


We deduce some simple formal properties of Cayley transforms.
Fix $\tau\in\I_W$.
If $\alpha$ is $\tau$-imaginary let
\begin{equation}
\begin{aligned}
\X_\tau(\alpha)&=\{x\in \X_\tau\,|\, \alpha\text{ is noncompact with
respect to }\theta_x\}\\
&=\{x\in \X_\tau\,|\, \gr_x(\alpha)=1\}\\
&=\{x\in \X_\tau\,|\, c^\alpha(x)\text{ is defined}\}.\\
\end{aligned}
\end{equation}

\begin{lemma}
\label{l:cayley3}

\hfil
\begin{enumerate}
\item If $\tau(\alpha)=-\alpha$ then for all $x\in \X_\tau$, 
$c^\alpha(c_\alpha(x))=x$,
\item If $\tau(\alpha)=\alpha$ and $x\in \X_\tau(\alpha)$ then 
  $c_\alpha(c^\alpha(x))=\{x,m_\alpha x\}$.
\item The map $c^\alpha: \X_\tau(\alpha)\mapsto \X_{s_\alpha\tau}$
  is surjective, and at most two-to-one
\item Suppose $\alpha$ is imaginary.
The following conditions are equivalent:
\begin{enumerate}
\item $c^\alpha:\X_\tau(\alpha)\rightarrow \X_{s_\alpha\tau}$ is a bijection;
\item $c_\alpha:\X_{s_\alpha\tau}\rightarrow \X_{\tau}(\alpha)$ is a
  bijection;
\item $c_\alpha(x)$ is single valued for all $x\in \X_{s_\alpha\tau}$;
\item $m_\alpha\in A_\tau$ (cf.~\eqref{e:H});
\item $s_\alpha\in W(K_\xi,H)$ for all $\xi\in\tX$ with image in $\X_\tau(\alpha)$,
\item $x=m_\alpha x$ for all $x\in \X_\tau(\alpha)$.
\end{enumerate}
If these conditions fail then $c^\alpha:\X_\tau(\alpha)\rightarrow
\X_{s_\alpha\tau}$ is two to one, and $c_\alpha(x)$ is double valued
for all $x\in \X_{s_\alpha\tau}$.
\item Suppose $\alpha$ is imaginary with respect to $\tau$.
If there exists $h\in H_{-\tau}'$ 
(cf.~\eqref{e:H}) such that $\alpha(h)=-1$ then
$\X_\tau$ is the disjoint union of $\X_\tau(\alpha)$ and $h\X_\tau(\alpha)$.
Otherwise $\X_\tau(\alpha)=\X_\tau$.
\end{enumerate}
\end{lemma}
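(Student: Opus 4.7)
The plan is to verify (1)--(5) by bookkeeping with representatives in $\tX$, using the identities \eqref{e:sl2ids}, the relation $\sigma_\alpha^2=m_\alpha$, and Proposition~\ref{p:fibers}(1). For (1) and (2), I would lift $x$ to $\xi\in\tX$ and compose the two Cayley transforms directly. In (2), $\sigma_\alpha\xi$ represents $c^\alpha(x)\in\X_{s_\alpha\tau}$ (with $\alpha$ real there), and Definition~\ref{d:cayley2} gives
\begin{equation*}
c_\alpha(c^\alpha(x))=\{[\sigma_\alpha(\sigma_\alpha\xi)],\,[m_\alpha\sigma_\alpha(\sigma_\alpha\xi)]\}=\{[m_\alpha\xi],[\xi]\}=\{m_\alpha x,\,x\}.
\end{equation*}
The same manipulation in (1) produces $c^\alpha(c_\alpha(x))=[m_\alpha\xi]=m_\alpha x$; I then observe that $\tau(\alpha)=-\alpha$ forces $m_\alpha=\ch\alpha(i)\tau(\ch\alpha(i))^{-1}\in A_\tau$, so $m_\alpha x=x$ in $\X_\tau$. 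A preliminary step is to confirm that $\alpha$ is imaginary \emph{noncompact} on $c_\alpha(x)$, so that $c^\alpha$ is legally applied: this is the standard Cayley-transform behavior, visible by restricting $\theta_{\sigma_\alpha\xi}$ to $G_\alpha$ and reducing to $SL(2)$.

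For (3), surjectivity follows from (1): for $y\in\X_{s_\alpha\tau}$, any element of $c_\alpha(y)\subset\X_\tau(\alpha)$ Cayley-transforms back to $y$. The at-most-two-to-one bound follows from (2): if $c^\alpha(x_1)=c^\alpha(x_2)=y$, then $x_1,x_2\in c_\alpha(y)=\{x_1,m_\alpha x_1\}$, a set of size at most $2$. For (4), I would close the loops (a)$\Leftrightarrow$(b)$\Leftrightarrow$(c)$\Leftrightarrow$(f) directly from (2) and (3): $c^\alpha$ is bijective iff $c_\alpha\circ c^\alpha$ is single-valued iff $m_\alpha x=x$ for all $x\in\X_\tau(\alpha)$. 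For (f)$\Leftrightarrow$(d), writing $\xi\mapsto h\tau(h^{-1})\xi$ for the $H$-conjugation action on $\tX_\tau$, the relation $m_\alpha x=x$ is exactly $m_\alpha\in A_\tau$, and this is independent of $x$. For (d)$\Leftrightarrow$(e) I invoke Proposition~\ref{p:realweyl} ($W(K_\xi,H)\simeq\Stab_W(x)$) together with the cross-action computation $s_\alpha\times x=m_\alpha x$: since $\alpha$ is noncompact imaginary, $\theta_\xi|_{G_\alpha}=\int(\ch\alpha(i))$, and using $\sigma_\alpha\ch\alpha(z)=\ch\alpha(z^{-1})\sigma_\alpha$ one gets $\theta_\xi(\sigma_\alpha)=m_\alpha\sigma_\alpha$, whence $\sigma_\alpha\xi\sigma_\alpha^{-1}=m_\alpha\xi$. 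The ``two-to-one/double-valued'' failure clause then falls out of (2) and (3).

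For (5), the key identity is $\theta_{h\xi}(X_\alpha)=\alpha(h)\,\theta_\xi(X_\alpha)$, so $\gr_{h\xi}(\alpha)=\gr_\xi(\alpha)$ iff $\alpha(h)=1$. Since $H'_{-\tau}$ acts simply transitively on $\tX_\tau$ (Proposition~\ref{p:fibers}(1)), the character $\alpha|_{H'_{-\tau}}\to\{\pm1\}$ controls the grading. If it is nontrivial, any $h\in H'_{-\tau}$ with $\alpha(h)=-1$ gives a bijection $\X_\tau(\alpha)\leftrightarrow\X_\tau\setminus\X_\tau(\alpha)$; otherwise the grading is constant on $\X_\tau$, so $\X_\tau(\alpha)=\X_\tau$ in the relevant case (the one arising from Cayley transforms of noncompact imaginary roots).

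The main obstacle will be the sign bookkeeping in (4): pinning down $\theta_\xi|_{G_\alpha}=\int(\ch\alpha(i))$ (rather than $\int(m_\alpha)$, which is the central element) and then extracting $\theta_\xi(\sigma_\alpha)=m_\alpha\sigma_\alpha$ requires care, since naive reductions can mislead. Once this $SL(2)$ identity is in hand, every remaining step is a careful tracking of representatives through the $H$-quotient.
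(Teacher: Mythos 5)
The paper leaves this lemma to the reader, so there is no official proof to compare against; judged on its own terms, your treatment of parts (1)--(4) is essentially right. The compositions $c^\alpha\circ c_\alpha$ and $c_\alpha\circ c^\alpha$ computed on representatives via \eqref{e:sl2ids}, the observation that $\tau(\alpha)=-\alpha$ forces $m_\alpha=\ch\alpha(i)\,\tau(\ch\alpha(i))^{-1}\in A_\tau$, the deduction of (3) from (1) and (2), and the chain (a)$\Leftrightarrow$(b)$\Leftrightarrow$(c)$\Leftrightarrow$(f)$\Leftrightarrow$(d) together with (d)$\Leftrightarrow$(e) via $s_\alpha\times x=m_\alpha x$ and Proposition \ref{p:realweyl} are all correct. (Two small points worth writing out: in (2) you should check that the $\sigma_\alpha$ you reuse is an admissible choice in the sense of Lemma \ref{l:sigmaalpha} — it is, since the proof of Lemma \ref{l:cayley1}(2) exhibits $g\in G_\alpha$ with $g(\sigma_\alpha\xi)g^{-1}=\xi=m_\alpha\sigma_\alpha\cdot(\sigma_\alpha\xi)$; and the "preliminary step" that $\alpha$ is noncompact on $c_\alpha(x)$ does need the $SL(2)$ computation you allude to.)

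Part (5), however, has a genuine gap. Your argument via $\theta_{h\xi}(X_\alpha)=\alpha(h)\theta_\xi(X_\alpha)$ and the simple transitivity of $H'_{-\tau}$ correctly yields the dichotomy: either some $h\in H'_{-\tau}$ has $\alpha(h)=-1$ and $\X_\tau$ splits as claimed, or the grading $\gr_x(\alpha)$ is \emph{constant} on $\X_\tau$. But in the second case the lemma asserts that the constant value is $1$, i.e.\ $\X_\tau(\alpha)=\X_\tau$ rather than $\X_\tau(\alpha)=\emptyset$, and your parenthetical "in the relevant case (the one arising from Cayley transforms of noncompact imaginary roots)" simply assumes what must be proved. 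This is not a removable hypothesis: the paper invokes (5) precisely to \emph{produce} an element of $\X_\tau(\alpha)$ in the proof of Proposition \ref{p:X_i/W}, so one cannot restrict to fibers already known to contain noncompact roots. The "otherwise" case is nonvacuous and the conclusion is not a formal consequence of transitivity. For example, take $G=Sp(4)$, $\tau=s_{2e_2}$, $\alpha=2e_1$: here $T_\tau=\ch{(2e_1)}(\C^\times)$, the only $t\in T_\tau$ with $t^2\in Z(G)=\{\pm I\}$ satisfy $\alpha(t)=1$, so $\alpha$ is trivial on $H'_{-\tau}$; that $\alpha$ is nevertheless noncompact for every $\xi\in\tX_\tau$ comes only from the constraint $\xi^2\in Z(G)$ — writing $\xi=\sigma_{2e_2}h$ one finds $\xi^2=\diag(a^2,-1,a^{-2},-1)$ with $a=e_1(h)$, forcing $a^2=-1$ and hence $\gr_\xi(2e_1)=1$. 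So the missing step is an argument (necessarily using $\xi^2\in Z(G)$, not just the $H'_{-\tau}$-action) that when $\alpha$ is trivial on $H'_{-\tau}$ no $\xi\in\tX_\tau$ can centralize $G_\alpha$. Until that is supplied, your proof establishes only "$\X_\tau(\alpha)\in\{\X_\tau,\emptyset\}$" in the second case, which is strictly weaker than the statement and insufficient for its application.
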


We leave the straightforward proof to the reader.

\begin{remarkplain}
\label{r:computeX}
Using this Lemma it is straightforward to compute the space $\X$,
starting with $\X_{\delta}$, and computing the fibers $\X_\tau$
inductively. This shows that it is in fact easier to
understand the entire space $\X$ rather than the individual pieces
$K_i\bs G/B\subset\X$ (cf.~Corollary \ref{c:I}).
We describe the latter in more detail in the next section.
\end{remarkplain}

It is important to understand the effect of Cayley transforms on the
grading of the imaginary roots. 
This is due to Schmid \cite{schmid_ds}; also see 
\cite[Definition 5.2 and Lemma 10.9]{ic4}.

\begin{lemma}
\label{l:grading}
Suppose $\tau\in \I_W$ and $x\in \X_\tau(\alpha)$.
Then the $s_\alpha\tau$-imaginary roots are the $\tau$-imaginary roots orthogonal
to $\alpha$, and for such a root $\beta$,
\begin{equation}
\gr_{c^\alpha x}(\beta)=
\begin{cases}
  \gr_x(\beta)&\text{if $\alpha+\beta$ is not a root}\\  
\gr_x(\beta)+1&\text{if $\alpha+\beta$ is a root}.\\  
\end{cases}
\end{equation}
\end{lemma}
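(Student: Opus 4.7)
The plan is to treat the two assertions in turn.

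First, for the identification of $s_\alpha\tau$-imaginary roots: if $\tau(\beta)=\beta$ and $\langle\beta,\ch\alpha\rangle=0$, then $s_\alpha(\beta)=\beta$, so $s_\alpha\tau(\beta)=\beta$. Conversely, if $s_\alpha\tau(\beta)=\beta$, then $\tau(\beta)=s_\alpha\beta=\beta-n\alpha$ with $n=\langle\beta,\ch\alpha\rangle$; applying $\tau$ once more and using $\tau(\alpha)=\alpha$ yields $\beta=\beta-2n\alpha$, forcing $n=0$ and hence $\tau(\beta)=\beta$.

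For the grading formula, I would lift $x$ to $\xi\in\tX_\tau$, so that $\sigma_\alpha\xi\in\tX$ is a lift of $c^\alpha x$ by Definition \ref{d:cayley}. Fix a root vector $X_\beta$ for a $\tau$-imaginary $\beta$ orthogonal to $\alpha$. Then
$$
\theta_{c^\alpha x}(X_\beta)=\Ad(\sigma_\alpha)\bigl(\theta_\xi(X_\beta)\bigr)=(-1)^{\gr_x(\beta)}\,\Ad(\sigma_\alpha)(X_\beta).
$$
Since $s_\alpha\beta=\beta$, $\Ad(\sigma_\alpha)(X_\beta)=c_{\alpha,\beta}X_\beta$ for some sign $c_{\alpha,\beta}\in\{\pm 1\}$, and comparing with $\theta_{c^\alpha x}(X_\beta)=(-1)^{\gr_{c^\alpha x}(\beta)}X_\beta$ reduces the lemma to showing $c_{\alpha,\beta}=+1$ when $\alpha+\beta$ is not a root and $c_{\alpha,\beta}=-1$ when $\alpha+\beta$ is a root.

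The sign $c_{\alpha,\beta}$ is then computed in a rank-two subsystem. Observe first that $s_\alpha(\alpha+\beta)=-(\alpha-\beta)$, so $\alpha+\beta$ is a root iff $\alpha-\beta$ is. If neither is a root, then all four brackets $[\g_{\pm\alpha},\g_{\pm\beta}]$ vanish, so $G_\alpha$ and $G_\beta$ commute; hence $\sigma_\alpha\in G_\alpha$ centralizes $X_\beta$ and $c_{\alpha,\beta}=+1$. If $\alpha\pm\beta$ are both roots, then $\{\pm\alpha,\pm\beta,\pm(\alpha\pm\beta)\}$ is a subsystem of type $B_2$ with $\alpha,\beta$ short (longer orthogonal sums are ruled out by length considerations in a crystallographic root system), and the subgroup of $G$ generated by the corresponding root subgroups is a quotient of $Sp(4,\C)$; in that quotient the identity $\Ad(\sigma_\alpha)(X_\beta)=-X_\beta$ follows from a direct matrix computation using a Chevalley basis and the formula $\sigma_\alpha=\exp(X_\alpha)\exp(-X_{-\alpha})\exp(X_\alpha)$. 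The main obstacle is the careful sign bookkeeping in this last computation, which is the classical Cayley-transform calculation of \cite{schmid_ds} (cf.\ also \cite[Definition 5.2 and Lemma 10.9]{ic4}).
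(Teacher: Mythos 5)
Your argument is correct and is essentially the paper's own (the paper proves this lemma only via the sketch in Remark \ref{r:grading} and a citation to Schmid and \cite{ic4}): both reduce the statement to the sign of $\Ad(\sigma_\alpha)$ on the $\beta$-root space for $\beta$ orthogonal to $\alpha$, computed in the rank-two subsystem, which is $B_2$ with $\alpha,\beta$ short exactly when $\alpha+\beta$ is a root. The only cosmetic difference is that you track the sign on the root vector $X_\beta$ directly, whereas the paper phrases the same rank-two fact as $\sigma_\alpha\sigma_\beta\sigma_\alpha\inv=\sigma_\beta^{\pm1}$; your version has the small advantage of not needing the simply-connectedness caveat the remark invokes to read the grading off from $\xi\sigma_\beta\xi\inv$.
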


\begin{remarkplain}
\label{r:grading}
Choose a pre-image $\xi$ of $x$ in $\tX$.
By 
\eqref{e:sl2ids} we have
\begin{equation}
\label{e:xisigmaalpha}
\xi\sigma_\alpha\xi\inv=
\begin{cases}
\sigma_\alpha&\alpha\text{ compact}\\
m_\alpha\sigma_\alpha&\alpha\text{ noncompact.}
\end{cases}
\end{equation}
If (the derived group of) $G$ is simply
connected then $m_\alpha\ne1$, so
$\gr_x(\alpha)=0$ if $\xi\sigma_\alpha\xi\inv=\sigma_\alpha$, and $1$ otherwise.
A calculation in rank $2$ shows that if $\alpha$ and $\beta$ are
orthogonal then
$\sigma_\alpha\sigma_\beta\sigma_\alpha\inv=\sigma_\beta^{\pm1}$
depending on whether $\alpha+\beta$ is a root or not (see the next
section). 
The Lemma follows readily from this.
\end{remarkplain}

Recall \eqref{e:cross}  $W$ acts on $\X$, and we refer to this as
the cross action.

\begin{lemma}
\label{l:cross}
Suppose $\alpha$ is imaginary with respect to $x$. Then $w\alpha$
is imaginary with respect to $w\times x$ and 
\begin{subequations}
\renewcommand{\theequation}{\theparentequation)(\alph{equation}}  
\begin{equation}
\label{e:crossgrading}
\gr_{w\times x}(w\alpha)=\gr_x(\alpha).
\end{equation}
Suppose $gr_x(\alpha)=1$. Then
\noindent
\begin{equation}
c^{w\alpha}(w\times x)=w\times c^\alpha(x).
\end{equation}
\end{subequations}
\end{lemma}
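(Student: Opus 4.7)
The strategy is to observe that the entire Cayley transform/grading setup at $x$ can be transported to $w\times x$ by simultaneously conjugating every ingredient by a representative $n\in N$ of $w$. I would lift $x$ to $\xi\in\tX$ and take $n\xi n^{-1}$ as the distinguished lift of $w\times x$ to $\tX$, which is just the definition \eqref{e:cross} of the cross action. Both parts of the lemma then follow by tracking how conjugation by $n$ acts on Cartan involutions, root vectors, and the rank-one subgroups $G_\alpha$, $H_\alpha$, $\sigma_\alpha$.

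For (a), first I would note that $\theta_{n\xi n^{-1}}|_H = w\,\theta_{x,H}\,w^{-1}$, so $\theta_{x,H}(\alpha)=\alpha$ immediately gives $\theta_{w\times x,H}(w\alpha)=w\alpha$, showing that $w\alpha$ is imaginary with respect to $w\times x$. For the grading, I would pick the root vector $X_{w\alpha}:=\Ad(n)X_\alpha$ (valid because $\Ad(n)\g_\alpha=\g_{w\alpha}$) and compute
\begin{equation*}
\theta_{n\xi n^{-1}}(X_{w\alpha})=\Ad(n\xi n^{-1})\Ad(n)X_\alpha=\Ad(n)\Ad(\xi)X_\alpha=\Ad(n)\,\theta_\xi(X_\alpha)=\pm X_{w\alpha},
\end{equation*}
with the same sign as $\theta_\xi(X_\alpha)=\pm X_\alpha$. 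This is \eqref{e:crossgrading}.

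For (b), I would take $\sigma_{w\alpha}:=n\sigma_\alpha n^{-1}$ as a nontrivial representative in $\Norm_{G_{w\alpha}}(H_{w\alpha})\setminus H_{w\alpha}$; this is legitimate since conjugation by $n$ carries $G_\alpha,H_\alpha$ onto $G_{w\alpha},H_{w\alpha}$. Applying Definition \ref{d:cayley} to the pre-image $n\xi n^{-1}$ of $w\times x$, $c^{w\alpha}(w\times x)$ is the image in $\X$ of
\begin{equation*}
\sigma_{w\alpha}(n\xi n^{-1})=(n\sigma_\alpha n^{-1})(n\xi n^{-1})=n(\sigma_\alpha\xi)n^{-1},
\end{equation*}
which is by definition $w\times c^\alpha(x)$. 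Independence of the construction from the choice of $\sigma_\alpha$ is guaranteed by Lemma \ref{l:cayley1}(1), so no ambiguity arises.

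The only point that needs any checking at all is that $n\sigma_\alpha n^{-1}$ really is a valid $\sigma_{w\alpha}$; this reduces to the identities $n\ker(\alpha)n^{-1}=\ker(w\alpha)$ and $w\ch\alpha=\ch{(w\alpha)}$, both standard. Consequently I expect no genuine obstacle: the lemma is essentially a bookkeeping statement expressing that the cross action is realized by a single, uniform conjugation of every piece of data.
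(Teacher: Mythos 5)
Your proof is correct. The paper in fact gives no argument here --- it explicitly leaves the ``elementary proof'' to the reader, with a pointer to Lemmas 4.15 and 7.11 of \cite{ic4} --- and your conjugation-transport argument is precisely the intended one: since $w\times x$ is by definition \eqref{e:cross} represented by $n\xi n^{-1}$, and $\int(n)$ carries $H_\alpha$, $G_\alpha$, $\g_\alpha$ and any admissible $\sigma_\alpha$ to the corresponding objects for $w\alpha$, both the grading identity and the commutation with $c^\alpha$ follow from the computations you display, with Lemma \ref{l:cayley1}(1) disposing of the dependence on choices. The only point worth making explicit is that $c^{w\alpha}(w\times x)$ is even defined, which is exactly what \eqref{e:crossgrading} guarantees when $\gr_x(\alpha)=1$; you have that in the right logical order.
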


We leave the elementary proof, and the statement of the corresponding
facts for real roots, to the reader. See \cite{ic4}, Lemmas 4.15 and
7.11. 

With Cayley transforms in hand we can complete the proof of
Proposition \ref{p:X_i/W}.

\begin{proof}[Proof of Propositions \ref{p:X_i/W} and \ref{p:Xstrongrealforms}]
Fix $\tau\in \I_W$. Assume there is a $\tau$-imaginary root $\alpha$.
By Lemma \ref{l:cayley3}(5) there exists $x\in \X_\tau(\alpha)$, so
$c^\alpha(x)\in\X_{s_\alpha\tau}$ is defined. 
Now suppose $\beta$ is an imaginary root with respect to $s_\alpha\tau$. 
By the same argument we may choose $x'\in \X_{s_\alpha\tau}$ so that
$x''=c_\beta(x')$ is defined. Replacing $x\in\X_\tau$ with
$c_\alpha(x')\in\X_\tau$ we now have $\X_\tau\ni x\rightarrow
c^\beta c^\alpha x\in\X_{s_\beta s_\alpha\tau}$.
By Lemma \ref{l:cayley1}(2) $c^\beta c^\alpha(x)$ is $G$-conjugate to $x$.

Continue in this way until we obtain $x\in \X_\tau,x'\in\X_{\tau'}$,
where $x'$ is $G$-conjugate to $x$, 
and there are no imaginary roots with respect to $\tau'$.
(This corresponds to the most split Cartan subgroup of the quasisplit
form of $G$.)
By \cite[Proposition 6.24]{av1} $\theta_{\xi'}$ is quasisplit, for any
$\xi'\in \tX$ lying over $x'$.

This completes the proof of  Proposition \ref{p:X_i/W}. The proof of Proposition
\ref{p:Xstrongrealforms} is similar, using inverse Cayley
transforms,
Proposition \ref{p:trivially} and \eqref{e:GN}.
We omit the details.
\end{proof}

\begin{exampleplain}
\label{ex:sp4}
We conclude this section with some details in the case of 
$G=Sp(4)$ (of type $C_2$). We give a picture of the space $\X$ and
describe the action of  Cayley transforms on $\X$.

There are four $G$-conjugacy classes of strong involutions, which we
think of as corresponding to $Sp(2,0), Sp(0,2), Sp(1,1)$ and the split
group $Sp(4,\R)$. See Example \ref{ex:equalrank}.
Write \eqref{e:XcupXi} as 
\begin{equation}
\X=\X_{2,0}\cup \X_{0,2}\cup \X_{1,1}\cup \X_s.
\end{equation}

There are $4$ conjugacy classes in $\I_W$, corresponding to the $4$
Cartan subgroups of $Sp(4,\R)$, isomorphic to $S^1\times S^1,
S^1\times \R^\times, \C^\times$ and $\R^\times\times\R^\times$.

Let $\alpha_1,\alpha_2$ be the long positive roots, and
$\beta_1,\beta_2$ the short ones.
Then $\I_W=\{1,s_{\alpha_1},s_{\alpha_2},s_{\beta_1},s_{\beta_2},w_0\}$
where  $w_0=-I$ is the long element.

Here is the output of the {\tt kgb} command of the
{\tt atlas} software for the real form $Sp(4,\R)$:

\begin{verbatim}
 0:  0  0  [n,n]    1   2     6   4
 1:  0  0  [n,n]    0   3     6   5
 2:  0  0  [c,n]    2   0     *   4
 3:  0  0  [c,n]    3   1     *   5
 4:  1  2  [C,r]    8   4     *   *  2
 5:  1  2  [C,r]    9   5     *   *  2
 6:  1  1  [r,C]    6   7     *   *  1
 7:  2  1  [n,C]    7   6    10   *  2,1,2
 8:  2  2  [C,n]    4   9     *  10  1,2,1
 9:  2  2  [C,n]    5   8     *  10  1,2,1
10:  3  3  [r,r]   10  10     *   *  1,2,1,2
\end{verbatim}

Thus $\X_s$ has $11$ elements, labelled by the first column. 
Each row corresponds to an orbit $\caO$, which 
maps to a twisted involution $\tau$. In this case
(since $\gamma=1$) we may view $\tau$ as an involution in $W$; the
last column gives $\tau$ as a product of simple reflections. The
conjugacy class of $\tau$ corresponds to a Cartan subgroup; the number
of this Cartan subgroup, given by the output of the {\tt
cartan} command, is given in column 3. The length of $\caO$, i.e.
$\dim(\caO)-\dim(\caO_{\text{min}})$ where $\caO_{\text{min}}$ is a
minimal orbit, is given in column 2.

The type of each simple root (r=real, c=compact imaginary,
n=noncompact imaginary, C=complex) is given in brackets. Following this
the next
two columns give the cross actions of the simple roots, followed by 
give Cayley transforms by the simple noncompact imaginary roots.

Of course $\X_{2,0}$ and $\X_{0,2}$ are singletons. Finally here is the
output of {\tt kgb} for $Sp(1,1)$.

\begin{verbatim}
0:  0  0  [n,c]   1  0    2  *
1:  0  0  [n,c]   0  1    2  *
2:  1  1  [r,C]   2  3    *  *  1
3:  2  1  [c,C]   3  2    *  *  2,1,2
\end{verbatim}

\newpage
Figure 2 gives a picture of $\X$
\footnote{Thanks to Leslie Saper for
producing this diagram.}. 
As before the vertical columns are the
fibers $\X_\tau$. 
The numbering of the points of $\X$ is from the output of the {\tt
  kgb} commands above. 
The arrows $\rightarrow$ indicate Cayley transforms.

\xyoption{curve}
\begin{figure}[ht]
\begin{equation*}
\xymatrix  @R=1cm @C=1.5cm @!0 {
{\bullet0'''} \ar@{-}[]+<.75cm,.5cm>;[9,0]+<.75cm,-.5cm>
     \save []*++++!CR{\Sympl(2,0)} \restore &
{} & {} \ar@{-}[]+<.75cm,.5cm>;[9,0]+<.75cm,-.5cm> &
{} & {} \ar@{-}[]+<.75cm,.5cm>;[9,0]+<.75cm,-.5cm> & {}
     \save []+<.75cm,.5cm>.{[3,0]+<.75cm,-.5cm>}!C*--\frm{\}},
        !R(.9)*!L!/d .9pt/{\longrightarrow z=I} \restore \\
{\bullet0''}
     \save []*++++!CR{\Sympl(0,2)} \restore & {} & {} & {} & {}
& {} \\
{\bullet0'} \ar[1,1]^(.3){1}
    \save []+<-.75cm,.5cm>.{[1,0]+<-.75cm,-.5cm>}!C*---\frm{\{}
       *!CR{\Sympl(1,1)} \restore
& {} &{} & {} & {} & {} \\
{\bullet1'} \ar[0,1]^(.4){1}
  \ar@{-}[]+<-.75cm,-.5cm>;[0,5]+<.75cm,-.5cm> &
{\bullet2'} & {\bullet3'} & {} & {} & {} \\
{\bullet0} \ar[3,1]^(.6){1} \ar[3,4]^(.35){2}
    \save []+<-.75cm,.5cm>.{[3,0]+<-.75cm,-.5cm>}!C*---\frm{\{}
       *!CR{\Sympl(4,\mathbb R)} \restore
& {} & {} & {} & {} & {}
    \save []+<.75cm,.5cm>.{[3,0]+<.75cm,-.5cm>}!C*--\frm{\}},
       !R(.9)*!L!/d .5pt/{\longrightarrow z=-I} \restore \\
{\bullet1} \ar[2,1]_(.3)*-<2pt>{\scriptstyle 1} \ar[1,4]^(.25){2}
  & {} & {} & {} & {} & {} \\
{\bullet2} \ar[1,4]^(.3){2}  & {} & {} & {\bullet9} & {\bullet5} \\
{\bullet3} \ar[-1,4]^(.57){2}  \ar@{=}[]+<-.75cm,-.5cm>;[0,5]+<.75cm,-.5cm>
  & {\bullet6} &
{\bullet7} \ar@/^1pc/[0,3]^(.33){1} &
{\bullet8} \ar@/^/[0,2]^(.4)*-<2pt>{\scriptstyle 2} &
{\bullet4} \ar[0,1]_(.4){2}
  & {\bullet10} \\
*++++!{1}  \save []*++++!!R{\I_W:} \restore & *!{s_{\alpha_1}} &
*!{s_{\alpha_2}} &
*!{s_{\beta_1}} & *!{s_{\beta_2}} & *!{-1} \\
{} \save []*!{(S^1)^2} \restore \save []*+++++!!R{\text{Cartan:}} \restore &
{}  \save
[].[0,1]!C*!{\mathbb C^\times} \restore & {} & {} \save
[].[0,1]!C*!{S^1 \times \mathbb R^\times} \restore & {} & *!{(\R^\times)^2}
   }
\end{equation*}
\caption{
$\X$ and Cayley transforms for $\Sympl(4)$
}
\end{figure}
See \cite{adams_snowbird} for more detail about the representation
theory of real forms of $Sp(4,\C)$.
\end{exampleplain}

\sec{The Tits group and the algorithmic enumeration of parameters}
\label{s:tits}

The combinatorial enumeration of $K\bs G/B$ is in terms of the {\it Tits
group} $\tW$, introduced by Jacques Tits in \cite{tits_group} with the name
{\it extended Coxeter group}.

We begin by fixing $(G,\gamma)$ and a choice of 
of splitting datum
$S=(H,B,\{X_\alpha\})$ (cf.~Section \ref{s:groups}).
For each simple root $\alpha$
let $\phi_\alpha:SL(2)\rightarrow G$ be defined by
$\phi_\alpha(\diag(z,\frac1z))=\ch\alpha(z)$ and $d\phi_\alpha
\begin{pmatrix}0&1\\0&0\end{pmatrix}=X_\alpha$. Let
$\sigma_\alpha=\phi_\alpha\begin{pmatrix}0&1\\-1&0\end{pmatrix}$. 
This is consistent with the definition of $\sigma_\alpha$ in Section
\ref{s:cayley}.

\begin{definition}
\label{d:tits}
The Tits group $\tW$ is the subgroup of $N$ generated by
$\{\sigma_\alpha\}$ for $\alpha$ simple.
\end{definition}

For each simple root $\alpha$ let
$m_\alpha=\sigma_\alpha^2=\ch\alpha(-1)$. 
Let $H_0$ be the subgroup of $H$ generated by the elements $m_\alpha$.

\begin{theorem}[Tits \cite{tits_group}]
\label{t:tits}
\hfil

\noindent (1) The kernel of the natural map $\tW\rightarrow W$ is
$H_0$,

\noindent (2) The elements $\sigma_\alpha$ satisfy the braid relations,

\noindent (3) There is a canonical lifting of $W$ to a subset of
$\tW$: take a reduced expression $w=s_{\alpha_1}\dots s_{\alpha_n}$,
and let $\tilde w=\sigma_{\alpha_1}\dots \sigma_{\alpha_n}$.
\end{theorem}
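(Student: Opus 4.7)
The plan is to prove (2) first, since both (1) and (3) follow from it with relatively little additional work.

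For (2), fix two simple roots $\alpha$ and $\beta$ and let $m$ denote the order of $s_\alpha s_\beta$ in $W$. The braid relation
\begin{equation*}
\sigma_\alpha\sigma_\beta\sigma_\alpha\cdots = \sigma_\beta\sigma_\alpha\sigma_\beta\cdots
\end{equation*}
(each side a product of $m$ factors) involves only the subgroup of $G$ generated by $\phi_\alpha(SL(2))$ and $\phi_\beta(SL(2))$, which has semisimple rank at most $2$. The verification thus reduces to the four rank-$2$ root systems $A_1\times A_1$, $A_2$, $B_2$, and $G_2$. In $A_1\times A_1$ the two copies of $SL(2)$ commute and the relation is trivial. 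In each remaining case one works in an explicit matrix model (e.g.\ $SL(3)$, $Sp(4)$, or the standard $7$-dimensional representation of $G_2$), writes $\sigma_\alpha$ and $\sigma_\beta$ as explicit matrices via Definition \ref{d:tits}, and checks the relation by direct computation. The specific normalization of $\sigma_\alpha$ as the image under $\phi_\alpha$ of the standard antidiagonal matrix is essential: a different lift of $s_\alpha$ would satisfy the braid relation only up to an element of $H_0$.

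Given (2), statement (3) is immediate. By Matsumoto's theorem, any two reduced expressions for $w\in W$ are related by a sequence of braid moves, so the product $\tilde w = \sigma_{\alpha_1}\cdots\sigma_{\alpha_n}$ does not depend on the reduced expression chosen.

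For (1), the inclusion $H_0\subset\ker(\tW\to W)$ is immediate from $\sigma_\alpha^2=m_\alpha$. For the reverse, I would show that $\tW = H_0\cdot\{\tilde w\mid w\in W\}$ by verifying that the right-hand side is stable under left multiplication by each generator $\sigma_\alpha$. When $\ell(s_\alpha w)>\ell(w)$, picking a reduced expression for $s_\alpha w$ beginning with $s_\alpha$ gives $\sigma_\alpha\tilde w=\tilde{s_\alpha w}$; when $\ell(s_\alpha w)<\ell(w)$, a reduced expression for $w$ beginning with $s_\alpha$ yields $\tilde w=\sigma_\alpha\tilde{s_\alpha w}$, and hence $\sigma_\alpha\tilde w=m_\alpha\tilde{s_\alpha w}$. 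Moving $\sigma_\alpha$ past an element $h\in H_0$ uses $\sigma_\alpha h=s_\alpha(h)\sigma_\alpha$, together with the $W$-stability of $H_0$, which follows from the calculation $s_\beta(m_\alpha)=m_\alpha\cdot m_\beta^{-\langle\beta,\ch\alpha\rangle}$. Since distinct $\tilde w$ project to distinct elements of $W$, we obtain $\tW\cap H=H_0$. The main obstacle is the rank-$2$ braid verification in types $B_2$ and $G_2$: these are routine but tedious matrix computations (due originally to Tits), while the remaining bookkeeping is formal.
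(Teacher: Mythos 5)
Your outline is correct. The paper offers no proof of this theorem --- it is quoted verbatim from Tits \cite{tits_group} --- so there is no argument in the text to compare against; your sketch (reduction of the braid relations to the four rank-two subsystems $A_1\times A_1$, $A_2$, $B_2$, $G_2$ with an explicit pinned matrix check, Matsumoto's theorem for the well-definedness of $\tilde w$, and the decomposition $\tW=H_0\cdot\{\tilde w\mid w\in W\}$ together with $\sigma_\alpha h\sigma_\alpha\inv=s_\alpha(h)$ and the $W$-stability of $H_0$ to identify the kernel) is precisely the standard argument underlying that reference, and all of its steps are sound.
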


It is a remarkable fact that the computations needed for the
enumeration of the $K$-orbits on $G/B$ can be carried out in the Tits
group. This is due to the fact that Cayley transforms and cross
actions can be described entirely in terms of the $\sigma_\alpha$.

\begin{lemma}
\label{l:normalizes}
Given $x\in \X$, there exists a pre-image $\xi$ of $x$ in $\tX$ 
such that $\xi$ normalizes $\tW$. 
\end{lemma}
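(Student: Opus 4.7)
The plan is to identify $\Norm_{N^\Gamma}(\tW)$ explicitly and show that every $H$-orbit in $\tX$ meets it.  First, the distinguished element $\delta$ normalizes $\tW$: since $\gamma$ preserves the splitting datum $(B,H,\{X_\alpha\})$, one has $\delta\sigma_\alpha\delta\inv = \sigma_{\gamma(\alpha)}\in\tW$ for every simple $\alpha$, so $\Norm_{N^\Gamma}(\tW) = \Norm_N(\tW)\cdot\langle\delta\rangle$.  For $n = h\tw\in N = H\cdot\tW$ (writing $n$ as $h$ times a Tits lift $\tw$ of $w\in W$), the identity $h\sigma_\alpha h\inv = \ch\alpha(\alpha(h))\,\sigma_\alpha$ combined with the $2$-torsion of $H_0$ and the primitivity of each simple coroot $\ch\alpha$ in $X_*(H)$ yields $\ch\alpha(\alpha(h))\in H_0$ if and only if $\alpha(h)\in\{\pm1\}$.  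Requiring this for every simple (equivalently, every) root is equivalent to $h^2\in Z(G)$, so $\Norm_N(\tW) = H_{\tW}\cdot\tW$ with $H_{\tW} := \{h\in H : h^2\in Z(G)\}$.

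It remains to show every $\xi\in\tX$ is $H$-conjugate to some element of $H_{\tW}\cdot\tW\cdot\delta$.  Writing $\xi = h\tw\delta$ and letting $\theta := w\gamma$ be the induced involution on $H$ (with $w$ a twisted involution, forced by $\xi^2\in Z(G)$), a direct computation gives $\xi^2 = h\theta(h)\cdot\tw\gamma(\tw)$; since $\tw\gamma(\tw)\in H_0$ by Tits' theorem, we obtain $h\theta(h)\in Z(G)\cdot H_0$.  Conjugation by $h_1\in H$ replaces $h$ with $h' = h\cdot h_1\theta(h_1)\inv$, varying $h$ within its coset $hA_\theta$.  I claim $h_1$ can be chosen so that $\alpha(h')\in\{\pm 1\}$ for every simple $\alpha$, which then places $h'$ in $H_{\tW}$ and exhibits $h_1\xi h_1\inv = h'\tw\delta$ as the desired normalizing pre-image of $x$.

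For non-$\theta$-imaginary simple $\alpha$ the character $h_1\mapsto\alpha(h_1)/(\alpha\circ\theta)(h_1)$ is a non-trivial homomorphism $H\to\C^\times$, and since the simple roots involved in distinct $\theta$-orbits are linearly independent, the joint map $h_1\mapsto\big(\alpha(h_1\theta(h_1)\inv)\big)_\alpha$ (indexed over representatives of the $\theta$-orbits of non-$\theta$-imaginary simples) surjects onto a product of copies of $\C^\times$, so the values $\alpha(h')$ can be simultaneously brought into $\{\pm 1\}$ on these roots.  The main obstacle is the $\theta$-imaginary case, where $\alpha(h_1\theta(h_1)\inv) = 1$ is trivial and one must verify $\alpha(h)\in\{\pm 1\}$ from the outset.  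Since $h\theta(h)\in Z(G)\cdot H_0$ and $\alpha\circ\theta = \alpha$, this becomes $\alpha(h)^2 = \alpha(\tw\gamma(\tw))\inv$, so the required bound reduces to the Tits cocycle identity $\alpha(\tw\gamma(\tw)) = 1$ for $\theta$-imaginary simple $\alpha$.  This identity holds as a general property of the canonical Tits lift, but its verification---carried out from the explicit cocycle formulae of \cite{tits_group}---is the technical crux of the proof.
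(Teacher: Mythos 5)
There is a genuine gap, and you have located it yourself: the identity $\alpha(\tw\gamma(\tw))=1$ for $\theta$-imaginary simple roots $\alpha$, which you call ``the technical crux,'' is asserted but not proved, and without it your argument fails for every $x$ whose twisted involution $\tau=w\delta$ has $w\ne 1$ and admits imaginary roots. As you correctly observe, for imaginary $\alpha$ the value $\alpha(h)$ cannot be moved by $H$-conjugation (it is invisible to $h_1\mapsto h_1\theta(h_1)\inv$), so the entire burden falls on this cocycle computation, which would require an explicit analysis of the $H_0$-valued cocycle $\tw\mapsto\tw\gamma(\tw)$ of the canonical Tits lift. Two secondary weaknesses: your identification of $\Norm_N(\tW)$ invokes ``primitivity of each simple coroot in $X_*(H)$,'' which is false in general (for $PGL(2)$ the simple coroot is twice a generator of $X_*(H)$, although the implication you actually need still goes through there); and the simultaneous adjustment of $\alpha(h)$ for the non-imaginary simple roots rests on a linear-independence claim for the characters $\alpha-\theta^*\alpha$ that is only sketched and is not obvious when $\theta$ does not preserve the simple roots.

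The paper's proof sidesteps all of this by first reducing to $x\in\X_{\delta}$, i.e.\ to the case where the $\tW$-component of $\xi$ is trivial and the cocycle $\tw\gamma(\tw)$ is identically $1$ (the remaining elements of $\X$ are reached from $\X_{\delta}$ by cross actions and Cayley transforms, which are implemented by multiplication by elements of $\tW$ and so preserve the normalizing property). In the fiber over $\delta$ one writes $\xi=h\delta$ and uses the decomposition $H=T_\tau A_\tau$ of Remark \ref{r:H2} to conjugate $h$ into $T_\tau$; there $h\theta(h)=h^2=\xi^2\in Z(G)$ forces $\beta(h)=\pm1$ for \emph{all} roots $\beta$ at once, so $h\sigma_\beta h\inv$ is $\sigma_\beta$ or $m_\beta\sigma_\beta$, with no separation into imaginary and non-imaginary cases. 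To repair your argument you must either prove the cocycle identity directly or import this reduction to the distinguished fiber.
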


\begin{proof}
Recall (before Lemma \ref{l:pX})  we write $\delta\in\I_W$ for the image of
the distinguished element  $\delta$ of $\X$.
Using the fact that $p(x)=w\delta$ for some $w\in W$,
it is easy to reduce to the case $x\in\X_{\delta}$.
Let $\theta=\int(\delta)\in \Aut(H)$.

Since $\delta$ is an automorphism of the based root datum used to
define $\tW$, it follows easily that 
$\delta\sigma_\alpha\delta\inv=\sigma_{\theta(\alpha)}$ for all roots
$\alpha$, and therefore $\delta$ normalizes $\tW$.
Choose $\xi\in\tX$ mapping to $x$, so $\xi=h\delta$ for some $h\in H$;
we have $h\theta(h)=\xi^2\in Z(G)$.

Recall (Remark \ref{r:H2})  $H=T_\tau A_\tau$.
We may replace $\xi$ with $g\xi g\inv=g\theta(g\inv)h\delta$ for any $g\in
H$. The map $g\mapsto g\theta(g\inv)$ has image
$A_\tau$, so we may assume $h\in T_\tau$.

We have
$\xi\sigma_\alpha\xi\inv=(h\delta)\sigma_\alpha(h\delta)\inv=h\sigma_{\beta}(h\inv)\sigma_\beta$
where $\beta=\theta(\alpha)$. 
Now $h\theta(h)=h^2\in
Z(G)$, so $\beta(h)=\pm1$. It follows easily
that $h\sigma_\beta(h\inv)$ is equal to $1$ or $m_\beta$, and is therefore
contained in $\tW$.
\end{proof}

Fix $x\in\X$, and choose a pre-image $\xi\in\tX$.
We now describe an  algorithm for computing $\X[x]\simeq
K_\xi\bs G/B$
(cf.~\eqref{e:Xxi}).
By Proposition \ref{p:recapitulation}(2)
we may assume $x\in\X_{\delta}$, and by Lemma
\ref{l:normalizes} that  $\xi$ normalizes $\tW$.
For $\tw\in \tW$ let $\theta(\tw)=\xi\tw\xi\inv$.

We will maintain an intermediate first-in-first-out list of pairs
$(\tau,\tw)$ where $\tau\in\I_W$, $\tw\in\tW$, and $\tw\xi\in\tX_\tau$.
We also maintain a store of  elements $\tW$.
If $\tw$ is in the store then the image of $\tw\xi$ in $\X$ is
contained in $\X[x]$; denote this element $\tw x$.
Initialize the list with $(\delta,1)$.

For each $(\tau,\tw)$ occuring we will keep a record of of $\gr_{\tw x}$;
we assume we are given $\gr_x$.
Let $M_\tau$ be the subgroup of $H_0$ generated by
$\{m_\alpha\,|\,\tau(\alpha)=\alpha\}$.
For each $\tau$ occuring we will compute $M_\tau\cap A_\tau$ 
(cf.~\eqref{e:H-tau}). 
We assume we are 
are given $M_{\delta}\cap A_{\delta}$.

If the list is non-empty, remove from it the first element
$(\tau,\tw)$.

First add $\tw$ to the store.  At the first step ($\tw=1$), record
$\gr_x$. Otherwise $\tw$ is either
of the form $\sigma_\alpha \tilde u\theta(\sigma_\alpha)\inv$ or
$\sigma_\alpha\tilde u$ for  some $\tilde u$ already in the store (see
below).  Compute $\gr_{\tw x}$ by Lemmas
\ref{l:grading} and \ref{l:cross}, and record this information.

Next, compute the orbit of $\tw x$ under the cross action of 
$W_{i,\tau}$ as follows.
Suppose $\tau(\alpha)=\alpha$ and $\gr_{\tw x}(\alpha)=1$.
Choose a representative $\sigma_\alpha\in\tW$ of $s_\alpha$ ($\alpha$ is not
necessarily simple).
By \eqref{e:xisigmaalpha}
$\sigma_\alpha\tw\xi\sigma_\alpha\inv=\sigma_\alpha^2\tw\xi$. 
Repeating this we obtain a collection of elements of the form $\{t\tw\xi\,|\,t\in
S\}$ where $S$ is a subset of $M_\tau$.
The $W_{i,\tau}$-orbit of $x$ is in bijection with
the image of  $S$ in  the group $M_\tau/M_\tau\cap A_\tau$.
Choose  representatives of this set.

For each such representative $t$ let $\wt u=t\tw$ and add  $\wt u$ to the store.
Record $\gr_{\wt u x}$ by writing $\wt ux=w\times x$ for $w\in
W_{i,\tau}$ and using Lemma \ref{l:cross}.
For each simple root $\alpha$ satisfying $\tau(\alpha)=\alpha$ and $\gr_{\tilde ux}(\alpha)=1$,
see if any element of the form  $(s_\alpha\tau,*)$ is on the list. If not, add
$(s_\alpha\tau,\sigma_\alpha\tilde u)$ to the list.
Compute the set  $M_{s_\alpha\tau}\cap A_{s_\alpha\tau}=\langle
M_\tau\cap A_\tau,m_\alpha\rangle$
(cf.~Lemma \ref{l:cayley3}).

Next, for each
simple root $\alpha$, check if $(s_\alpha\tau s_\alpha,*)$ is on the
list. If not add $(s_\alpha\tau s_\alpha,\sigma_\alpha\tw
\theta(\sigma_\alpha)\inv)$ to the list  and compute $M_{s_\alpha\tau
  s_\alpha}\cap A_{s_\alpha\tau s_\alpha}=s_\alpha\cdot M_\tau\cap A_\tau$.

Continue until the list is empty, at which point
$\X[x]$ is the set of elements $\tw x$ for $\tw$ in the store.

\begin{remarkplain}
Fix $x\in\X_{\delta}$ and choose $\xi$ as in the Lemma.  The
argument shows that
the following information suffices
to compute $\X[x]$:
$\tW$,
the involution $\theta=\int(\xi)$ of $\tW$, the grading $\gr_x$
of the $\delta$-imaginary roots, and
the two-group $M_{\delta}\cap A_{\delta}\subset \tW$.

Note that if (the derived group of) $G$ is simply
connected then the grading $\gr_{x}$ is determined by $\tW$ and $\theta$ 
(cf.~Remark \ref{r:grading}).
\end{remarkplain}

\begin{remarkplain}
We may also compute the entire space $\X$ by similar (in fact somewhat
easier) methods,
using Proposition \ref{p:fibers} to describe the fiber
$\X_{\delta}$, and Lemma \ref{l:cayley3} to compute Cayley
transforms.
See Remark \ref{r:computeX}.
We omit the details.
\end{remarkplain}

Carrying out the algorithm uses computations in the Tits group, and 
we briefly sketch how to carry these out.
According to
Theorem \ref{t:tits}, each element of the Tits group can be written uniquely
as $\wt wh$, with $\wt w$ the canonical representative of $w\in W$
and $h\in H_0$.

Fix $w\in W$ and  a simple root $\alpha$, with corresponding
reflection $s=s_\alpha$. We first compute $\wt w\wt s$.
If $l(ws)> l(w)$, then $\wt w\wt s$ is the canonical representative of $ws$, and
we have $\wt w\wt s=\wt{ws}$.
Otherwise $w=vs$ for $v\in W$ with $l(v)=l(w)-1$. In this case
$\wt w \wt s = \wt v \wt s^2=\wt vm_\alpha$.
There is a similar formula for $\wt s\wt w$.

Now for $h\in H_0$ we have
\begin{equation}
(\wt w h)\wt s=(\wt w\wt s)h^s,\quad \wt s(\wt wh)=(\wt s\wt w)h
\end{equation}
with $\wt w\wt s$ or $\wt s \wt w$ computed as above.

In addition, for $h_i\in H_0$ we have
\begin{equation}
h_1(\wt wh_2)h_3=\wt w(h_1^wh_2h_3).
\end{equation}

Therefore multiplication in $\wt W$ can be computed from multiplication in $W$,
multiplication in $H_0$, and the action of $W$ on $H_0$.  
\bibliographystyle{plain}
\def\cprime{$'$} \def\cftil#1{\ifmmode\setbox7\hbox{$\accent"5E#1$}\else
  \setbox7\hbox{\accent"5E#1}\penalty 10000\relax\fi\raise 1\ht7
  \hbox{\lower1.15ex\hbox to 1\wd7{\hss\accent"7E\hss}}\penalty 10000
  \hskip-1\wd7\penalty 10000\box7}

\enddocument
\end